\newcommand{\RR}{{\cal{R}}}
\newcommand{\T}{{\cal{T}}}
\renewcommand{\P}{{\cal{P}}}
\newcommand{\eps}{{\varepsilon}}
\newcommand{\pmX}{{X^{\pm 1}}}
\newcommand{\ws}[1]{\mathcal{W}(#1)}
\newcommand{\len}[1]{\left\vert{#1}\right\vert}
{\catcode`\|=\active
  \gdef\Set#1{\left\{\:{\mathcode`\|"8000\let|\SetVert #1}\:\right\}}}
{\catcode`\|=\active
  \gdef\GPres#1{\left\langle\:{\mathcode`\|"8000\let|\SetVert #1}\:\right\rangle}}
\def\SetVert{\egroup\;\middle|\;\bgroup}
\DeclareMathOperator{\Cay}{Cay}
\newtheorem{theorem}{Theorem}
\newtheorem{corollary}[theorem]{Corollary}
\newtheorem{lemma}[theorem]{Lemma}
\newtheorem{proposition}[theorem]{Proposition}
\newtheorem{remark}[theorem]{Remark}
\newtheorem{notation}[theorem]{Notation}
\newtheorem{observation}[theorem]{Observation}
\theoremstyle{definition}
\newtheorem{definition}[theorem]{Definition}
\title{Algebraic $C(4) \& T(4)$ groups are bi-automatic}
\author{Uri Weiss \\ uriw@tx.technion.ac.il}
\begin{document}

\maketitle

\begin{abstract}
S. Gersten and H. Short have proved that if a group has a presentation which satisfies the algebraic $C(4) \& T(4)$ small-cancellation condition then the group is automatic. Their proof contains a gap which we aim to close. To do that we distinguish between algebraic small-cancellation conditions and geometric small cancellation conditions (which are conditions on the van Kampen diagrams). We show that, under certain additional requirements, geometric $C(4) \& T(4)$ small-cancellation conditions imply bi-automaticity. The additional requirements include a restriction on the labels of edges in minimal van Kampen diagrams. This, together with the so-called barycentric sub-division method proves the theorem.
\end{abstract}

\section{Introduction}

A group presentation $\P = \GPres{X|\RR}$ satisfies the \emph{algebraic} $C(4) \& T(4)$ small-cancellation condition if the possible cancellations between the different relators are limited. To be more rigorous we must first define the notion of a piece. A piece in the presentation $\P$ is a word $W$ over the alphabet $X\cup X^{-1}$ such that there are two different cyclic conjugates of relators such that $W$ is a prefix of both of them (see Section \ref{sec:perliminaries} for exact definitions of these notations). We say that the presentation $\P$ satisfies the algebraic $C(4)$ and the $T(4)$ conditions if:
\begin{description}
	\item \textbf{$C(4)$ condition}: If $R$ is a cyclic conjugate of a relator in $\RR$ and $R = P_1 P_2 \cdots P_k$ is a decomposition of $R$ into a product of $k$ pieces (i.e., $P_i$ is a piece for $i=1,2,\ldots,k$) then $k\geq 4$.
	\item \textbf{$T(4)$ condition}: If $R_1$, $R_2$, and $R_3$ are any three cyclic conjugates of relators in $\RR$ then at least one of the following products is freely reduced as written: $R_1R_2$, $R_2R_3$, or $R_3R_1$.
\end{description}
A group satisfies the algebraic $C(4) \& T(4)$ small-cancellation condition if it has a presentation which satisfies these conditions.

In the paper \cite{GS91} of S.\ Gersten and H.\ Short it is proven that all groups which satisfy the algebraic $C(4) \& T(4)$ conditions are automatic. The proof outlined in the paper contains a gap that we intend to close in this work; see the Appendix for a detailed explanation of the problems in the proof given in \cite{GS91}. The main contribution of the current work is a new proof of the following theorem:

\begin{theorem} \label{thm:mainApplicationC4T4}
Groups satisfying the algebraic $C(4) \& T(4)$ condition are bi-automatic.
\end{theorem}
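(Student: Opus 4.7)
My plan mirrors the two-stage strategy advertised in the abstract. First I would prove a geometric intermediate theorem: any presentation satisfying the \emph{geometric} $C(4) \& T(4)$ conditions --- formulated as constraints on the 2-cells and interior vertices of minimal van Kampen diagrams --- together with a prescribed restriction on which labels may appear on edges of such diagrams, presents a bi-automatic group. Second, I would pass from a general algebraic $C(4) \& T(4)$ presentation to one satisfying these geometric hypotheses by applying the barycentric subdivision method. Since the subdivision produces a new presentation of the same group over a new finite generating set, and bi-automaticity is invariant under change of finite generating set, Theorem \ref{thm:mainApplicationC4T4} follows.

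For the geometric step I would construct a bi-automatic structure by taking as the language $L$ the short-lex geodesic representatives over the (subdivided) generating set, and then verify the two-sided fellow traveler property. Given two elements $u, v \in L$ with $u =_G vs$ for a generator $s$, I would fill the loop $u \cdot (vs)^{-1}$ by a minimal van Kampen diagram; the $C(4) \& T(4)$ hypotheses force each interior 2-cell to have at least four pieces on its boundary and each interior vertex to meet at least four 2-cells, and standard curvature-counting arguments then push the diagram into a strip or ladder of bounded transversal width. The label restriction is exactly what upgrades this bounded transversal width into a uniformly bounded alphabet for the rungs of the strip, so that the synchronous difference automaton recognising fellow-travelling pairs has finitely many states; the same argument run on the right yields the second fellow-traveler inequality.

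For the subdivision step I would introduce, for each relator $R = x_{i_1}\cdots x_{i_n}$, new generators that tag positions inside $R$, producing a presentation $\P'$ of the same group whose relators break into canonical short syllables; every edge of a minimal van Kampen diagram over $\P'$ then carries a label from the controlled subset required by the geometric theorem. The algebraic $C(4)$ and $T(4)$ hypotheses on $\P$ transfer to the corresponding geometric hypotheses on $\P'$ through the usual correspondence between algebraic pieces and maximal arcs shared by adjacent 2-cells in reduced diagrams. The main obstacle I anticipate is verifying this transfer rigorously: one must argue that minimality of a van Kampen diagram over $\P'$ is not destroyed by the new generators, so that the geometric $C(4) \& T(4)$ conditions really hold and no hidden piece structure smuggled in by subdivision evades the algebraic hypotheses. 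This is, I suspect, exactly the point where the original Gersten--Short argument is incomplete, and where the technical content of the paper will be concentrated.
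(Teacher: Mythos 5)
Your two-stage architecture (a geometric intermediate theorem plus barycentric subdivision) matches the paper's, and your description of the subdivision step is broadly right, up to two inaccuracies: the subdivided presentation presents $G*F$ for a free group $F$, not $G$ itself, so one must also invoke the fact that bi-automaticity passes between a group and its free factors; and one must first reduce to the case where every generator is a piece. But the geometric step, which is where all the technical content lives, has a genuine gap. You propose taking $L$ to be the short-lex geodesics and deducing the fellow-traveller property from thinness of a minimal equality diagram. This fails precisely because of the phenomenon the paper is written to handle: a minimal diagram between two geodesics can contain a \emph{thick configuration of the second type} --- a two-region subdiagram whose trace on the boundary is a length-three subword $xya$ equal in $G$ to another length-three word $xzb$. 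Such a configuration does not shorten the word, so geodesity (and short-lex minimality) does not exclude it, the diagram is not a strip of bounded width, and the curvature-counting argument does not go through. The paper's fix is to build a different regular order (via Peifer vectors and grading functions) in which the replacement $xya\mapsto xzb$ is strictly decreasing, and this in turn requires proving that the ``stability graph'' is acyclic (Proposition \ref{prop:StabGrphNoCycle}) and that one can choose diagrams in which all ``dominos'' are well-positioned (Proposition \ref{prop:diagDomInTopPos}), since in the geometric setting the same boundary word can bound inequivalent minimal fillings. None of this appears in your proposal, and without it the fellow-traveller verification for your language $L$ cannot be completed.

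Relatedly, you mislocate the gap in Gersten--Short. The transfer of the algebraic conditions to geometric conditions on the subdivided presentation is not the problem; the paper verifies it by a routine (if careful) analysis of interior vertices of valence two and three (Theorem \ref{thm:propOfPTilda}). The actual gap is in the existence of ``allowed representatives'': their defining condition is quantified over \emph{all} diagrams containing the word, and after subdivision a boundary vertex can have valence three in one minimal diagram and valence two in another (Remark \ref{rem:formingInnerVertexOfValance3}), so a local fix made in one diagram can be undone in another. That is exactly the instability your short-lex proposal would also run into.
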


It is a well known fact that a minimal van Kampen diagram (see Section \ref{sec:vanKampen}) over a presentation that satisfies the algebraic $C(4) \& T(4)$ conditions has the property that every inner region has at least four neighbors and there are no inner vertices of valance three. We call such diagrams \emph{$C(4) \& T(4)$ diagrams}. We say that a presentation satisfies the \emph{geometric} $C(4) \& T(4)$ small-cancellation condition if all \emph{minimal} van Kampen diagrams over this presentation are $C(4) \& T(4)$ diagrams. Thus, if a presentation satisfies the algebraic condition it follows that it satisfies the geometric condition (the other direction is not true). Notice that we used ``$C(4) \& T(4)$'' in three different objects (two types of presentations and one type of diagrams). 

The proof of Theorem \ref{thm:mainApplicationC4T4} in \cite{GS91} starts by applying the so-called ``barycentric sub-division'' (which we describe in Section \ref{sec:barySubDiv}). This is also the first step we do. If this procedure is applied to a presentation which satisfies the algebraic $C(4) \& T(4)$ conditions then the result is a presentation which satisfies the \emph{geometric} $C(4) \& T(4)$ small-cancellation condition (and other important properties) but \emph{not} the algebraic conditions. Thus, the technique which is used for algebraic conditions must be generalized to handle the weaker geometric conditions.

Let $G$ be a group having a presentation $\P$ which satisfies the algebraic $C(4) \& T(4)$ condition. S. Gersten and H. Short have shown in \cite{GS90} that if in addition all relators of $\P$ are of length four and all pieces of $\P$ are of length one then the group $G$ is automatic. The additional assumptions on the length of pieces imply that minimal van Kampen diagrams over the presentation $\P$ have the property that a path on the boundary of two regions is labelled by a generator. To complete the proof of Theorem \ref{thm:mainApplicationC4T4} we extend the result from \cite{GS90} to the geometric setup:

\begin{theorem} \label{thm:mainThmC4T4}
Let $G$ be a group and assume that it has a presentation $\P$ such that the following conditions hold:
\begin{enumerate}
	\item All relators of $\P$ are of length four and cyclically reduced.
	\item If $M$ is a \emph{minimal} van Kampen diagram over the presentation $\P$ then:
	\begin{enumerate}
		\item $M$ is a $C(4) \& T(4)$ diagram.
		\item If $\rho$ is a path of $M$ which is on the boundary of two regions then $\rho$ is labelled by a generator.
	\end{enumerate}
\end{enumerate}
Then, $G$ is a bi-automatic group.
\end{theorem}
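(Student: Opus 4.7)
Plan. The goal is to produce a bi-automatic structure for $G$: a regular language $L \subseteq (X\cup X^{-1})^{*}$ that surjects onto $G$ and whose elements enjoy the two-sided fellow-traveller property, i.e.\ there exists $K$ such that whenever $u,v\in L$ satisfy $u\cdot g =_G v$ or $g\cdot u =_G v$ for some $g\in X\cup X^{-1}$, the Cayley-graph paths labelled $u$ and $v$ stay within distance $K$ at every time.

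For the language, I would fix a total order on $X\cup X^{-1}$ and let $L$ be the set of ``greedy'' (ShortLex-type) geodesic representatives of the elements of $G$. Regularity should follow from a bounded look-ahead criterion: since every relator has length four and every path on the boundary of two regions is labelled by a single generator, any local failure of normality is detected inside one length-four region and hence depends only on a constant number of preceding letters. The same local control also gives the basic move that rewrites a non-greedy word into an equivalent greedy one, replacing two adjacent sides of a square region by the opposite two sides.

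The fellow-traveller property is where the geometric hypotheses do the real work. Given $u,v\in L$ with $u\cdot g =_G v$, take a \emph{minimal} van Kampen diagram $M$ with boundary word $ugv^{-1}$; by assumption $M$ is a $C(4)\&T(4)$ diagram and every interior edge is labelled by a single generator, so $M$ genuinely carries the structure of a (non-positively curved) square complex. I would then exploit the standard dual-curve/``band'' decomposition: each square region is crossed by two perpendicular transverse arcs, and the $T(4)$ condition (absence of interior valence-three vertices) prevents two dual curves of the same family from crossing each other. The diagram is thereby foliated by two transverse families of arcs running between boundary edges, which forces $M$ to be thin in a controlled way and the two sides of the boundary to track one another at bounded distance. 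The same argument applied to a minimal diagram with boundary $gu v^{-1}$ produces the left-sided fellow-traveller property, and hence bi-automaticity rather than just automaticity.

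The main obstacle is precisely the discrepancy flagged in the introduction: the $C(4)\&T(4)$ structure is assumed only for \emph{minimal} diagrams, so the band analysis must be carried out on a minimal diagram chosen for each specific pair $(u,v)$, and every geometric claim used --- matching dual curves with letters of $u$ and $v$, controlling how bands enter and leave the boundary, and turning this into a uniform distance bound --- has to be stable under the choice of minimal diagram. A secondary difficulty is proving regularity and geodesicity of $L$ directly from the geometric condition on minimal diagrams, since the cleaner algebraic $C(4)\&T(4)$ hypothesis used in \cite{GS90} is no longer available after barycentric subdivision.
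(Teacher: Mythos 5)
Your plan has the right overall shape (a regular language of normal forms plus thinness of minimal equality diagrams), and you correctly identify the central difficulty---that every claim must be stable under the choice of minimal diagram---but the proposal does not actually resolve that difficulty, and this is precisely where the content of the proof lies. A ShortLex-geodesic language will not work as stated. The local move that must strictly decrease your order is the replacement, along a thick configuration of the second type, of a length-three subword $xya$ by an equally long subword $xzb$ read around two squares; for an arbitrary fixed order on $X^{\pm 1}$ there is no reason this move decreases ShortLex, and worse, the same subword $xya$ can sit in two inequivalent minimal two-square diagrams (this is exactly the failure mode of the Gersten--Short argument described in the appendix). The paper's resolution is to define, for each $x$, a \emph{stability graph} on $X^{\pm1}$ recording which local replacements are forced (``stable dominos''), to prove this graph is acyclic (Proposition \ref{prop:StabGrphNoCycle}, a genuinely nontrivial diagrammatic argument), and to extract from it grading functions $G_x$ and a ``Peifer vector'' order in which the weight of each letter depends on its predecessor. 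One also needs Proposition \ref{prop:diagDomInTopPos}, which shows that a minimal diagram can be modified so that all dominos along the boundary are well-positioned, handling the delicate case of overlapping dominos. None of this is present in, or substitutable by, your greedy-geodesic proposal.

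Two further points. Regularity of your language does not follow from a ``bounded look-ahead criterion'': the set of geodesics (or ShortLex-minimal words) over a presentation is not regular in general, and the paper obtains regularity only through the falsification-by-fellow-traveller technique (Proposition \ref{prop:falseKFT}), which itself requires the refutation machinery above. And your dual-curve argument for thinness, while plausible in spirit, is not what the paper uses and is insufficient as stated: a minimal diagram between two geodesics need not be thin, because a geodesic can still contain a thick configuration of the second type (the replacement preserves length). Thinness is obtained only from Theorem \ref{thm:noThickToThinDiag} \emph{after} one has shown that $\prec$-minimal words admit diagrams with no thick configurations along their boundary, which again circles back to the order and the stability analysis. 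So the gap is not a technicality; the missing piece is the entire mechanism by which local rewrites are made consistent across all minimal diagrams.
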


The main difference between the algebraic conditions and the geometric conditions is that in the geometric setup there may be diagrams which contain vertices of valence three (under the algebraic setup diagrams may never contain vertices of valence three). However, vertices of valence three appear only in \emph{non-minimal diagrams}.

The rest of the paper is organized as follows. In Section \ref{sec:perliminaries} we give the basic notations and definitions. In Section \ref{sec:vanKampen} we give the necessary background on van Kampen diagrams and of $C(4) \& T(4)$ diagrams. In Section \ref{sec:proof} we prove Theorem \ref{thm:mainThmC4T4}. In Section \ref{sec:barySubDiv} we describe the so-called ``barycentric sub-division'' procedure and its properties when applied to an algebraic $C(4) \& T(4)$ presentation. Finally, in the small Section \ref{sec:application} we give the proof of Theorem \ref{thm:mainApplicationC4T4}. An appendix is included which analyze the problems in the original proof of Gersten and Short in \cite{GS91}.

This work is part of the author's Ph.D. research conducted under the supervision of Professor Arye Juh\a'{a}sz.

\section{Perliminaries \label{sec:perliminaries}}

\begin{notation}\label{not:standardNot}
Let $\GPres{X|\RR}$ be a finite presentation for a group $G$ (we will always assume that the elements of $\RR$ are cyclically-reduced). Denote by $\ws{X}$ the set of all finite words with letters in $\pmX=X\cup X^{-1}$. We denote by $\eps$ the empty word. The elements of $\ws{X}$ are not necessarily freely-reduced. Let $W$ and $U$ be words in $\ws{X}$. We use the following notations:
\begin{enumerate}
 \item $\overline{W}$ denotes the element in $G$ which $W$ presents. The projection map $\pi:\ws{X}\to G$ which sends $W$ to $\overline{W}$ is called the \emph{natural map}. We will say that $W=U$ in $G$ if $\overline{W} = \overline{U}$.

 \item $|W|$ is the length of $W$ (i.e., the number of letters in $W$).

 \item $W$ is called \emph{geodesic} in $G$ if for every $U\in\ws{X}$ such that $\overline{W}=\overline{U}$ we have $|W|\leq|U|$.

 \item $W(n)$ is the prefix of $W$ consisting of the first $n$ letters of $W$. If $n>|W|$ then $W(n)=W$.

 \item The \emph{symmetric closure} of $\RR$ is the finite subset of $\ws{X}$ which consists of all cyclic conjugates of elements of $\RR$ and their inverses. If $\RR$ is equal to its symmetric closure then we say that $\RR$ is \emph{symmetrically closed}.

 \item $P\in\ws{X}$ is an \emph{$\RR$-piece} (or simply a \emph{piece} if $\RR$ is fixed in the context) if there are $W_1,W_2\in\ws{X}$ such that $W_1\neq W_2$ and $PW_1,PW_2$ are relators in the symmetric closure of $\RR$ (and $PW_1,PW_2$ are reduced as written).

\end{enumerate}
\end{notation}

\begin{definition}[Cayley Graph and the associated metric]
The Cayley graph of a group $G$ with generating set $X$ is the graph whose vertex set is $G$ and there is a directed edge from $g_1$ to $g_2$ labelled by $x\in\pmX$ if and only if $g_1x=g_2$. We denote this graph by $\Cay(G,X)$. The metric $d(\cdot,\cdot)=d_X(\cdot,\cdot)$ in $\Cay(G,X)$ is the standard non-directed \emph{path metric} (also called the \emph{word-metric} of $G$). Namely, $d(g_1,g_2)$ is the edge length of the shortest path from $g_1$ to $g_2$. Each word $W=x_1x_2\cdots x_k$ in $\ws{X}$ corresponds naturally to a path in $\Cay(G,X)$ whose vertices are $1, \overline{x_1}, \overline{x_1x_2}, \overline{x_1x_2x_3}, \ldots, \overline{W}$. For two words $W,U\in\ws{X}$ we denote by $d_X(W,U)$ the distance between $\overline{W}$ to $\overline{U}$ in $\Cay(G,X)$.
\end{definition}

\begin{definition}[Fellow Travellers \cite{EPS92}] \label{def:FellowTraveller}
Let $k$ be a positive number. Two words $W,U\in \ws{X}$ are called \emph{$k$-fellow-travelers} if for all $\ell\in\mathbb{N}$ we have:
\[
d_X\left(W(\ell),U(\ell)\right)\leq k
\]
Suppose $L\subseteq\ws{X}$. $L$ has the \emph{fellow-travelers property} if there is a constant $k$ for which the following condition holds: if $W$ and $U$ are elements of $L$ and $x,y\in\pmX\cup\Set{\varepsilon}$ such that $xW=Uy$ in $G$ then $xW$ and $Uy$ are $k$-fellow-travelers.
\end{definition}

Note that if $W=UV$ in $G$ then $d(W,U) \leq |V|$. Next, the definition of bi-automatic groups.

\begin{definition}[Bi-automatic structure and Bi-automatic group \cite{EPS92}] \label{def:biAuto}
A \emph{bi-automatic structure} of $G$ with generating set $X$ is a regular \cite{HU79} subset of $\ws{X}$ which maps onto $G$ under the natural map and which has the fellow-traveler property. A group is \emph{bi-automatic} if it has a bi-automatic structure.
\end{definition}

Bi-automaticity is a property which does not depend on the generating set \cite[Thm. 2.4.1]{EPS92}. As evident from the definition above, fellow-traveling property plays an important role when one tries to establish bi-automaticity of a groups. The following observation will be useful for checking that property.

\begin{observation} \label{obs:CheckingFeloTrvlProp}
Let $G$ be a group finitely generated by $X$ and let $W$ and $W'$ be two elements of $\ws{X}$. Suppose $W$ and $W'$ decompose as $W=V_1 U V_2$ and $W' = V_1 U' V_2$. In this case, $W$ and $W'$ are $(|U|+|U'|)$-fellow-travelers.
\end{observation}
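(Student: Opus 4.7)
The plan is to establish $d_X(W(\ell), W'(\ell)) \leq |U| + |U'|$ for every $\ell \in \mathbb{N}$ by a direct case analysis on where $\ell$ sits relative to the natural breakpoints. Set $a := |V_1|$, $b := |U|$, $b' := |U'|$, and $c := |V_2|$; the breakpoints are $a$, $a+b$, $a+b'$, $|W| = a+b+c$, and $|W'| = a+b'+c$. In each region I would write $W(\ell)$ and $W'(\ell)$ explicitly as concatenations of pieces of $V_1, U, U', V_2$, and then exhibit a word of length at most $b+b'$ representing the group element $\overline{W(\ell)}^{-1}\overline{W'(\ell)}$, repeatedly using the remark preceding the observation that $W = UV$ in $G$ implies $d(W, U) \leq |V|$.

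The easy regions are $\ell \leq a$, where the two prefixes coincide and the distance is $0$, and the ``parallel middle'' $a < \ell \leq a + \min(b, b')$, where $W(\ell) = V_1 \cdot U(\ell - a)$ and $W'(\ell) = V_1 \cdot U'(\ell - a)$. In the latter, the word $U(\ell - a)^{-1} U'(\ell - a)$ represents $\overline{W(\ell)}^{-1}\overline{W'(\ell)}$ and has length $2(\ell - a) \leq 2 \min(b, b') \leq b + b'$.

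The remaining and most substantial regions are those in which one or both of $W(\ell), W'(\ell)$ have already entered the common tail $V_2$. In the ``both-in-$V_2$'' case one writes $W(\ell) = V_1 U V_2(j)$ and $W'(\ell) = V_1 U' V_2(j')$ for the appropriate $j, j'$, forms the product $V_2(j)^{-1} U^{-1} U' V_2(j')$ as a representative of $\overline{W(\ell)}^{-1}\overline{W'(\ell)}$, and cancels the shared $V_2$-factors on the two sides; in the mixed case (one prefix still inside $U$ or $U'$, the other already inside $V_2$) one does an analogous computation with a truncation of $U$ or $U'$ replacing one of the $V_2$-prefixes. The main obstacle will be the careful bookkeeping in these asymmetric regions: one needs that the $V_2$-pieces on either side align up to a length-difference factor and that the central $U^{-1} U'$ collapses appropriately (in the setting in which the observation is to be applied, the two words represent the same element of $G$, so this factor is trivial), leaving a residual word of length at most $|b - b'| \leq b + b'$. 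Putting the case estimates together yields the $(|U|+|U'|)$-fellow-traveling.
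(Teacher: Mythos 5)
The paper states this as an unproved observation, so there is no in-text argument to compare yours against; your case analysis by the position of $\ell$ relative to the breakpoints is the natural way to supply one, and the easy regions ($\ell\le|V_1|$ and the parallel middle, where the bound $2(\ell-|V_1|)\le 2\min(|U|,|U'|)$ is correct) are fine. The genuine problem is exactly the one you flag in parentheses: the collapse of the central factor $U^{-1}U'$ in the both-in-$V_2$ and mixed regions is only available if $\overline{U}=\overline{U'}$, and this is \emph{not} a hypothesis of the observation. Without it the statement is false, not merely harder to prove: take $G$ free on $\{x,y\}$, $V_1=\eps$, $U=x$, $U'=y$, $V_2=x^n$; then at $\ell=n+1$ one has $d_X(W(\ell),W'(\ell))=|x^{-n-1}yx^n|=2n+2$, which exceeds $|U|+|U'|=2$ for every $n\ge1$. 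In general the residual element $\overline{V_2(j)}^{-1}\,\overline{U}^{-1}\overline{U'}\,\overline{V_2(j')}$ is, up to a short prefix, a conjugate of $\overline{U}^{-1}\overline{U'}$ by a prefix of $V_2$, and its length cannot be bounded independently of $|V_2|$ unless that element is trivial.

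The repair is to add the hypothesis that $U=U'$ in $G$ (equivalently $W=W'$ in $G$); this holds in both places the paper invokes Observation \ref{obs:CheckingFeloTrvlProp} (in Lemma \ref{lem:shortableRefute} one has $V=W_2$ in $G$, and in Proposition \ref{prop:thickConfToRefute} one has $ya=zb$ in $G$), so the applications are unaffected. With that hypothesis your outline closes cleanly: in the tail and mixed regions travel from $W'(\ell)$ forward to $V_1U'$, switch to $V_1U$ at no cost, and travel forward into $V_2$, giving distance at most $\bigl||U|-|U'|\bigr|$; together with the middle-region bound everything is at most $|U|+|U'|$. You should promote the parenthetical to an explicit hypothesis, since as written your argument proves a (correct) statement different from the one asserted.
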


We next describe a technique known as `falsification by fellow-traveller' due to Davis and Shapiro \cite{DS91}. This technique was used several times to prove bi-automaticity of different groups (see, for example, \cite{Pei96, Wei07}). We start with a technical definition of a function that combine two words into a single word. This will allows us to define the idea of regular orders.

\begin{definition} \label{def:deltaX}
Let $\Sigma$ be a set not containing $\$$. Denote by $\Sigma(2,\$)$ the set $\Sigma\cup\Set{\$}\times \Sigma\cup\Set{\$} \setminus \Set{(\$,\$)}$. The map $\delta_\Sigma$ is the map $\delta_\Sigma:\Sigma^*\times \Sigma^* \to \Sigma(2,\$)^*$ which is defined as follows. Let $(W,U) \in \Sigma^*\times \Sigma^*$ where $W=x_1\,x_2\,\cdots\,x_n$ and $U=y_1\,y_2\,\cdots y_m$. Then,
\[
\delta_\Sigma(W,U) = \left\{
\begin{array}{ll}
  (x_1,y_1)\cdots(x_n,y_n)(\$,y_{n+1})\ldots(\$,y_{m}) & n<m \\
  (x_1,y_1)\cdots(x_m,y_m)(x_{m+1},\$)\ldots(x_{n},\$) & m<n \\
  (x_1,y_1)\cdots(x_n,y_m) & m=n
\end{array}
\right.
\]
\end{definition}

\begin{definition} \label{def:regOrder}
A regular order on a set of words $\Sigma^*$ is an order $S \subseteq \Sigma^*\times \Sigma^*$ such that $\delta_\Sigma(S)$ is a regular language.
\end{definition}

Next, the `falsification by fellow-traveller' technique.

\begin{proposition} [Falsification by fellow-traveller; Lemma 29 of \cite{Pei96}] \label{prop:falseKFT}
Let $G$ be finitely generated by $X$ and let ``$\prec$" be a regular order on $\ws{X}$ that has minimal element for every non-empty subset of $\ws{X}$. Assume there is a positive constant $k$, such that for every word $W$ that is not ``$\prec$"-minimal there is a word $V$ with the following properties:
\begin{enumerate}
 \item $V\prec W$.
 \item $V=W$ in $G$.
 \item $V$ and $W$ are $k$-fellow-travelers.
\end{enumerate}
Then, the set of ``$\prec$"-minimal words is a regular set and maps onto $G$ through the natural map.
\end{proposition}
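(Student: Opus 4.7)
The proposition has two claims, surjectivity of $L_{\min} \to G$ and regularity of $L_{\min}$, where $L_{\min}$ denotes the set of $\prec$-minimal words. \emph{Surjectivity} is immediate from well-foundedness: for each $g \in G$ the fibre $\pi^{-1}(g) \subseteq \ws{X}$ is non-empty and hence admits a $\prec$-minimal element $W_g$; such a $W_g$ is $\prec$-minimal in all of $\ws{X}$, since any $\prec$-smaller representative of $g$ would contradict minimality in the fibre.

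For \emph{regularity} the plan is to use the falsification hypothesis to encode non-minimality as a local property. Consider the relation
\[
L = \Set{ (V,W) \in \ws{X} \times \ws{X} | V \prec W, \; \overline{V} = \overline{W}, \text{ and } V, W \text{ are } k\text{-fellow-travelers} }.
\]
The hypothesis, together with its trivial converse (existence of any $\prec$-smaller $V$ with $\overline{V} = \overline{W}$ already falsifies minimality), yields that $W$ is $\prec$-minimal iff $W \notin \pi_2(L)$, where $\pi_2$ denotes projection onto the second coordinate. Since regular languages are closed under projection and complementation, it suffices to prove that $\delta_X(L) \subseteq \pmX(2,\$)^*$ is a regular language.

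I would present $\delta_X(L)$ as the intersection of two regular languages. The first, $\delta_X$ applied to the graph of ``$\prec$'', is regular by the hypothesis that $\prec$ is a regular order. The second encodes the conjunction ``$V, W$ are $k$-fellow-travelers and $\overline{V} = \overline{W}$'', and is recognised by a finite-state automaton whose states are the elements of the ball $B_k \subseteq G$ of radius $k$ about the identity in $\Cay(G,X)$, together with a reject state. The state after reading the first $\ell$ symbol-pairs is designed to equal $\overline{V(\ell)}^{-1} \overline{W(\ell)}$; the $k$-fellow-traveler condition keeps this element inside $B_k$, which is finite, and the automaton accepts iff the final state is the identity, i.e., $\overline{V} = \overline{W}$. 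The one technical point is the $\$$-padded tail that appears when $|V| \neq |W|$: a $(\$, y)$ transition updates only the $W$-side of the tracked state, and analogously for $(x, \$)$; a routine check then shows acceptance is equivalent to the two required properties. I expect this bookkeeping to be the main step requiring care but not a genuine obstacle; once $\delta_X(L)$ is known to be regular, the stated closure properties immediately give regularity of $L_{\min}$.
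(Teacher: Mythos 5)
Your proof is correct. The paper itself does not prove this proposition --- it is quoted verbatim as Lemma 29 of \cite{Pei96} --- and your argument (characterising non-minimality via the relation $L$, recognising the padded language with the standard word-difference automaton over the ball $B_k$, and then using closure of regular languages under intersection, projection/homomorphism, and complementation) is exactly the standard proof found in \cite{Pei96} and \cite{EPS92}, so there is nothing to compare against in the text.
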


Proposition \ref{prop:falseKFT} will be used to prove regularity of a bi-automatic structure (this is required in Definition \ref{def:biAuto}). Lemma \ref{lem:husdDist} below is used to establish the fellow-travelling property.
 
\begin{lemma} [Lemma 3.2.3 of \cite{EPS92}] \label{lem:husdDist}
Let $G$ be finitely presented by $\GPres{X|\RR}$, let $W,U\in\ws{X}$ be two geodesics, and let $x,y\in\pmX\cup\Set{\eps}$. Assume that $xW = Uy$ in $G$ and that there is a positive constant $s\in\mathbb{N}$ such that for every prefix $P_1$ of $xW$ there is a prefix $P_2$ of $Uy$ and $V\in\ws{X}$ with $\len{V}\leq s$ for which $P_1 V=P_2$ in $G$. Assume further that the same holds when the roles of $xW$ and $Uy$ are exchanged. Then, $xW$ and $Uy$ are $(2s+1)$-fellow-travelers.
\end{lemma}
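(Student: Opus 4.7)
The plan is to verify directly from Definition \ref{def:FellowTraveller} that $d_X\bigl(xW(\ell),Uy(\ell)\bigr)\leq 2s+1$ for every $\ell\in\mathbb{N}$. Fix $\ell$ and set $P_1 := xW(\ell)$. The hypothesis produces a prefix $P_2 = Uy(m)$ of $Uy$ and a word $V$ with $|V|\leq s$ such that $P_1V=P_2$ in $G$; by the remark ``if $W=UV$ in $G$ then $d(W,U)\leq |V|$'' recorded just before Definition \ref{def:biAuto}, we obtain $d_X(P_1,P_2)\leq s$. Since $P_2=Uy(m)$ and $Uy(\ell)$ both lie on the path traced by $Uy$ in $\Cay(G,X)$, we also have $d_X(P_2,Uy(\ell))\leq |m-\ell|$. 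The triangle inequality then reduces the whole statement to the index-synchronization estimate
\[
|m-\ell|\leq s+1.
\]

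To establish this inequality I would exploit that $W$ and $U$ are geodesics. Because $W$ is geodesic, every prefix $W(k)$ is again a geodesic, hence $d_X(1,W(k))=k$; prepending the single letter $x$ shifts $d_X(1,\cdot)$ by a small additive constant, so $d_X(1,xW(\ell))$ differs from $\ell$ by at most a constant, and the analogous statement holds for $Uy(m)$. Combining these with the triangle inequality $d_X(1,P_1)\leq d_X(1,P_2)+s$ and its mirror immediately gives a first bound $|m-\ell|\leq s+O(1)$. A more careful bookkeeping --- invoking the \emph{symmetric} form of the hypothesis on the prefix $Uy(\ell)$ to extract a nearby prefix $xW(\ell')$, and using that $d_X(1,\cdot)$ is strictly monotone along geodesic prefixes --- sharpens the bound to the required $|m-\ell|\leq s+1$. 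The triangle inequality then yields $d_X\bigl(xW(\ell),Uy(\ell)\bigr)\leq s+(s+1)=2s+1$, as claimed.

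The main obstacle is the index-synchronization step. The hypothesis of the lemma only asserts a Hausdorff-type bound --- each prefix of one word is close to \emph{some} prefix of the other --- whereas the fellow-traveller conclusion requires closeness of prefixes of the \emph{same} length. Translating Hausdorff-closeness into synchronous-closeness relies on tying the index along each word to the distance in $G$ from the identity, which is precisely what the geodesic assumption provides; and the two-sided form of the hypothesis (exchanging the roles of $xW$ and $Uy$) is what allows us to control the synchronization error in both directions and extract the sharp constant $2s+1$.
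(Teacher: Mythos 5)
The paper never proves this lemma at all: it is imported verbatim as Lemma 3.2.3 of \cite{EPS92}, so your attempt can only be judged on its own terms. Your overall strategy is the standard (and correct) one for passing from Hausdorff‐type closeness to synchronous closeness: fix $\ell$, use the hypothesis to produce $P_2=Uy(m)$ with $d_X(P_1,P_2)\leq s$, bound $d_X\bigl(P_2,Uy(\ell)\bigr)$ by $|m-\ell|$, and control $|m-\ell|$ by comparing $d_X(1,\cdot)$ along the two words via the geodesic assumption.

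The gap is exactly where you flag it: the ``more careful bookkeeping'' that is supposed to upgrade $|m-\ell|\leq s+O(1)$ to $|m-\ell|\leq s+1$ is never carried out, and that step is the entire content of the lemma. What the straightforward bookkeeping actually yields is $|m-\ell|\leq s+2$, hence only $2s+2$: since $W$ is geodesic one gets $d_X\bigl(1,(xW)(\ell)\bigr)\in[\ell-2,\ell]$ (the single letter $x$ can cost one unit in each direction of the comparison with $\ell$), and likewise $d_X\bigl(1,(Uy)(m)\bigr)\in[m-2,m]$ when $m=|U|+1$; feeding these into $\bigl|d_X(1,P_1)-d_X(1,P_2)\bigr|\leq s$ gives $|m-\ell|\leq s+2$ and nothing better. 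The two fixes you gesture at do not obviously close this: applying the symmetric hypothesis to the prefix $Uy(\ell)$ runs into the identical $\pm 2$ slack (one unit from $x$, one from the possibility that the matched prefix is the full word $xW$) and again delivers $2s+2$, and ``strict monotonicity of $d_X(1,\cdot)$ along geodesic prefixes'' is already what produces the weaker bound. To honestly obtain $2s+1$ you would need a genuine case analysis showing that the two unit losses (one attached to the letter $x$ at the start, one to the letter $y$ at the end) cannot both be charged against the same index $\ell$ --- or you should do what the paper does and simply cite \cite{EPS92}. For the application in this paper the difference between $2s+1$ and $2s+2$ is harmless, but as a proof of the stated constant your argument is incomplete.
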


\begin{remark}
In the terminology of \cite{EPS92}, the words $xW$ and $Uy$ in Lemma \ref{lem:husdDist} are of $s$-Hausdorff distance (see \cite{EPS92}).
\end{remark}

\section{van Kampen diagrams \label{sec:vanKampen}}

One of the basic tools of small cancellation theory is van Kampen diagrams. We next give the usual definitions and notations taken mainly from \cite[Chapter V]{LS77}. Lengths of paths in a van Kampen diagram may be used to estimate distances in the Cayley graph. This is useful when trying to establish the fellow-traveler property.

\begin{definition}
A \emph{map} $M$ is a finite planar connected 2-complex (see \cite[Chapter V]{LS77}). We use the common convention and refer to the $0$-cells, $1$-cells, and $2$-cells of $M$ as \emph{vertices}, \emph{edges}, and \emph{regions}, respectively.
\end{definition}

All maps are assumed to be connected and simply connected unless we note otherwise. Vertices of valence one or two are allowed. Regions are open subset of the plane which are homeomorphic to open disk and edges are images of open interval. A \emph{sub-map} of $M$ is a map whose vertices, edges, and regions are also regions, edges, and regions of $M$, respectively. Each edge $e$ of $M$ is equipped with an orientation (i.e., a specific choice of beginning and end) and we denote by $e^{-1}$ the same edge but with reversed orientation; $i(e)$ will denote the beginning vertex of $e$ and $t(e)$ will denote the ending vertex of $e$. A \emph{path} in a map $M$ is a sequence of edges $e_1,e_2,\ldots,e_k$ such that $t(e_i)=i(e_{i+1})$ for all $i=1,2,\ldots,k-1$. Given a path $\delta=e_1 e_2 \cdots e_k$ we define $i(\delta)$ to be $i(e_1)$ and $t(\delta)$ to be $t(e_k)$; we denote by $\delta^{-1}$ the reversed path, namely, $e_k^{-1} e_{k-1}^{-1} \cdots e_1^{-1}$. For a path $\delta$ we denote by $|\delta|$ the length of $\delta$ which is the number of edges it contains. The case where a path $\rho$ has length zero is allowed and in this case $\rho$ is a single vertex (which is the initial and terminal vertex of $\rho$). If $\mu$ is the path $e_1 \cdots e_r$ and $\rho$ is the path $e_{r+1} \cdots e_s$ such that $t(\mu) = i(\rho)$ then the concatenation $\mu\rho$ of $\mu$ and $\rho$ is defined and is the path $e_1\cdots e_s$. If $\mu = \mu_1 \mu_2 \mu_3$ then $\mu_1$ is a prefix of $\mu$, $\mu_2$ a sub-path of $\mu$, and $\mu_3$ a suffix of $\mu$. A \emph{spike} is a vertex of valence one in $M$. A \emph{boundary path} of a map $M$ is a path that is contained in $\partial M$; a \emph{boundary cycle} is a closed simple boundary path. The term \emph{neighbors}, when referred to two regions, means that the intersection of the regions' boundaries contains an edge; specifically, if the intersection contains only vertices, or is empty, then the two regions are not neighbors. \emph{Boundary regions} are regions with outer boundary, i.e., the intersection of their boundary and the map's boundary contains at least one edge. Regions which are not boundary regions are called \emph{inner regions}. \emph{Boundary edges} and \emph{boundary vertices} are edges and vertices on the boundary of the map. \emph{Inner edges} and \emph{inner vertices} are edges and vertices not on the boundary of the map.

We next turn to van Kampen diagrams. These are maps with a specific choice of labeling on the their edges.

\begin{definition} \label{def:vanKampenDiagram}
Let $M$ be a map. A \emph{labeling function} on $M$ with labels in group $F$ is a function $\Phi$ defined on the set of edges of $M$ and which sends each edge to a non-identity element of $F$ such that $\Phi(e^{-1})=\Phi(e)^{-1}$. We naturally extends $\Phi$ to paths in the $1$-skeleton of $M$ by sending a path $e_1 e_2 \cdots e_k$ to $\Phi(e_1) \Phi(e_2) \cdots \Phi(e_k)$. Given a finite presentation $\P=\GPres{X|\RR}$, an \emph{$\RR$-diagram} is a map $M$ together with a labeling function $\Phi$ such that $\Phi(e)\in\ws{X}$ and the images of boundary cycles of regions are elements of the symmetric closure of $\RR$; the map $M$ is the underlying map of the diagram. $\RR$-diagrams will be also referred to as diagrams over the presentation $\P$, \emph{van Kampen diagram}, and sometimes as just diagram if the set $\RR$ or the presentation $\P$ are known.
\end{definition}

We say that two diagrams $M_1$ and $M_2$ are \emph{equivalent} if there is an isomorphism between the labelled 1-skeletons of $M_1$ and $M_2$ which can be extended to the regions of $M_1$ and $M_2$. Essentially, two diagrams are equivalent is they are equal up to homeomorphism of complexes.

Suppose we are given a group $G$ with presentation $\GPres{X|\RR}$. van Kampen theorem \cite[Chap. V]{LS77} states that a word $W\in\ws{X}$ presents the identity of $G$ if and only if there is an $\RR$-diagram with a boundary cycle labelled by $W$. A van Kampen diagram with boundary label $W$ is called \emph{minimal} if it has the minimal number of regions out of all the $\RR$-diagrams with boundary cycle labelled by $W$. Note that a diagram is minimal then every sub-diagram of $M$ is minimal. A van Kampen diagram is \emph{reduced} if for every two neighboring regions $D_1$ and $D_2$ such that $\partial D_1 = \mu \rho$ and $\partial D_2 = \rho^{-1}\sigma$  we have that the label of $\mu\sigma$ cannot be freely reduced to $1$. A diagram which is not reduced is not minimal since we can remove regions from it without changing the boundary label (i.e., it has a non-minimal sub-diagram). Thus, a minimal diagram is reduced (but, reduced diagrams may be non-minimal).

A map $M$ with boundary cycle $\delta\mu^{-1}$ is \emph{$(\delta,\mu)$-thin} if every region $D$ has at most two neighbors and its boundary $\partial D$ intersects both $\delta$ and $\mu$. A map is \emph{thin} if it is $(\delta,\mu)$-thin for some boundary paths $\delta$ and $\mu$ and a diagram is thin if its underling map is thin. See Figure \ref{fig:thinEquaDiag} for an illustration of a thin map. If two elements $W$ and $U$ present the same element in a group $G$ then $WU^{-1}=1$ in $G$ and so by the van Kampen theorem there is a van Kampen diagram $M$ with boundary label $WU^{-1}$. We call such diagram \emph{equality diagram for $W$ and $U$}. If in addition the diagram is $(\delta,\mu)$-thin and the labels of $\delta$ and $\mu$ are $W$ and $U$, respectively, then we say that $M$ is a \emph{thin equality diagram for $W$ and $U$}.

\begin{figure}[ht]
\centering
\includegraphics[totalheight=0.17\textheight]{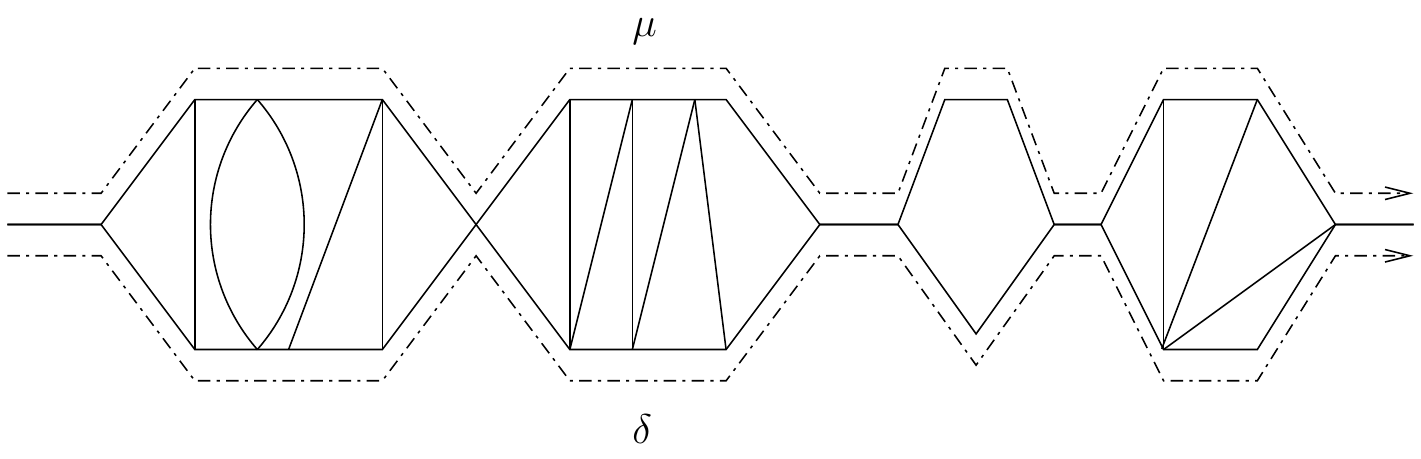}
\caption{An illustration of thin map}\label{fig:thinEquaDiag}
\end{figure}

We next give two lemmas which connect the notion of thin diagrams to the results we need in this work. The first is Lemma 21 from \cite{Pei96}.

\begin{lemma} \label{lem:thinDiagForKFT}
Let $G$ be a group which is finitely presented by $\GPres{X|\RR}$ and let $k$ be the length of the longest relator in $\RR$. Suppose $W$ and $U$ are elements of $\ws{X}$ such that $W=U$ in $G$. Suppose further that $M$ is a $(\delta,\mu)$-thin diagram where $\delta$ is labelled by $W$ and $\mu$ is labelled by $U$. Then, then one of the following holds:
\begin{enumerate}
 \item $W$ and $U$ are $k$-fellow-travellers.
 \item There is a word $W'$ such that $W'$ and $W$ are $k$-fellow-travellers, $|W'|<|W|$ and $W'=W$ in $G$.
 \item There is a word $U'$ such that $U'$ and $U$ are $k$-fellow-travellers, $|U'|<|U|$ and $U'=U$ in $G$.
\end{enumerate}
\end{lemma}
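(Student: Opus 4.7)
The plan is to exploit the layered structure of a thin diagram. Enumerate the regions $D_1, D_2, \ldots, D_n$ of $M$ in the order in which they appear along the strip, so that consecutive regions $D_i$ and $D_{i+1}$ share a ``vertical'' boundary arc $\beta_i$ running from $\delta$ to $\mu$; let $\beta_0$ and $\beta_n$ be the (possibly degenerate) arcs at the two common endpoints of $\delta$ and $\mu$. Decompose $\delta = \alpha_1 \alpha_2 \cdots \alpha_n$ and $\mu = \gamma_1 \gamma_2 \cdots \gamma_n$ with $\alpha_i, \gamma_i \subseteq \partial D_i$, so that the cyclic boundary of $D_i$ reads $\alpha_i \beta_i \gamma_i^{-1} \beta_{i-1}^{-1}$, a relator of length at most $k$. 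In particular, $|\alpha_i| + |\beta_{i-1}| + |\gamma_i| + |\beta_i| \leq k$ for every $i$, and reading around $\partial D_i$ yields $\alpha_i = \beta_{i-1}\gamma_i\beta_i^{-1}$ in $G$.

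I would then split into three cases. If some index $i$ satisfies $|\alpha_i| > |\beta_{i-1}| + |\gamma_i| + |\beta_i|$, replace the factor $\alpha_i$ of $\delta$ by $\beta_{i-1}\gamma_i\beta_i^{-1}$ to produce a word $W'$ with $W' = W$ in $G$ and $|W'| < |W|$. Since $W$ and $W'$ agree outside a single window of total length $|\alpha_i| + |\beta_{i-1}| + |\gamma_i| + |\beta_i| \leq k$, Observation \ref{obs:CheckingFeloTrvlProp} yields that $W$ and $W'$ are $k$-fellow-travelers, establishing conclusion (2). The symmetric inequality on some $|\gamma_i|$ yields conclusion (3) by the same construction applied to $\mu$.

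In the remaining case, $|\alpha_i| \leq k/2$ and $|\gamma_i| \leq k/2$ for every $i$. I would derive conclusion (1) directly. Given a prefix $W(\ell)$ lying on $\alpha_i$, the corresponding $U(\ell)$ lies on some $\gamma_j$ (or at the endpoint of $\mu$ if $\ell > |U|$); the balance conditions force $j$ to be close to $i$, and joining $W(\ell)$ to $U(\ell)$ via the intermediate vertical arcs $\beta_m$, each of length at most $k/2$, gives $d(W(\ell),U(\ell)) \leq k$. A cleaner variant would observe that the structure directly provides the Hausdorff-distance hypothesis of Lemma \ref{lem:husdDist}, so if $W$ and $U$ happen to be geodesic one can conclude via that lemma.

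The main obstacle will be producing the fellow-travel constant exactly equal to $k$ in the third case. A direct appeal to Lemma \ref{lem:husdDist} would give $2s+1$ with $s \approx k/2$, which is $k+1$ rather than $k$; the tight bound must come from a same-index argument that uses the two balance conditions $|\alpha_i|, |\gamma_i| \leq k/2$ simultaneously and tracks the drift $|\alpha_1\cdots\alpha_i| - |\gamma_1\cdots\gamma_i|$ across consecutive regions. A secondary subtlety is dealing with the degenerate endpoint arcs $\beta_0, \beta_n$ and with regions that touch $\delta$ or $\mu$ only at a vertex, forcing some of the $\alpha_i$ or $\gamma_i$ to be interpreted as empty paths in the enumeration.
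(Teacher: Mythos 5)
The paper offers no proof of this statement --- it is imported verbatim as Lemma~21 of \cite{Pei96} --- so your attempt has to be judged on its own terms rather than against an in-paper argument. Your first two cases are correct and sharp: if some region $D_i$ has $|\alpha_i| > |\beta_{i-1}|+|\gamma_i|+|\beta_i|$, splicing the label of $\beta_{i-1}\gamma_i\beta_i^{-1}$ in place of that of $\alpha_i$ shortens $W$, and Observation~\ref{obs:CheckingFeloTrvlProp} gives the constant $|\alpha_i|+|\beta_{i-1}|+|\gamma_i|+|\beta_i|\leq k$, which is exactly conclusion~(2); conclusion~(3) is symmetric, and your treatment of the degenerate end arcs $\beta_0,\beta_n$ is what makes the two end regions genuinely usable here.

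The gap is in your third case, and it is more serious than the ``$k$ versus $2s+1$'' issue you flag. Failing cases (1) and (2) yields only the per-region bounds $\bigl||\alpha_i|-|\gamma_i|\bigr|\leq|\beta_{i-1}|+|\beta_i|$, and these differences \emph{accumulate} along the strip: with $s_i=|\alpha_1\cdots\alpha_i|$ and $t_i=|\gamma_1\cdots\gamma_i|$, nothing stops $s_i-t_i$ from growing linearly in $i$. Concretely, a ``staircase'' strip with $k=8$, interior regions having $|\alpha_i|=4$, $|\gamma_i|=2$, $|\beta_{i-1}|=|\beta_i|=1$, and balanced end regions (say $|\alpha_1|=2$, $|\gamma_1|=|\beta_1|=1$) satisfies every one of your case-3 inequalities, yet $|W|\approx 2|U|$; for $\ell$ between $|U|$ and $|W|$ the vertices $W(\ell)$ and $U(\ell)$ lie over regions whose index difference is unbounded, and the only paths joining them inside the diagram have unbounded length, so conclusion~(1) fails while neither single-region replacement is available. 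In other words your trichotomy is not exhaustive: the claim that ``the balance conditions force $j$ to be close to $i$'' is exactly what breaks. A correct argument has to detect and discharge the cumulative offset --- e.g.\ by an induction along the strip that triggers a shortening move spanning several consecutive regions (replacing $\alpha_i\cdots\alpha_j$ by $\beta_{i-1}\gamma_i\cdots\gamma_j\beta_j^{-1}$) as soon as the offset becomes nonzero, together with a finer fellow-traveller estimate than Observation~\ref{obs:CheckingFeloTrvlProp} for such multi-region windows --- rather than a region-by-region balance check.
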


The second lemma show when the conditions of Lemma \ref{lem:husdDist} are satisfied.

\begin{lemma} \label{lem:thinDiagToHaudDist}
Let $G$ be a group which is finitely presented by $\GPres{X|\RR}$. Suppose that $W$ and $U$ are geodesic elements of $\ws{X}$ and $x,y \in \pmX \cup \Set{\eps}$ such that $xW = Uy$ in $G$. Assume that $M$ is a $(\delta,\mu)$-thin diagram where $\delta$ is labelled by $xW$ and $\mu$ is labelled by $Uy$ and let $s$ be the maximal length of a relator in $\RR$. In this case the conditions of Lemma \ref{lem:husdDist} hold for the given $s$.
\end{lemma}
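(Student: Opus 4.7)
The plan is to exploit the linear ``ladder'' arrangement of regions guaranteed by thinness: for every prefix endpoint on $\delta$ I will produce a corresponding prefix endpoint on $\mu$ joined to it by a path of length at most $s$ that runs along the boundary of a single region. As a preliminary step I would enumerate the regions $D_1,D_2,\ldots,D_n$ linearly so that consecutive regions are neighbors, non-consecutive regions are not, and each $\partial D_i$ meets both $\delta$ and $\mu$ in (at least) an edge. Such an enumeration exists because $M$ is $(\delta,\mu)$-thin and simply connected, and because $\delta\mu^{-1}$ is a boundary cycle we additionally have $i(\delta)=i(\mu)$ and $t(\delta)=t(\mu)$.

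Given a prefix $P_1$ of $xW$ ending at a vertex $v$ on $\delta$, I would handle the endpoint cases trivially: if $v=i(\delta)$ or $v=t(\delta)$ then $v$ lies on $\mu$ as well, so the corresponding prefix $P_2$ of $Uy$ ending at $v$ works with $V=\eps$. Otherwise $v$ lies on $\partial D_i$ for some $i$ (either choice is fine when $v\in\partial D_i\cap\partial D_{i+1}$). Thinness provides a vertex $v'\in\partial D_i\cap\mu$, and I would take $V$ to be the label of a shortest boundary path of $D_i$ from $v$ to $v'$, so that $|V|\leq\lfloor|\partial D_i|/2\rfloor\leq s$. Letting $P_2$ be the prefix of $Uy$ terminating at $v'$, the concatenation of the prefix of $\delta$ from $i(\delta)$ to $v$, the chosen boundary path across $D_i$, and the reversal of the prefix of $\mu$ from $i(\mu)$ to $v'$ is a closed path in the $1$-skeleton of $M$. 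Its label $P_1VP_2^{-1}$ therefore lies in the normal closure of $\RR$, i.e.\ is trivial in $G$, so $P_1V=P_2$ in $G$. The same argument with $\delta$ and $\mu$ interchanged yields the symmetric statement, and together these furnish exactly the hypotheses of Lemma \ref{lem:husdDist} with the given $s$.

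The main obstacle I anticipate is pinning down the thin structure precisely enough to justify the linear enumeration of regions and to dispatch the degenerate configurations (spikes on $\delta$ or $\mu$, boundary edges not adjacent to any region, vertices where two region boundaries meet at a point but share no edge). None of these should cause real trouble: once the ladder enumeration is in place, each degeneracy forces $v$ to coincide with a region-boundary vertex or with an endpoint already handled, so the distance bound $|V|\leq s$ is preserved in every case.
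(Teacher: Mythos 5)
Your argument is correct and follows essentially the same route as the paper's proof: connect the endpoint of each prefix of $\delta$ to a vertex of $\mu$ by a short path (which the paper takes as the shortest such path, bounded by $s$ "by thinness", and which you realize more explicitly as a boundary path of the single region containing that endpoint), and then read off $P_1V=P_2$ from the resulting closed loop. Your ladder enumeration and the half-perimeter bound $|V|\leq\lfloor|\partial D_i|/2\rfloor\leq s$ merely make precise the step the paper leaves as "clearly $|\rho|\leq s$", so no further comment is needed.
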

\begin{proof}
Let $P_1$ be a prefix of $xW$. Then, there is a decomposition $\delta = \delta_1 \delta_2$ where $\delta_1$ is labelled by $P_1$. Let $\rho$ be the shortest path in $M$ that connects the terminal vertex $t(\delta_1)$ to a vertex $v$ of $\mu$. Since $M$ is thin then clearly $|\rho|\leq s$. Decompose $\mu = \mu_1 \mu_2$ where the terminal vertex of $\mu_1$ is $v$. Let $P_2$ be the label of $\mu_1$. Then, $V_2$ is a prefix of $Uy$. Let $V$ be the label of $\rho$. We have that since $\delta_1 \rho \mu_1^{-1}$ is a closed loop in $M$ that $P_1 V P_2^{-1} = 1$ in $G$ and so $P_1 V  = P_2$ in $G$ as needed.
\end{proof}

The rest of this section is devoted to the definition of $C(4)\& T(4)$ maps and their properties. We start with the definition.

\begin{definition}[$C(4)\& T(4)$ maps and diagrams] \label{def:C4T4Diag}
Let $M$ be a map. We say that $M$ is a $C(4)\& T(4)$ map if the following two conditions hold:
\begin{enumerate}[(a)]
	\item\label{def:C4T4Diag:a} If $D$ is an inner region then $D$ has at least four neighbors.
	\item\label{def:C4T4Diag:b} If $v$ is an inner vertex then $v$ does not have valence three (however, valence two is allowed).
\end{enumerate}
A diagram is a $C(4)\& T(4)$ diagram if its underlying map is a $C(4)\& T(4)$ map.
\end{definition}

\begin{remark}
If $\P$ is an algebraic $C(4)\& T(4)$ presentation then a reduced van Kampen diagram $M$ over $\P$ is a $C(4)\& T(4)$ diagram. Condition (\ref{def:C4T4Diag:b}) of Definition \ref{def:C4T4Diag} above hold for every diagram over $\P$ (regardless of it being reduced). Condition (\ref{def:C4T4Diag:a}) of Definition \ref{def:C4T4Diag} hold for the following reason. If $M$ is a reduced diagram then if $D_1$ and $D_2$ are two neighboring regions in $M$ and $\alpha$ is a connected path in $\partial D_1 \cap \partial D_2$ then the label of $\alpha$ is a piece. Consequently, using the fact that a boundary label of every region in $M$ cannot be decomposed into a product of less than four pieces, we get that every inner region must have at least four neighbors (we are using here a result from \cite{Juh89} stating that it is enough to check Condition (\ref{def:C4T4Diag:a}) of Definition \ref{def:C4T4Diag} for regions with simple boundary cycle).
\end{remark}

One of the main contribution of this paper is the development of a technique which works with presentation where the second condition hold only in minimal diagrams (but may not hold for all diagrams). Next, we refine the definition of $C(4)\& T(4)$ maps as follows.

\begin{definition}[Proper $C(4)\& T(4)$ maps and diagrams] \label{def:PropC4T4Diag}
Let $M$ be a map. We say that $M$ is a proper $C(4)\& T(4)$ map if it is a $C(4)\& T(4)$ map and the following two conditions hold:
\begin{enumerate}[(a)]
	\item\label{def:PropC4T4Diag:a} There are no inner vertices of valance two.
	\item If $D$ is a region (not necessarily an inner region) then $\partial D$ contains at least four edges.
\end{enumerate}
A diagram is a proper $C(4)\& T(4)$ diagram if its underlying map is a proper $C(4)\& T(4)$ map.
\end{definition}

Condition (\ref{def:PropC4T4Diag:a}) of Definition \ref{def:PropC4T4Diag} is a technical condition which we can guarantee by removing all inner vertices of valence two. However, once Condition (\ref{def:PropC4T4Diag:a}) is satisfied one can count neighbors of an inner region $D$ by simply counting the number of edges in $\partial D$. Consequently, if there are no inner vertices of valence two then the difference between $C(4)\& T(4)$ maps which are not necessarily proper and proper $C(4)\& T(4)$ maps is that in proper $C(4)\& T(4)$ maps we also assume that boundary regions have at least four edges.

We complete the section with a characterization of thinness in proper $C(4)\& T(4)$ maps. The characterization is done through the idea of ``thick configurations''.

\begin{definition}[Thick configurations] \label{def:thickConf}
Let $M$ be a proper $C(4) \& T(4)$ map and let $\alpha$ be a path on the boundary of $M$. A \emph{thick configuration} in $\alpha$ is a sub-diagram $N$ of $M$ where one of the following holds:
\begin{enumerate}
	\item \textbf{Thick configuration of the first type}. $N$ contains single region $D$ with $\partial D = \mu \sigma^{-1}$ such that $\mu = \partial D \cap \alpha$ and $|\mu| > |\sigma|$. See Figure \ref{fig:thickConf}(a).
	\item \textbf{Thick configuration of the second type}. $N$ has connected interior and consists of two neighboring regions $D_1$ and $D_2$. The boundary of $D_2$ decomposes as $\partial D_2 = \mu\sigma^{-1}$ where $|\mu|=|\sigma|=2$, $\mu$ is a sub-path of $\alpha$, and $\sigma$ contains only inner edges. The boundary of $D_1$ contains an outer edge $e$ such that $e \mu$ is a sub-path of $\alpha$. See Figure \ref{fig:thickConf}(b).
\end{enumerate}
If there is a thick configuration along $\alpha$ then we say that \emph{$\alpha$ contains a thick configuration}.

\begin{figure}[ht]
\centering
\includegraphics[totalheight=0.20\textheight]{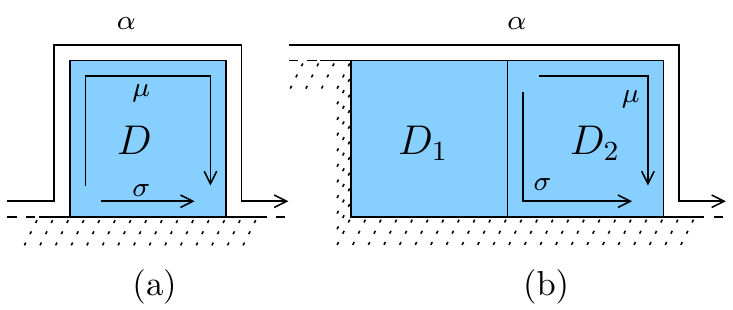}
\caption{Thick Configurations}\label{fig:thickConf}
\end{figure}

\end{definition}

The following theorem characterizes when a proper $C(4) \& T(4)$ diagram is thin. This is a special case of Theorem 13 in \cite{Wei07} (we do not give here the full statement of the theorem since it requires additional definitions which are beyond the scope of this work).

\begin{theorem} \label{thm:noThickToThinDiag}
Let $M$ be a proper $C(4) \& T(4)$ diagram with boundary cycle $\sigma \alpha \tau^{-1} \beta^{-1}$ such that $|\sigma|\leq 1$ and $|\tau|\leq1$. If $\alpha$ and $\beta$ do not contain thick configurations then $M$ is $(\sigma\alpha,\beta\tau)$-thin.
\end{theorem}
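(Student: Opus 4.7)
The plan is to prove the contrapositive by strong induction on the number $n$ of regions of $M$: if $M$ is a proper $C(4)\&T(4)$ diagram with boundary cycle $\sigma\alpha\tau^{-1}\beta^{-1}$ (with $|\sigma|\leq 1$, $|\tau|\leq 1$) that is not $(\sigma\alpha,\beta\tau)$-thin, then $\alpha$ or $\beta$ contains a thick configuration. The cases $n\leq 1$ follow directly from the definition of thinness. For the inductive step, non-thinness means that either (i) some region has at least three neighbors in $M$, or (ii) some region's boundary fails to meet one of $\sigma\alpha$, $\beta\tau$; in either case the goal is to exhibit a thick configuration along $\alpha$ or $\beta$.

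The first step is to dispose of thick configurations of the first type. If some region $D$ satisfies $|\partial D\cap\alpha| > |\partial D| - |\partial D\cap\alpha|$, then writing $\partial D=\mu\sigma^{-1}$ with $\mu=\partial D\cap\alpha$ gives $|\mu|>|\sigma|$, i.e., a thick configuration of the first type along $\alpha$, and we are done (the same check is performed on $\beta$). So assume henceforth that every region meeting $\alpha$ contributes at most half of its boundary to $\alpha$, and symmetrically for $\beta$.

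Under this assumption, I would try to locate a thick configuration of the second type by analyzing the sequence of regions $D^{(1)},\ldots,D^{(r)}$ that appear in order along $\alpha$. Every region has $|\partial D|\geq 4$ (properness), every inner vertex has valence $\geq 4$ (proper $C(4)\&T(4)$), and each $D^{(i)}$ contributes at most $|\partial D^{(i)}|/2$ edges to $\alpha$ by the previous step. A Lyndon-style curvature count — assigning angle $\pi/2$ to each corner — shows that if every $D^{(i)}$ also reached $\beta\tau$ and had only its two obvious neighbors $D^{(i-1)}$, $D^{(i+1)}$, then the regions would form a chain making $M$ thin, contradicting the hypothesis. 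The failure of this chain structure must then occur at a consecutive pair $D_1=D^{(i)}$, $D_2=D^{(i+1)}$: the region $D_2$ has exactly four boundary edges, two on $\alpha$ and two inner (one of which is shared with $D_1$), while $D_1$ contributes an outer edge $e$ adjacent to the two $\alpha$-edges of $D_2$. This is exactly a thick configuration of the second type.

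The main obstacle is case (ii), where some region has its entire boundary disjoint from $\sigma\alpha$ (or from $\beta\tau$). Here I would cut $M$ along a minimal inner path separating such a region from $\sigma\alpha$, obtaining a proper sub-diagram $N\subseteq M$ whose boundary has the form $\sigma'\alpha'(\tau')^{-1}(\beta')^{-1}$ with $\alpha'$ a sub-path of $\alpha$, $\beta'$ a path of inner edges, and $|\sigma'|,|\tau'|\leq 1$ (verified from the local geometry where the cut meets $\alpha$). Then $N$ has strictly fewer regions and inherits the proper $C(4)\&T(4)$ conditions, so the induction hypothesis produces a thick configuration in $\alpha'\subseteq\alpha$ or in $\beta'$; in the latter case one must still translate this configuration back to one in the original $\alpha$ or $\beta$ of $M$, and checking that this translation works — i.e., that properness and the boundary-length constraints are preserved by the cut, and that a type-2 configuration in $\beta'$ propagates to a genuine thick configuration in $\alpha$ — is the delicate technical step of the argument.
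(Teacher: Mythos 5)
First, note that the paper does not actually prove this theorem: it is imported as a special case of Theorem 13 of \cite{Wei07}, so there is no internal argument to compare yours against. Judged on its own terms, your outline has two genuine gaps. The first sits at the heart of the matter: after disposing of first-type thick configurations, you assert that the ``failure of the chain structure'' must occur at a consecutive pair $D^{(i)},D^{(i+1)}$ realizing exactly the second-type configuration --- $D_2$ with precisely two edges on $\alpha$ and two inner edges, and $D_1$ supplying an outer edge $e$ with $e\mu$ a sub-path of $\alpha$. That is precisely the conclusion of the theorem, not something that follows from ``the chain fails somewhere.'' A chain can fail in many other local ways (a region meeting $\alpha$ in a single edge but having three or more neighbours; a boundary vertex of high valence between $D_1$ and $D_2$ so that $D_1$ has no outer edge adjacent to $\mu$; positive curvature concentrated at the corners $\sigma$, $\tau$ or shared between $\alpha$ and $\beta$), and the entire content of the theorem is the discharging argument ruling these alternatives out when no thick configuration is present. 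Your ``Lyndon-style curvature count'' is named but never performed, so the step that actually produces the second-type configuration is asserted rather than derived.

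Second, your treatment of case (ii) would fail as described. If you cut $M$ along an inner path to obtain $N$ with boundary $\sigma'\alpha'(\tau')^{-1}(\beta')^{-1}$ where $\beta'$ consists of inner edges of $M$, the induction hypothesis may return a thick configuration lying along $\beta'$. Such a configuration is built from edges that are interior to $M$, so it is not a thick configuration of $M$ in the sense of Definition \ref{def:thickConf} (which requires the relevant edges to be outer edges of $M$), and there is no evident mechanism for propagating it to $\alpha$ or $\beta$; you flag this translation as the delicate step, but it is in fact the step that breaks. A workable proof has to be global --- a combinatorial Gauss--Bonnet/discharging argument over all of $M$ at once, as in \cite{Wei07} --- rather than an induction that cuts the diagram and recurses on a piece bounded partly by interior edges.
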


\section{From geometric conditions to bi-automaticity \label{sec:proof}}

In this section we prove Theorem \ref{thm:mainThmC4T4}. Let $G$ be a group and let $\P = \GPres{X|\RR}$ be a presentation of $G$ for which the properties of Theorem \ref{thm:mainThmC4T4} hold. The group $G$ and the presentation $\P$ are fixed throughout this section. Recall that the assumption of Theorem \ref{thm:mainThmC4T4} are that the presentation $\P$ satisfies the following two hypotheses:

\begin{enumerate}
	\item[(${\cal{H}}_1$)] All relators of $\P$ are of length four and cyclically reduced.
	\item[(${\cal{H}}_2$)] If $M$ is a minimal van Kampen diagram over the presentation $\P$ then:
	\begin{enumerate}
		\item $M$ is a $C(4) \& T(4)$ diagram.
		\item If $\rho$ is a path of $M$ which is on the boundary of two regions then $\rho$ is labelled by a generator.
	\end{enumerate}
\end{enumerate}

We will additionally assume that all diagrams have the property that outer edges are labelled by a generator (consequently, lengths of a paths and the length of their labels coincide). This assumption can be guaranteed by introducing vertices of valence two along the boundary. 

A key element of the proof is showing that the requirements of Proposition \ref{prop:falseKFT} hold. Thus, we need to construct an order ``$\prec$'' on $\ws{X}$ for which we need to show that for every non-minimal element $W \in \ws{X}$ we can find another element $V \in \ws{X}$ such that
\begin{enumerate}
 \item $V\prec W$;
 \item $V=W$ in $G$;
 \item $V$ and $W$ are $k$-fellow-travelers.
\end{enumerate}
For brevity, we would say in this case that $V$ $k$-refutes $W$ and also that $W$ can be refuted. A large part of the proof will be concerned with showing that the non-minimal elements of $\ws{X}$ can be refuted (according to an order we construct later). One of important characteristics of the order we construct is the property that if a word $V \in \ws{X}$ is shorter then a word $W \in \ws{X}$ then $V$ precedes $W$ in the order. Thus, we start with the following definition.

\begin{definition}[Shortable paths and words] \label{def:shortable}
Let $M$ be a minimal diagram over $\P$ with boundary label $\alpha\beta^{-1}$. We say that the path $\alpha$ is \emph{shortable} if there is a sub-diagram $N$ of $M$ with $\partial N = \mu\sigma^{-1}$ such that $\mu$ is a sub-path of $\alpha$, $|\mu|>|\sigma|$, and $N$ has connected interior and consists of at most two regions. A word $W \in \ws{X}$ is called \emph{shortable} if there is a a minimal diagram $M$ over $\P$ with boundary cycle $\alpha\beta^{-1}$ such that $\alpha$ is labelled by $W$ and $\alpha$ is shortable.
\end{definition}

Clearly, labels of shortable paths are not geodesic. Similarly, shortable words are not geodesics (since they are the label of a shortable path in some diagram).

\begin{lemma} \label{lem:shortableRefute}
If $W$ is shortable or is not freely-reduced then there is an element $V$ such that $|V|<|W|$, $V=W$ in $G$, and $V$ and $W$ are $6$-fellow-travellers.
\end{lemma}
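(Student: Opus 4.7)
The plan is to treat the two hypotheses of the lemma separately, in each case constructing $V$ by a purely local substitution inside $W$ so that the fellow-traveler bound comes straight out of Observation \ref{obs:CheckingFeloTrvlProp}.

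First, if $W$ is not freely reduced, I would write $W = A\,xx^{-1}\,B$ for some $x\in\pmX$ and set $V := AB$. Then $V = W$ in $G$, $\len{V} = \len{W}-2$, and the decompositions $W = A\cdot xx^{-1}\cdot B$, $V = A\cdot\eps\cdot B$ feed directly into Observation \ref{obs:CheckingFeloTrvlProp} to give a $2$-fellow-traveler bound, which is certainly at most $6$.

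Now suppose $W$ is shortable, witnessed by a minimal diagram $M$ with boundary label $\alpha\beta^{-1}$, $\alpha$ labelled by $W$, and a sub-diagram $N$ of $M$ with $\partial N = \mu\sigma^{-1}$ as in Definition \ref{def:shortable}. Decomposing $\alpha = \alpha_1\mu\alpha_2$ and reading off labels gives $W = A\cdot\Phi(\mu)\cdot B$ with $A = \Phi(\alpha_1)$ and $B = \Phi(\alpha_2)$; I set $V := A\cdot\Phi(\sigma)\cdot B$. Since $\mu\sigma^{-1}$ bounds $N$, one has $\Phi(\mu) = \Phi(\sigma)$ in $G$ and so $V = W$ in $G$. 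The inequality $\len{\mu}>\len{\sigma}$, together with the running assumption that outer edges are generator-labelled (so path length equals label length throughout), gives $\len{V}<\len{W}$. Observation \ref{obs:CheckingFeloTrvlProp} then shows that $W$ and $V$ are $(\len{\mu}+\len{\sigma})$-fellow-travelers.

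The remaining step, which is the only real content, is to bound $\len{\mu}+\len{\sigma}\leq 6$. By hypothesis $(\mathcal{H}_1)$ every relator has length $4$. If $N$ consists of a single region $D$, then $\partial N = \partial D$ has length $4$, giving $\len{\mu}+\len{\sigma}=4$. If $N$ has two regions $D_1,D_2$, the connected-interior hypothesis forces them to be neighbors and hence to share a connected path $\tau$ with $\len{\tau}\geq 1$, yielding
\[
\len{\mu}+\len{\sigma} \;=\; \len{\partial D_1}+\len{\partial D_2}-2\len{\tau} \;=\; 8-2\len{\tau} \;\leq\; 6.
\]
The only point requiring care is this last subcase: one must ensure that $\partial D_1\cap\partial D_2$ really contains an edge (not merely a common vertex), which is precisely what the connected-interior hypothesis in Definition \ref{def:shortable}, combined with the notion of neighbors from Section \ref{sec:vanKampen}, provides.
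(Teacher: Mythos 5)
Your proof is correct and follows essentially the same route as the paper's: handle the non-freely-reduced case by deleting $xx^{-1}$, and the shortable case by replacing the label of $\mu$ with that of $\sigma$, invoking Observation \ref{obs:CheckingFeloTrvlProp} for the fellow-traveler bound. The only difference is that you spell out the counting argument behind $\len{\mu}+\len{\sigma}\leq 6$, which the paper merely asserts from $(\mathcal{H}_1)$ and the connected-interior hypothesis.
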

\begin{proof}
Assume that $W$ is shortable. Let $M$ be a minimal diagram over $\P$ with boundary cycle $\alpha\beta^{-1}$ such that $\alpha$ is labelled by $W$ and $\alpha$ is shortable. Let $N$ a sub diagram of $M$ containing at most two regions with $\partial N = \mu\sigma^{-1}$, $\mu$ is a sub-path of $\alpha$, and $|\mu|>|\sigma|$. Note that since $N$ contains at most two regions we have $|\mu|+|\sigma|\leq6$ (since $N$ has connected interior and using hypothesis ${\cal{H}}_1$). Suppose that $\alpha = \alpha_1 \mu \alpha_2$ and $W = W_1 W_2 W_3$ where $\alpha_1$ is labelled by $W_1$, $\mu$ is labelled by $W_2$, and $\alpha_2$ is labelled by $W_3$. Let $V$ be the label of $\sigma$ and let $U = W_1 V W_3$. Then, $|V| < |W_2|$ and so $|U| < |W|$. Since $V = W_2$ in $G$ we also get that $U = W$ in $G$. Finally by Observation \ref{obs:CheckingFeloTrvlProp} we get that $U$ and $W$ and $6$-fellow-travellers. Similarly, if $W$ is not freely-reduced then we can write $W = W_1 xx^{-1} W_2$ for $x \in \pmX$. Let, $U = W_1 W_2$. The conclusion now follows along the same lines.
\end{proof}

We next remark on two implications of hypothesis ${\cal{H}}_2$.

\begin{figure}[ht]
\centering
\includegraphics[totalheight=0.1\textheight]{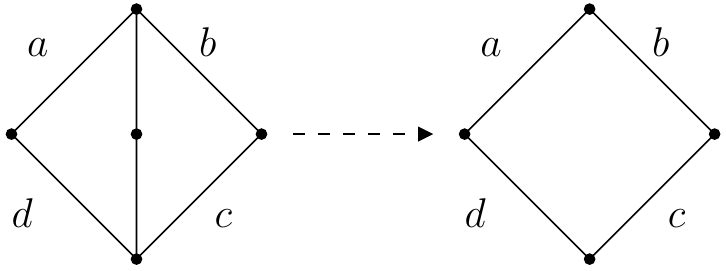}
\caption{Inner vertex of valence two and a possible fix}\label{fig:innVertValTwoAndFix}
\end{figure}

\begin{remark} \label{rem:twoTypesOfNonMin}
There are two examples of non-minimal diagrams (over the presentation $\P$) which occur frequently. In these cases the assumption is that the boundary label is freely-reduced and that the diagrams are reduced. 

The first examples is of a diagram with two regions and one inner vertex of valence two; see the left side of Figure \ref{fig:innVertValTwoAndFix}. By the second condition of hypothesis ${\cal{H}}_2$, paths in the boundary of two regions in a minimal diagram are labelled by a generator. But, in this case the inner path is labelled by two generators. Thus, the diagram is not minimal. Since we are assuming that the diagram is reduced it follows that the only way to reduce the number of regions is by replacing them with a single region; this is illustrated in the right side of Figure \ref{fig:innVertValTwoAndFix}.

The second examples is of a diagram with three regions and one inner vertex of valence three; see the top part of Figure \ref{fig:innVertValThreeAndFixes}. Due to the vertex of valence three, the diagram is not a $C(4)\&T(4)$ diagrams. It follows from the first condition of hypothesis ${\cal{H}}_2$ that this diagram is not minimal. In the bottom of the figure we illustrate three possible diagrams which have the same boundary label but with two regions (clearly, reducing the number of regions to one is impossible due to the length of the boundary). It is important to note that since we assumed that the boundary label of the top diagram is freely-reduced we get that one of these examples is a diagram that can be constructed over the presentation $\P$. Notice however that \emph{not} necessarily all of the three diagrams can be constructed over the presentation $\P$ (because there my not be enough relations in $\P$ to construct these diagrams).
\end{remark}

\begin{figure}[ht]
\centering
\includegraphics[totalheight=0.30\textheight]{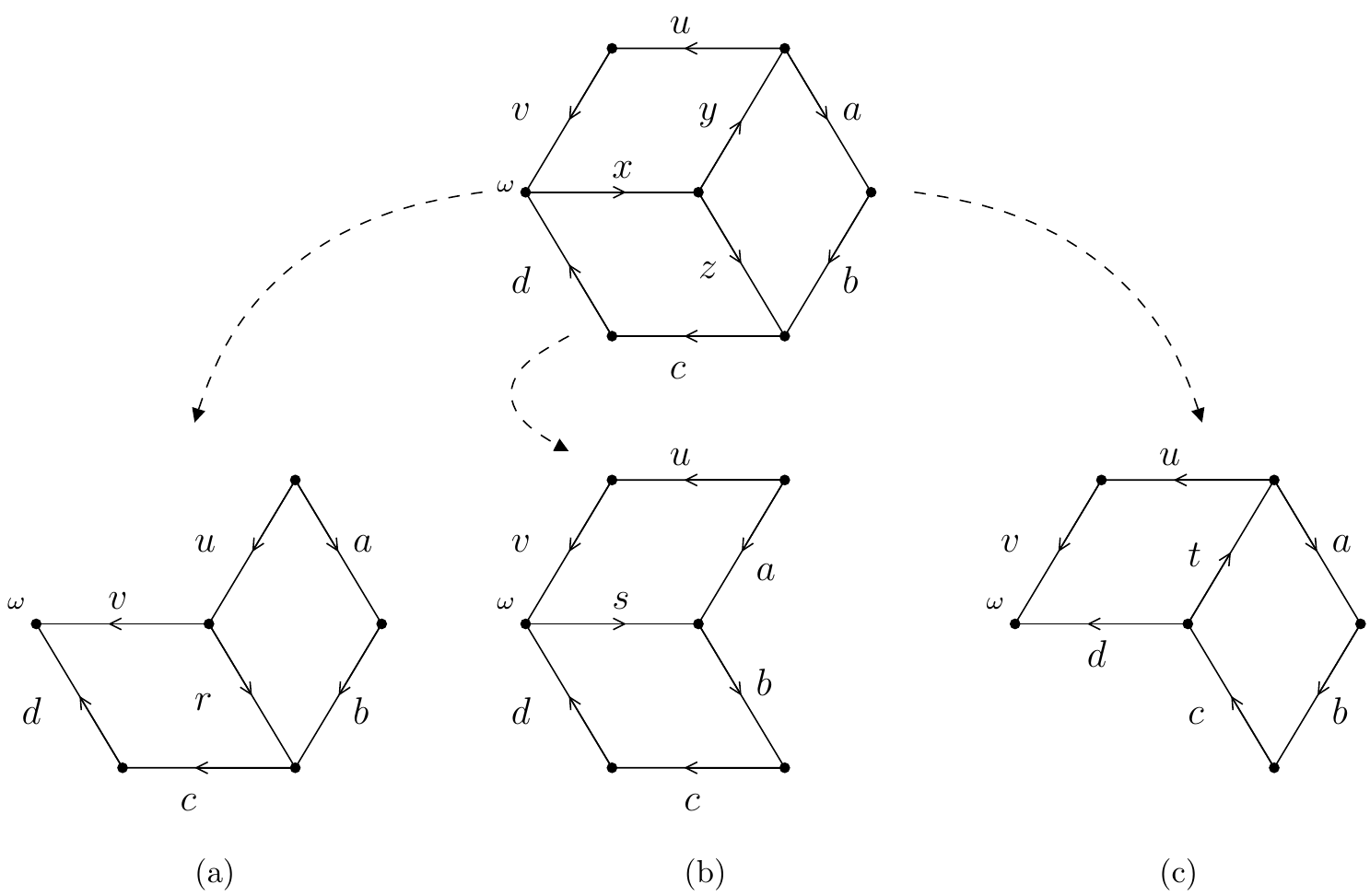}
\caption{Inner vertex of valence three and possible fixes}\label{fig:innVertValThreeAndFixes}
\end{figure}

The main difference between the presentation $\P$ and a presentation satisfying the algebraic $C(4) \& T(4)$ conditions lays in the fact that one can construct reduced diagrams with inner vertices of valence three. As shown in Figure \ref{fig:innVertValThreeAndFixes} there may be up to \emph{three ways} to reduce the number of regions in such diagrams. If there are \emph{at least two} different ways to minimize such a diagram then we get two diagrams that have the same boundary label but which are not equivalent (in the bottom of Figure \ref{fig:innVertValThreeAndFixes} we illustrate three diagrams with the same boundary label which are not equivalent). As we shall see later, it is important to understand this situation. We thus introduce the following terminology.

\begin{definition}[Domino diagram] \label{def:domino}
A \emph{domino diagram} is a diagram $M$ which contains two regions, its boundary is of length six, and there is a single inner edge. A \emph{biased domino} is a couple $(M,\mu)$ where $M$ is a domino diagram $M$ and $\mu$ is a boundary cycle of $M$. We will also say that the domino $M$ is biased through the boundary cycle $\mu$. Also, we will sometimes abuse the notation and say that $M$ is biased when $\mu$ is known from the context. A biased domino $(M,\mu)$ has three possible types: 
\begin{enumerate}
 \item \textbf{3-Typed}. When the first three edges of $\mu$ lay on one region. See Figure \ref{fig:oriDom}(a).
 \item \textbf{2-Typed}. When the first two edges of $\mu$ lay on one region but not the first three. See Figure \ref{fig:oriDom}(b).
 \item \textbf{1-Typed}. When the first edge of $\mu$ lays on one region but not the first two edges. See Figure \ref{fig:oriDom}(c).
\end{enumerate}
\end{definition}

\begin{figure}[ht]
\centering
\includegraphics[totalheight=0.15\textheight]{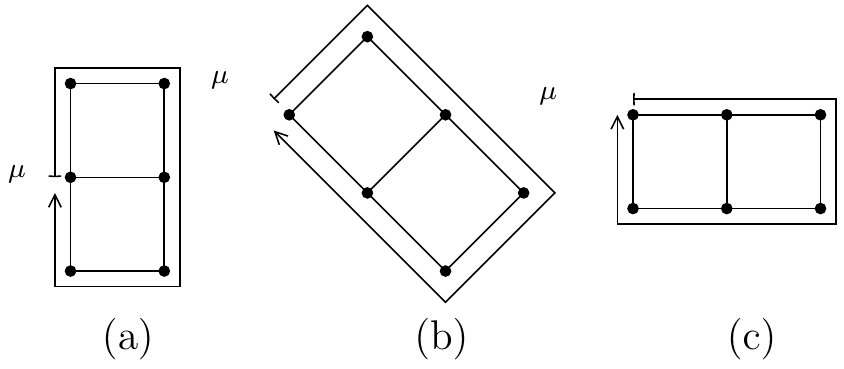}
\caption{The three types of biased dominos}\label{fig:oriDom}
\end{figure}

Next we define the important notion of a \emph{stable domino}. The idea is that a domino is stable if we cannot replace its interior without changing its boundary (see below for a rigorous definition). One reason why stable dominos are important comes from the fact that if the presentation $\P$ satisfies the algebraic $T(4)$ condition then all dominos are stable (this is an easy fact which follows from the definitions).

\begin{definition}[Stable Domino] \label{def:stableDomino}
Let $M$ be a domino diagram. $M$ is a \emph{stable domino} if any other domino with the same boundary label is equivalent to $M$ (see the discussion after Definition \ref{def:vanKampenDiagram}). In other words, if $\mu$ is a boundary cycle of $M$ and we are given a biased domino $(M',\mu')$ such that $\mu$ and $\mu'$ have the same label then we have that $(M,\mu)$ and $(M',\mu')$ have the same type. A domino which is not stable will be called \emph{unstable}.
\end{definition}

As an example, if two of the dominos at the bottom of Figure \ref{fig:innVertValThreeAndFixes} can be constructed over the presentation $\P$ then these two dominos are not stable (since they have the same boundary label but are not equivalent). On the other hand, if \emph{only one} of the dominos at the bottom of Figure \ref{fig:innVertValThreeAndFixes} can be constructed then this domino is stable. We emphasis this point since it may very well be that not all three diagrams on the bottom can be constructed over $\P$ (they are just drawn for illustration purposes)

We turn to define an order on the elements of $\ws{X}$. We begin by identifying triplets of generators which appear as labels of stable dominos.

\begin{definition}[Stable triplets] \label{def:stableTriplet}
Let $x,y,z \in \pmX$. We say that the triplet $(x,y,z)$ is stable if there is a \emph{stable} domino $N$ were its boundary cycle contains two consecutive edges $e_1$ and $e_2$ which are labelled by $x$ and $y$, respectively. Also, the terminal vertex of $e_1$ (which is also the initial vertex of $e_2$) is of valence three and the inner edge $e_3$ that emanate from it is labelled by $z$; see Figure \ref{fig:stableTriplet}.
\end{definition}

\begin{figure}[ht]
\centering
\includegraphics[totalheight=0.10\textheight]{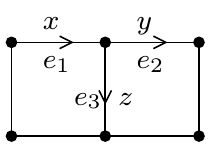}
\caption{Illustration of a stable triplet}\label{fig:stableTriplet}
\end{figure}

Using the definition of stable triplets we can attach a special graph to each generator in $\pmX$.

\begin{definition}[The Stability Graph $\Gamma_x$] \label{def:stabilityGraph}
Let $x \in \pmX$. We define the directed graph $\Gamma_x = (V_x, E_x)$ with vertex set $V_x$ and edge set $E_x$. The vertex set $V_x$ is the set $\pmX$. Let $y$ and $z$ be two elements of $V_x$ (i.e., two generators). There is a directed edge $(y,z) \in E_x$ (i.e., an edge from $y$ to $x$) if and only if $(x,y,z)$ is a stable triplet.
\end{definition}

We claim that the stability graph $\Gamma_x$ induces a linear order `$<_x$' on $\pmX$ such that if $(x,y,z)$ is a stable triplet then `$z<_x y$'. It will only be true if the graph $\Gamma_x$ contains no cycles. This is the content of the next proposition, whose proof is postponed to Sub-Section \ref{ssec:StabGrphNoCycle}.

\begin{proposition} \label{prop:StabGrphNoCycle}
Let $x \in \pmX$. The graph $\Gamma_x$ is cycle-free.
\end{proposition}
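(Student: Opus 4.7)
The plan is to argue by contradiction. Suppose $\Gamma_x$ contains a directed cycle, and let $y_1 \to y_2 \to \cdots \to y_n \to y_1$ be a simple cycle of minimum length. For each $i$ (indices modulo $n$) we obtain a stable domino $N_i$ witnessing the triplet $(x, y_i, y_{i+1})$: writing its two relators as $R_i = x\, y_{i+1}\, f_1^{(i)}\, f_2^{(i)}$ (on the region touching the outer $x$-edge) and $R_i' = y_{i+1}^{-1}\, y_i\, g_1^{(i)}\, g_2^{(i)}$, the inner edge of $N_i$ carries the label $y_{i+1}$, and the vertex at the junction of the outer edges $x$ and $y_i$ in $N_i$ has valence three on the outer boundary of $N_i$.

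The first step is to build a reduced van Kampen diagram $M$ over $\P$ that contains an \emph{interior} vertex of valence three. I would treat the length-two case first: when $(x, y_1, y_2)$ and $(x, y_2, y_1)$ are both stable, I take the single region $D$ of $N_2$ whose boundary label is $x\,y_1\,f_1^{(2)}\,f_2^{(2)}$ and glue it onto the outer boundary of $N_1$ along the length-two sub-path labelled $xy_1$. Since the outer edge $x$ sits on the region $D_1^{(1)}$ of $N_1$ and the outer edge $y_1$ sits on the other region $D_2^{(1)}$, this gluing is geometrically admissible. In the resulting three-region diagram, the vertex that was previously the valence-three boundary vertex of $N_1$ between $x$ and $y_1$ is no longer on the boundary, and the three edges labelled $x$, $y_1$ and $y_2$ incident to it are all interior; so $M$ contains an interior vertex of valence three.

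The second step, which I expect to be the main obstacle, is to show that this interior valence-three vertex cannot be removed by passing to a minimal diagram with the same boundary label, so that hypothesis $({\cal H}_2)(a)$ is violated. The idea is to exploit the stability of $N_1$ and $N_2$ directly. If there were a strictly smaller diagram $\tilde M$ with the same boundary label as $M$, then re-gluing $\tilde M$ to the removed region $D$ along the sub-path labelled $f_1^{(2)} f_2^{(2)}$ would produce a diagram with the same boundary label as $N_1$ but containing a biased sub-domino, in the sense of Definition~\ref{def:domino}, whose type (1-Typed, 2-Typed or 3-Typed) need not match that of $N_1$. A case split over the three types of biased dominoes, together with the length-four and cyclic reducedness of relators from hypothesis $({\cal H}_1)$, should rule out every candidate $\tilde M$ by producing a domino with the same boundary label as $N_1$ but of a different type, contradicting the stability of $N_1$.

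For cycles of length $n \geq 3$, I would carry out the analogous telescoping construction, attaching the distinguished regions of $N_{i+1}$ onto $N_i$ along the outer sub-paths labelled $xy_{i+1}$ successively around the cycle. Simplicity of the cycle guarantees that the $y_i$ are distinct, which together with $({\cal H}_1)$ ensures the intermediate boundary labels are freely reduced, and the same stability argument excludes any minimization that erases the interior valence-three vertex produced at each gluing. The bulk of the proof is the careful bookkeeping of biased-domino types in the case analysis of the previous paragraph, which is where the specific hypotheses of this section enter essentially.
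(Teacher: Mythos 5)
There is a genuine gap, and it starts with the target of the contradiction. You propose to build a diagram $M$ containing an \emph{interior} vertex of valence three and then to show that this vertex ``cannot be removed by passing to a minimal diagram, so that hypothesis $({\cal H}_2)(a)$ is violated.'' But $({\cal H}_2)(a)$ constrains \emph{minimal} diagrams only: a diagram with an interior valence-three vertex is simply not minimal, and a minimal diagram with the same boundary label always exists (take one with the fewest regions) and is guaranteed to be a $C(4)\&T(4)$ diagram. So the vertex can always be ``removed,'' and no contradiction with $({\cal H}_2)$ is available. Indeed the whole point of this geometric setting is that non-minimal diagrams with valence-three vertices do occur. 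The contradiction the paper reaches is of a different kind: it is with the \emph{stability} of the domino $N_n$ witnessing the edge $y_n\to y_1$ that closes the cycle — one exhibits a second domino $Q$ with the same boundary label as $N_n$ but of a different type (Definition~\ref{def:stableDomino}).

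You do gesture at the right kind of contradiction when you speak of ``producing a domino with the same boundary label as $N_1$ but of a different type,'' but the mechanism (``re-gluing $\tilde M$ to the removed region $D$'') is not an operation that yields such a domino, and the step where non-minimality of a three-region diagram is converted into a competing domino whose \emph{type is controlled} is precisely the content of Lemma~\ref{lem:stableDom}: stability of $N_1$ forces the replacement domino with boundary $zw\,a_1b_1c_1d_1$ to have the \emph{same} type as $N_1$, and it has a \emph{modified} boundary label, not the same label with a different type. Moreover, for a cycle of length $n$ a single such substitution is nowhere near enough. The paper applies Lemma~\ref{lem:stableDom} once per domino $N_1,\dots,N_{n-1}$, walking around the cycle, tracking the labels $m_1,m_2,\dots$ of the new inner edges and the identities $d_n^{-1}m_i=xy_{i+1}$, and then combines these with the telescoping relation $(a_1b_1)(a_2b_2)\cdots(a_nb_n)=1$ in $G$ to extract the two relators $m_{n-1}a_nb_nc_n$ and $d_n^{-1}m_{n-1}y_n^{-1}x^{-1}$ from which the competing domino $Q$ is assembled. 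None of this bookkeeping — which is where the proof actually lives — appears in your proposal, and the successive geometric gluings you describe for $n\ge 3$ are not needed and would in any case require orientation checks you have not made. As written, the argument does not close.
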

\begin{proof}
See Sub-Section \ref{ssec:StabGrphNoCycle}.
\end{proof}

\begin{corollary} \label{cor:linOrder}
There exist a linear order `$<_x$' defined on $\pmX$ such that if $(x,y,z)$ is a stable triplet for $y$ and $z$ in $\pmX$ then $z<_x y$.
\end{corollary}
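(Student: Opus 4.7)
The plan is to reduce this directly to the standard fact that any finite acyclic digraph admits a topological ordering. By Proposition \ref{prop:StabGrphNoCycle}, for each $x \in \pmX$ the graph $\Gamma_x = (V_x, E_x)$ is a finite directed graph (since $X$ is finite, so is $\pmX = V_x$) which contains no directed cycles.

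First I would form the reflexive--transitive closure of the edge relation on $V_x$. Acyclicity of $\Gamma_x$ guarantees that this closure is antisymmetric, so it is a partial order on $\pmX$ in which $z$ is below $y$ whenever there is a directed edge $(y,z) \in E_x$, i.e.\ whenever $(x,y,z)$ is a stable triplet. Next I would extend this partial order to a total order by any standard linear extension (Szpilrajn's theorem, or just an inductive greedy topological sort since $\pmX$ is finite: repeatedly pick a sink of the remaining sub-DAG and declare it the current minimum). Call the resulting linear order `$<_x$'.

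To finish, I would verify the stated property: if $(x,y,z)$ is a stable triplet then by Definition \ref{def:stabilityGraph} we have $(y,z) \in E_x$, so $z$ is below $y$ in the partial order, and hence $z <_x y$ in its linear extension. This gives the required order.

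There is no real obstacle in this argument; the entire content has been absorbed into Proposition \ref{prop:StabGrphNoCycle}. The only thing to be careful about is that the arrows of $\Gamma_x$ should be interpreted in the ``greater-than'' direction (an edge $(y,z)$ means $z <_x y$, not $y <_x z$), and so the topological sort must be taken accordingly; with that convention fixed the corollary is immediate.
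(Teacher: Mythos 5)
Your proof is correct and is essentially identical to the paper's own argument: acyclicity (Proposition \ref{prop:StabGrphNoCycle}) gives a partial order via the transitive closure, which is then extended to a linear order, with the edge $(y,z)\in E_x$ read as $z<_x y$. You merely spell out the details (Szpilrajn / topological sort) that the paper leaves implicit.
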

\begin{proof}
This follows since a cycle-free directed graph induce a partial order on the set of vertices. This partial order can be completed into a linear order on the vertices.
\end{proof}

\begin{definition}[Grading function] \label{def:gradingFunc}
A grading functions for $x \in \pmX$ is a function $G_x:\pmX\to\Set{1,\ldots,2|X|}$ such that if $y$ and $z$ are in $\pmX$ and $(x,y,z)$ is a stable triplet then $G_x(z) < G_x(y)$.
\end{definition}

The following lemma is clear from Corollary \ref{cor:linOrder}.

\begin{lemma}
For each $x \in \pmX$ there is a grading function.
\end{lemma}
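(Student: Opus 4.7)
The statement is immediate from Corollary \ref{cor:linOrder}, and I would prove it in a single short paragraph. Since $X$ is finite, the set $\pmX = X \cup X^{-1}$ has exactly $2|X|$ elements (assuming no generator equals its own inverse, which is the standard convention implicit in the paper; if some generators were self-inverse one would simply land in a smaller initial segment of $\{1,\ldots,2|X|\}$, which still satisfies the codomain requirement). So the plan is just to convert the linear order $<_x$ produced by Corollary \ref{cor:linOrder} into a rank function.

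Concretely, let $<_x$ be the linear order on $\pmX$ given by Corollary \ref{cor:linOrder}, with the property that $(x,y,z)$ being a stable triplet implies $z <_x y$. Enumerate the elements of $\pmX$ in increasing order as $a_1 <_x a_2 <_x \cdots <_x a_{2|X|}$, and define
\[
G_x(a_i) = i \quad \text{for } i = 1, 2, \ldots, 2|X|.
\]
Then $G_x$ is a function from $\pmX$ to $\{1,\ldots,2|X|\}$, and for any $y, z \in \pmX$ we have $z <_x y$ if and only if $G_x(z) < G_x(y)$. In particular, whenever $(x,y,z)$ is a stable triplet, Corollary \ref{cor:linOrder} gives $z <_x y$, hence $G_x(z) < G_x(y)$, which is exactly the defining property of a grading function in Definition \ref{def:gradingFunc}.

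There is no real obstacle here: all the work has already been done in establishing that $\Gamma_x$ is cycle-free (Proposition \ref{prop:StabGrphNoCycle}) and extending the resulting partial order to a linear order (Corollary \ref{cor:linOrder}); the lemma is merely the observation that a linear order on a finite set of size $n$ can be encoded as an injection into $\{1,\ldots,n\}$ preserving the order.
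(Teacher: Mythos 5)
Your proof is correct and is exactly the argument the paper has in mind: the paper simply states that the lemma "is clear from Corollary \ref{cor:linOrder}", and your rank-function construction from the linear order $<_x$ is the obvious way to make that explicit.
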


For the rest of the section we fix a set $\Set{G_x|x \in \pmX}$ of grading functions. Next, the order on $\ws{X}$.

\begin{definition}[Peifer vector]
Let $W = x_1 x_2 \cdots x_n \in \ws{X}$ be a word of length $n$. We assign a vector $\kappa_W = (a_1,a_2,\ldots,a_n) \in \mathbb{N}^n$ to $W$. The first entry $a_1$ is zero (i.e., $a_1 = 0$) and $a_i = G_{x_{i-1}}(x_i)$ for $1 < i \leq n$.
\end{definition}

There is a natural lexicographical order on the elements of $\mathbb{N}^*$, the finite vectors over the natural numbers. Also, by fixing some---arbitrary---order on $\pmX$ we can assign a lexicographical order on $\ws{X}$. We use these to define an order on $\ws{X}$.

\begin{definition}[Order ``$\prec$'' on $\ws{X}$]
Let $W$ and $U$ be two elements of $\ws{X}$. We say that $W \prec U$ if either:
\begin{enumerate}
	\item $\kappa_W$ precedes $\kappa_U$ in lexicographical order.
	\item $\kappa_W = \kappa_U$ and $W$ precedes $U$ in lexicographical order.
\end{enumerate}
\end{definition}

It is straightforward to show that the Peifer order is regular (Definition \ref{def:regOrder}). Thus, the proof of the following lemma is omitted.

\begin{lemma} \label{lem:precRegular}
The order ``$\prec$'' is regular.
\end{lemma}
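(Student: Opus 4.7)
The plan is to construct a deterministic finite automaton $\mathcal{A}$ whose accepted language is exactly $\delta_{\pmX}\bigl(\Set{(W,U) \in \ws{X}\times\ws{X} | W \prec U}\bigr)$. Once this is in hand, regularity of $\prec$ is immediate from the standard equivalence between finite automata and regular languages.

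The key observation driving the construction is local: the $i$-th entry $a_i$ of the Peifer vector of a word depends only on the $(i{-}1)$-th and $i$-th letters, so the whole vector can be generated on the fly by an automaton that stores only the most recently scanned letter. Since $\pmX$ is finite and each grading function $G_x$ takes values in $\Set{1,\ldots,2|X|}$, only finitely much information is needed. I take the state space of $\mathcal{A}$ to be tuples $(p_W, p_U, s_\kappa, s_w)$, where $p_W, p_U \in \pmX \cup \Set{*}$ record the most recent letter on the $W$-track and the $U$-track respectively (with $*$ standing for ``nothing scanned yet, or this track has reached the pad symbol $\$$''), $s_\kappa \in \Set{<,=,>}$ records the running lexicographic comparison of the Peifer vectors produced so far, and $s_w \in \Set{<,=,>}$ records the analogous running comparison of $W$ against $U$ as words under some fixed linear order on $\pmX$. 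On reading an input symbol $(\alpha,\beta)$ the automaton refreshes the previous-letter slots; whenever $\alpha \in \pmX$ a new Peifer coordinate is produced on the $W$-track, equal to $0$ if $p_W = *$ (the $a_1 = 0$ convention) and to $G_{p_W}(\alpha)$ otherwise, and symmetrically for the $U$-track. If $s_\kappa$ is still $=$, it is updated by comparing the two newly produced coordinates; if exactly one of the two tracks produces a new coordinate, then the other track has just been exhausted, its Peifer vector is now a proper prefix of the other's, and $s_\kappa$ is set accordingly. Once $s_\kappa$ leaves $=$ it is frozen. The register $s_w$ is refreshed in a completely analogous way by direct comparison of $\alpha$ and $\beta$. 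The accepting set consists of the states with $s_\kappa = {<}$ together with those with $s_\kappa = {=}$ and $s_w = {<}$.

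Verifying that $\mathcal{A}$ accepts exactly the encodings of pairs $(W,U)$ with $W\prec U$ is routine and mirrors the two clauses of the definition of $\prec$: if the Peifer vectors differ then $s_\kappa$ has settled to the correct strict relation by the time the input is fully consumed; if they coincide then $s_w$ records the outcome of the tie-breaking lexicographic comparison of $W$ with $U$. The only mildly delicate point is the treatment of the pad symbol $\$$---one has to commit to the convention that a proper prefix of a longer Peifer vector is strictly smaller, and then check that the transition function implements this---but the bookkeeping is immediate. This shows that $\delta_{\pmX}(\prec)$ is regular, which is exactly the assertion that $\prec$ is a regular order in the sense of Definition~\ref{def:regOrder}.
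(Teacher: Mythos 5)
Your construction is correct, and it is exactly the routine product-automaton argument the paper declares ``straightforward'' and omits: since each Peifer coordinate depends only on two consecutive letters, a finite-state machine reading the padded convolution can generate and lexicographically compare the two vectors on the fly with a bounded amount of memory. One small caveat worth recording: the paper later relies (e.g.\ in Lemma \ref{lem:nonMinCanRef}) on shorter words being $\prec$-smaller, which forces the comparison of Peifer vectors to be length-first rather than the pure prefix-is-smaller convention you adopt; this changes nothing essential, since your automaton can simply remember which track reached the pad symbol $\$$ first and fold that information into the acceptance condition.
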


Our next goal is to show that the conditions of Proposition \ref{prop:falseKFT} hold for the order ``$\prec$''. We start with two definitions: the definition of a domino that is contained in a path and the definition of a domino that is well-positioned.

\begin{definition}[Domino contained in a path] \label{def:contInPath}
Let $M$ be a diagram over the presentation $\P$ with boundary cycle $\alpha\beta^{-1}$ and let $N$ be a domino sub-diagram of $M$. Suppose that the boundary cycle of $N$ decomposes as $\partial N = \mu\sigma^{-1}$. We say that \emph{$N$ is contained in $\alpha$} (resp., in $\beta$) if $|\mu| = |\sigma| = 3$ and the following hold:
\begin{enumerate}
	\item $\partial N \cap \alpha = \mu$ (resp., $\partial N \cap \beta = \mu$).
	\item $\alpha = \alpha_1 \mu \alpha_2$ (resp., $\beta = \beta_1 \mu \beta_2$).
	\item The last edge of $\sigma$ is an inner edge (this is the fourth edge in the boundary cycle of $M$ which start with $\mu$).
\end{enumerate}
If the domino $N$ is contained in $\alpha$ (resp., $\beta$) then, using the above notation, it is by default made biased through the boundary cycle $\mu\sigma^{-1}$. 
\end{definition}

In words, a domino $N$ is contained in a boundary path $\alpha$ if exactly half of the boundary cycle $\partial N$ is contained in the path $\alpha$. Also, the last edge of boundary cycle which is not on the path $\alpha$ is an inner edge.

\begin{definition}[Well-positioned domino] \label{def:wellPosDominos}
Let $M$ be a diagram over the presentation $\P$ with boundary cycle $\alpha\beta^{-1}$ and let $N$ be a domino sub-diagram of $N$. Suppose that $N$ is made biased through a boundary cycle $\partial N = \mu\sigma^{-1}$ where $\mu = \alpha \cap \partial N$. Suppose further that the domino $N$ is contained in $\alpha$ (resp., $\beta$). We say that $N$ is \emph{well-positioned} in $\alpha$ (resp., in $\beta$) if either:
\begin{enumerate}
	\item\label{def:wellPosDominos:1} $N$ is $3$-typed or $2$-typed.
	\item\label{def:wellPosDominos:2} $N$ is $1$-typed and $N$ is a stable domino.
\end{enumerate}
\end{definition}

So, if a domino $N$ is $1$-typed and is well-positioned then it follows that it is also stable. Also, if a domino is contained in a path and is not well-positioned then it is $1$-typed and is not stable. With these definitions we can give the first result toward showing that non-``$\prec$''-minimal elements can be refuted.

\begin{proposition} \label{prop:thickConfToRefute}
Let $M$ be a minimal diagram with boundary cycle $\alpha\beta^{-1}$. Assume that all dominos contained in $\alpha$ are well-positioned. Let $W \in \ws{X}$ be the label of $\alpha$ and assume that $W$ is not shortable (Definition \ref{def:shortable}). If $\alpha$ contains a thick configuration (Definition \ref{def:thickConf}) then $W$ can be $4$-refuted.
\end{proposition}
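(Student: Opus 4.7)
The plan is to dispose of the first type of thick configuration immediately (it contradicts non-shortability) and to handle the second type by extracting a stable triplet from the natural two-region sub-diagram and rewriting with a single relator.

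For a thick configuration of the first type, the single region $D$ with $\partial D=\mu\sigma^{-1}$, $\mu\subseteq\alpha$, $|\mu|>|\sigma|$ satisfies every clause of Definition \ref{def:shortable} with $N=D$, forcing $W$ to be shortable and contradicting the hypothesis. So only configurations of the second type occur. Using the notation of Definition \ref{def:thickConf}, write $\mu=e_1 e_2\subseteq\alpha$, $\sigma=f_1 f_2$ for the inner path, and let $e$ be the outer edge of $D_1$ with $e\cdot\mu\subseteq\alpha$ and $v=t(e)=i(e_1)$. The first sub-step is to identify $N:=D_1\cup D_2$ as a domino contained in $\alpha$ in the sense of Definition \ref{def:contInPath}. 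Since every relator has length four and the interior of $N$ is connected, $D_1$ and $D_2$ share at least one edge; geometrically this edge must be incident to $v$, which rules out $f_2$ and forces the shared edge to be $f_1$. Hence $|\partial N|=4+4-2=6$ with $f_1$ the unique inner edge and $e e_1 e_2$ the intersection of $\partial N$ with $\alpha$, followed in the cycle by $f_2$ as the required inner edge.

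Next, the biased cycle of $N$ starting with $e e_1 e_2$ has its first edge $e$ on $D_1$ and its second edge $e_1$ on $D_2$, so $N$ is $1$-typed (Definition \ref{def:domino}). The well-positioned hypothesis combined with Definition \ref{def:wellPosDominos} therefore forces $N$ to be a stable domino. Within $N$ the vertex $v$ is incident to exactly the three edges $e,e_1,f_1$, and $f_1$ is the inner edge emanating from $v$; writing $x,y,z\in\pmX$ for the labels of $e,e_1,f_1$, the triplet $(x,y,z)$ is therefore a stable triplet (Definition \ref{def:stableTriplet}), and the grading-function property gives $G_x(z)<G_x(y)$.

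Finally I would build the refuting word $V$ from $W$ by rewriting the two letters coming from $e_1 e_2$ using the relator $\partial D_2$. Writing $W=W_1\,xyy'\,W_2$ with $y'$ the label of $e_2$ and letting $z'$ be the label of $f_2$, set $V:=W_1\,xzz'\,W_2$. The relator $\partial D_2$ gives $yy'=zz'$ in $G$, hence $V=W$ in $G$; only two consecutive letters differ, so Observation \ref{obs:CheckingFeloTrvlProp} yields the $4$-fellow-traveller property; and the Peifer vectors agree up to (and including) the entry for $x$, while at the next position $\kappa_V$ has $G_x(z)$ against $\kappa_W$'s $G_x(y)$, so $\kappa_V<\kappa_W$ lexicographically and $V\prec W$. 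The real obstacle is the geometric bookkeeping in the second case: showing that $D_1\cup D_2$, biased so that $e e_1 e_2$ comes first, is a $1$-typed domino contained in $\alpha$, because the stability conclusion, the stable triplet, and the matching position in the Peifer vector where $\kappa_V$ drops all hinge on this identification being made with the correct orientation.
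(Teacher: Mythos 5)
Your proposal is correct and follows essentially the same route as the paper's own proof: rule out the first type of thick configuration via non-shortability, recognize $D_1\cup D_2$ as a $1$-typed domino contained in $\alpha$, invoke well-positionedness to get stability and hence a stable triplet $(x,y,z)$, and rewrite the two letters of $\partial D_2\cap\alpha$ to drop the Peifer vector lexicographically while staying within a $4$-fellow-traveller bound. The only cosmetic difference is that the paper makes explicit the small observation that non-shortability also forces $D_1$ to contribute exactly one outer edge to $\alpha$, which you use implicitly when asserting $\partial N\cap\alpha=e\,e_1\,e_2$.
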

\begin{proof}
Let $N$ be the thick configuration in $M$ that is contained in $\alpha$. Since $W$ is not shortable the thick configuration is of the second type. $N$ has connected interior and consists of two neighboring regions, $D_1$ and $D_2$, such that $D_2$ has two inner edges and two outer edges; see Figure \ref{fig:refSecType}. Since $W$ is not shortable, the region $D_1$ has one outer edge in $\alpha$. Assume that $\partial D_2 = \delta\rho^{-1}$ where $\delta = \partial D_2 \cap \alpha$ is the outer boundary of $D_2$ and $\rho$ is the inner boundary. Let $v_1$ the boundary vertex that belong to both $\partial D_1$ and $\partial D_2$. Then, $v_1$ is of valence three. Let $e_1^{-1}$, $e_2$, and $e_3$ be the three edges which emanate from $v_1$: $e_1$ is an edge of $D_1$, $e_2$ is an inner edge, and $e_3$ is an edge of $D_2$. Let $v_2$ be the other vertex of $e_1$. By hypothesis ${\cal{H}}_1$ every relator is of length four so we get that $N$ is a domino. We make $N$ biased through the boundary cycle that begins in the edge $e_1$. In this case $N$ is $1$-typed and the domino $N$ is contained in $\alpha$ (the three outer edges of $N$ are contained in $\alpha$ and the last edge of $\partial N \setminus \alpha$ is an inner edge). Since by assumption $N$ is well-positioned we get that $N$ is stable (see Part \ref{def:wellPosDominos:2} of Definition \ref{def:wellPosDominos}).

Now, assume that $\mu$ is labelled by $xya$ and that $\rho$ is labelled by $zb$ ($x$, $y$, $z$, $a$, and $b$ are elements of $\pmX$). Decompose $W$ as $W = W_1 xya W_2$ and let $U = W_1 xzb W_2$. Notice that $ya = zb$ in $G$ (because $yab^{-1}z^{-1}$ is the boundary label of $D_2$) so $U = W$ in $G$. Notice also that $U$ and $W$ are $4$-fellow-travellers (Observation \ref{obs:CheckingFeloTrvlProp}). Since $N$ is a stable domino we get that $(x,y,z)$ is a stable triplet (Definition \ref{def:stableTriplet}). Hence by the definition of a grading function (Definition \ref{def:gradingFunc}) we get that $G_x(z) < G_x(y)$. Consequently, we get that the $|W_1|+1$-th coordinate for $\kappa_U$ is strictly less than the $|W_1|+1$-th coordinate for $\kappa_W$. The coordinates before the $(|W_1|+1)$-th one in $\kappa_W$ and in $\kappa_U$ are the same. Hence, we established that $U \prec W$ (because $\kappa_U$ precedes $\kappa_W$ in lexicographical order). Consequently, $U$ $4$-refute $W$.
\end{proof}

\begin{figure}[ht]
\centering
\includegraphics[totalheight=0.15\textheight]{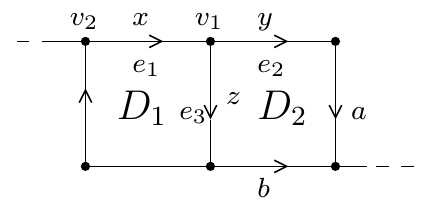}
\caption{Refuting through a thick configuration of second type}\label{fig:refSecType}
\end{figure}

The following proposition, whose proof is postponed to Sub-Section \ref{ssec:diagDomInTopPos}, shows that we can usually assume that all dominos contained in a path are well-positioned.

\begin{proposition} \label{prop:diagDomInTopPos}
Let $W,U \in \ws{X}$ and $x,y \in \pmX \cup \Set{\eps}$ such that $xW = Uy$ in $G$ and assume that $W$ and $U$ are freely-reduced and not shortable. Then, there is a minimal diagram $M$ with boundary cycle $\sigma \alpha \tau^{-1} \beta^{-1}$ such that $\alpha$ is labelled by $W$, $\beta$ is labelled by $U$, $\sigma$ is labelled by $x$, and $\tau$ is labelled by $y$ ($\sigma$ is empty if $x=\eps$ and similarly $\tau$ is empty if $y=\eps$) such that all dominos contained in $\alpha$ and in $\beta$ are well-positioned.
\end{proposition}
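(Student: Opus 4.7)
The plan is to start from any minimal van Kampen diagram for the prescribed boundary and, through a sequence of local surgeries on individual dominos, transform it into a minimal diagram in which every domino contained in $\alpha$ or $\beta$ is well-positioned. The existence of a starting diagram follows from van Kampen's theorem applied to the trivial word $xWy^{-1}U^{-1} = 1$ in $G$; the boundary cycle of such a minimal diagram decomposes as $\sigma\alpha\tau^{-1}\beta^{-1}$ with the prescribed labels, and by hypothesis $({\cal H}_2)$ it is automatically a $C(4) \& T(4)$ diagram.

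Being well-positioned fails only for $1$-typed unstable dominos (Definition \ref{def:wellPosDominos}), so the task reduces to eliminating such bad dominos from $\alpha \cup \beta$. I would choose, among all minimal diagrams with the prescribed boundary, one that minimizes a lexicographic badness potential $(n(M), s(M))$, where $n(M)$ is the number of $1$-typed unstable dominos contained in $\alpha \cup \beta$ and $s(M)$ is an auxiliary sum of grading values $G_x(z_N)$ of their chord labels, read in a fixed order along $\alpha$ then $\beta$. The set of candidate diagrams is finite up to equivalence, so a minimizer $M$ exists, and the claim is $n(M) = 0$.

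Suppose for contradiction that $N$ is a $1$-typed unstable domino contained in $\alpha$. Instability yields a non-equivalent alternative $N^*$ with the same outer label, and replacing the interior of $N$ in $M$ by that of $N^*$ produces a new diagram $M'$ with the same boundary and the same region count, hence still minimal and, by $({\cal H}_2)$, still $C(4) \& T(4)$. The geometry of $N^*$ agrees with $N$ on the outer boundary, so $N^*$ is again contained in $\alpha$. If $N^*$ is $2$-typed or $3$-typed, then $N^*$ is automatically well-positioned and $n$ strictly drops, contradicting the choice of $M$. Otherwise every alternative is $1$-typed; in that case I would select the alternative whose chord label is $<_x$-minimal (using Corollary \ref{cor:linOrder}) and argue that the swap strictly reduces $s$, again contradicting the choice of $M$.

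The main obstacle I expect is precisely this last sub-case. In the algebraic $C(4) \& T(4)$ setting, two non-equivalent $1$-typed dominos with the same outer boundary would share a prefix of length three, violating the algebraic $C(4)$ condition; in the purely geometric setting of $({\cal H}_2)$ this cannot be excluded. One must therefore design the auxiliary potential so that every $1$-typed-to-$1$-typed swap strictly decreases it, which crucially relies on the strict totality of $<_x$ on $\pmX$ (guaranteed by Proposition \ref{prop:StabGrphNoCycle}) and on the finiteness of the chord-label candidates. A minor subtlety is that the surgery, being confined to the interior of $N$, does not alter any cell outside $N$, so no new $1$-typed unstable dominos can be created elsewhere along $\alpha \cup \beta$; the hypothesis that $W$ and $U$ are not shortable plays a supporting role, ruling out competing sub-diagrammatic modifications that would shorten the boundary words.
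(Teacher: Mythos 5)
The decisive step of your argument --- ``the surgery, being confined to the interior of $N$, does not alter any cell outside $N$, so no new $1$-typed unstable dominos can be created elsewhere'' --- is false, and it is precisely the difficulty the paper's proof is built around. A domino contained in $\alpha$ is determined not only by its two regions but by the valences of the vertices along $\partial N$, and the surgery moves the inner chord of $N$ from one boundary vertex of $\mu$ to another. Concretely, if $N=D_1\cup D_2$ is $1$-typed then $D_1$ has one outer edge in $\alpha$; after replacing $N$ by a $2$-typed $N'=D_1'\cup D_2'$, the region $D_1'$ has two outer edges, so $D_1'$ together with the region $D_0$ immediately to its left may now form a \emph{new} domino contained in $\alpha$ (and, when $\mu$ is a prefix of $\alpha$ and $\sigma$ is empty, a new domino at the start of $\beta$ as well). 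Such a new domino can itself be $1$-typed and unstable, so your count $n(M)$ need not drop; and your tie-breaker $s(M)$, a sum of grading values of chord labels, is not shown to decrease --- the grading functions only compare the two chords of a \emph{stable} triplet, which gives no control over the chord label of a freshly created \emph{unstable} domino. The paper's proof confronts exactly this overlap phenomenon: Lemma \ref{lem:noOverlapInSamePath} shows the replacement creates no new domino to the \emph{right}, Lemma \ref{lem:noOverlapInDiffPaths} constrains how dominos in $\alpha$ and $\beta$ can overlap (only at the common initial region when both are prefixes), and Lemma \ref{lem:interMidStep} then runs an induction that sweeps from right to left, so that any newly created bad domino necessarily contains the most recently added edge and is repaired at the next step.

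Your second concern --- the ``$1$-typed to $1$-typed swap'' --- is not actually an issue under the paper's definitions, and your proposed fix via $<_x$-minimal chord labels addresses a non-problem. By Definition \ref{def:stableDomino}, instability of $N$ means there is an alternative domino of a \emph{different type} with the same boundary label; since $W$ is not shortable, the alternative cannot be $3$-typed (three consecutive outer edges of one region would make $\alpha$ shortable in the minimal diagram $M'$), so it must be $2$-typed, hence automatically well-positioned. This observation, rather than any ordering of chord labels, is what the paper uses; the linear order $<_x$ and Proposition \ref{prop:StabGrphNoCycle} enter only later, in Proposition \ref{prop:thickConfToRefute}, to refute words via stable dominos, not to well-position them here.
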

\begin{proof}
See Sub-Section \ref{ssec:diagDomInTopPos}.
\end{proof}

Using Proposition \ref{prop:diagDomInTopPos} we next show how to refute non-``$\prec$''-minimal elements.

\begin{lemma} \label{lem:nonMinCanRef}
If $W$ is not ``$\prec$''-minimal then we can $6$-refute $W$.
\end{lemma}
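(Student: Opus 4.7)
The plan is to first dispose of the easy case and then reduce the hard case to an application of thin-diagram analysis. If $W$ is either not freely reduced or shortable, then Lemma \ref{lem:shortableRefute} immediately supplies a $6$-fellow-traveler $V$ with $|V|<|W|$ and $V=W$ in $G$; since the order $\prec$ places shorter words before longer ones, $V\prec W$, so $V$ $6$-refutes $W$.

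The remaining case is when $W$ is freely reduced and non-shortable. Since $W$ is not $\prec$-minimal, the set $\{V\in\ws{X}\mid V=W\text{ in }G\}$ is non-empty, and I pick $U$ to be its $\prec$-minimal element (which exists because $\prec$ is well-founded on non-empty subsets, as also required by Proposition \ref{prop:falseKFT}). Then $U\prec W$ and $U=W$ in $G$, and by the minimality of $U$ combined with Lemma \ref{lem:shortableRefute} applied to $U$, the word $U$ is itself freely reduced and non-shortable.

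Apply Proposition \ref{prop:diagDomInTopPos} to $W$ and $U$ (with $x=y=\eps$): this yields a minimal diagram $M$ with boundary cycle $\alpha\beta^{-1}$, where $\alpha$ is labelled by $W$, $\beta$ by $U$, and all dominos contained in $\alpha$ or in $\beta$ are well-positioned. I split on whether these paths contain thick configurations. If $\beta$ contains a thick configuration, Proposition \ref{prop:thickConfToRefute} (applied with the roles of $\alpha$ and $\beta$ swapped, using that $U$ is non-shortable) produces a $4$-refutation of $U$, contradicting the $\prec$-minimality of $U$; hence $\beta$ contains no thick configuration. If $\alpha$ contains a thick configuration, the same proposition $4$-refutes $W$ directly, and we are done since $4\leq 6$.

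Otherwise neither boundary path contains a thick configuration, so Theorem \ref{thm:noThickToThinDiag} makes $M$ an $(\alpha,\beta)$-thin diagram. Lemma \ref{lem:thinDiagForKFT}, applied with $k=4$ (the maximal relator length, by hypothesis ${\cal H}_1$), offers three alternatives: either $W$ and $U$ are $4$-fellow-travellers, in which case the already chosen $U$ $4$-refutes $W$; or there is a shorter $W'$ that $4$-fellow-travels $W$ and equals $W$ in $G$, which refutes $W$ outright since shorter implies $\prec$-smaller; or there is a shorter $U'$ $4$-fellow-travelling $U$ with $U'=U$ in $G$, which would give $U'\prec U$ and $U'=W$ in $G$, contradicting the minimality of $U$. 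In every surviving case we obtain a $6$-refutation of $W$. The main structural point, and the only real subtlety, is arranging the two-sided minimality that lets us exclude the third Lemma~\ref{lem:thinDiagForKFT} alternative; choosing $U$ to be $\prec$-minimal among all representatives of $\overline{W}$ is precisely what accomplishes this.
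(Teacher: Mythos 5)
Your proof is correct and follows essentially the same route as the paper: dispose of shortable/non-reduced words via Lemma \ref{lem:shortableRefute}, pick a $\prec$-minimal representative $U$, build the well-positioned diagram via Proposition \ref{prop:diagDomInTopPos}, rule out thick configurations via Proposition \ref{prop:thickConfToRefute}, and finish with Theorem \ref{thm:noThickToThinDiag} and Lemma \ref{lem:thinDiagForKFT}. If anything, you are slightly more careful than the paper, which silently drops the third alternative of Lemma \ref{lem:thinDiagForKFT}; your observation that a shorter $U'$ would contradict the $\prec$-minimality of $U$ is exactly the right way to exclude it.
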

\begin{proof}
By Lemma \ref{lem:shortableRefute} we can assume that $W$ is not shortable and freely-reduced. Let $U$ be ``$\prec$''-minimal such that $W=U$ in $G$. Again by Lemma \ref{lem:shortableRefute}, since $U$ is ``$\prec$''-minimal it is not shortable and freely-reduced. Thus, by Proposition \ref{prop:diagDomInTopPos} there is a minimal diagram $M$ over $\P$ with boundary cycle $\alpha\beta^{-1}$ such that $\alpha$ is labelled by $W$, $\beta$ is labelled by $U$, and all dominos in both paths are well-positioned. By Proposition \ref{prop:thickConfToRefute} the path $\beta$ does not contain thick configurations (since $U$ is ``$\prec$''-minimal). If the path $\alpha$ contain a thick configuration then by Proposition \ref{prop:thickConfToRefute} we can $4$-refute $W$. Thus, we can assume that also the path $\alpha$ does not contain thick configurations. It follows therefore by Theorem \ref{thm:noThickToThinDiag} that $M$ is an $(\alpha,\beta)$-thin-diagram. By Lemma \ref{lem:thinDiagForKFT} either $W$ and $U$ are $4$-fellow-travellers or there is $W' \in \ws{X}$ such that $|W'|<|W|$, $W' = W$ in $G$, and $W'$ and $W$ are $4$-fellow-traveller. In the first case $U$ $4$-refute $W$ and in the second case $W'$ $4$-refute $W$ (since $|W'|<|W|$ implies that $W' \prec W$). Thus, $W$ can be $6$-refuted.
\end{proof}

To satisfy the conditions of Proposition \ref{prop:falseKFT} we need a regular order where each non-minimal element can be refuted and such that there is a minimal element for every non-empty subset of $\ws{X}$. These requirements are satisfied for the order ``$\prec$'': by Lemma \ref{lem:precRegular} the order ``$\prec$'' is regular; by Lemma \ref{lem:nonMinCanRef} each non-minimal element can be refuted; and, since lexicographical order is a well-order each non-empty subset of $\ws{X}$ has a minimal element. Consequently we get the following result:

\begin{proposition} \label{prop:precMinIsReg}
Let $L$ be the set of ``$\prec$''-minimal elements. Then, $L$ is regular and maps onto $G$ through the natural map.
\end{proposition}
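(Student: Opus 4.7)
The plan is to read the conclusion off Proposition \ref{prop:falseKFT} by checking that all three of its hypotheses hold for the order ``$\prec$'' on $\ws{X}$ and the constant $k=6$. The group $G$ is finitely generated by $X$ by standing assumption, so that part of the set-up is free, and the target language $L$ in Proposition \ref{prop:falseKFT} is exactly the set of ``$\prec$''-minimal elements that we want to identify.

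First I would verify that ``$\prec$'' is a regular order; this is precisely the content of Lemma \ref{lem:precRegular}. Second, I would verify that every non-empty subset of $\ws{X}$ has a ``$\prec$''-minimal element. This follows because ``$\prec$'' is defined lexicographically on pairs $(\kappa_W, W)$ in $\mathbb{N}^* \times \pmX^*$, and lexicographic order on finite tuples over a well-ordered set (after bounding by length, which ``$\prec$'' does implicitly through the length of $\kappa_W$) is itself a well-order; alternatively, the length-first structure of $\kappa_W$ means shorter words always precede longer ones, so restricting to any fixed length the order becomes a lexicographic well-order on a finite alphabet.

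Third---and this is the substantive input---I would invoke Lemma \ref{lem:nonMinCanRef}, which says that every non-``$\prec$''-minimal word $W \in \ws{X}$ admits a $V \in \ws{X}$ with $V \prec W$, $V = W$ in $G$, and $V, W$ $6$-fellow-travelers. This gives the uniform constant $k=6$ that Proposition \ref{prop:falseKFT} requires. With the three hypotheses in place, Proposition \ref{prop:falseKFT} immediately yields that the set of ``$\prec$''-minimal elements is a regular language and surjects onto $G$ under the natural map, which is exactly the statement to be proved.

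I do not expect any serious obstacle here: all of the difficult work has been absorbed into Lemmas \ref{lem:precRegular} and \ref{lem:nonMinCanRef} and into Proposition \ref{prop:falseKFT}. The only point that might deserve a line of explanation is why ``$\prec$'' is a well-order on subsets of $\ws{X}$, and this reduces to the observation that $\kappa_W$ begins with the length of $W$ (in the sense that $\len{\kappa_W}=\len{W}$) so that descent in ``$\prec$'' cannot continue indefinitely.
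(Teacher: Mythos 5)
Your proposal is correct and follows essentially the same route as the paper, which likewise just checks the three hypotheses of Proposition \ref{prop:falseKFT} by citing Lemma \ref{lem:precRegular} for regularity, Lemma \ref{lem:nonMinCanRef} for the $6$-refutation of non-minimal words, and the well-ordering of the (length-first) lexicographic order. If anything, your justification of the well-order condition is a touch more careful than the paper's one-line remark, since it makes explicit that $\len{\kappa_W}=\len{W}$ forces descending chains to terminate.
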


We are now ready to complete the proof of Theorem \ref{thm:mainThmC4T4}.

\begin{proof}[Proof of Theorem \ref{thm:mainThmC4T4}]
Let $L$ be the set of ``$\prec$''-minimal elements. It follows from Proposition \ref{prop:precMinIsReg} that $L$ is a regular set and maps onto $G$ though the natural map. To complete the proof of the theorem by showing that $L$ has the fellow-traveller property (see Definition \ref{def:FellowTraveller}) and thus it is a bi-automatic structure of $G$. Let $W$ and $U$ be two elements of $L$ and assume that there are $x,y \in \pmX \cup \Set{\eps}$ such that $xW = Uy$ in $G$. By Proposition \ref{prop:diagDomInTopPos} there is a minimal diagram $M$ with boundary cycle $\sigma\alpha\tau^{-1}\beta$ such that $\alpha$ is labelled by $W$, $\beta$ is labelled by $U$, $\sigma$ is labelled by $x$, $\tau$ is labelled by $y$, and all dominos in $\alpha$ and in $\beta$ are well-positioned. By Proposition \ref{prop:thickConfToRefute} there are no thick configuration in $\alpha$ or in $\beta$ and thus by Theorem \ref{thm:noThickToThinDiag} the diagram $M$ is $(\sigma\alpha,\beta\tau)$-thin diagram. Now, since $W$ and $U$ are ``$\prec$''-minimal they are geodesics and consequently the conditions of Lemma \ref{lem:husdDist} are satisfied where the constant $s$ can be taken to be $4$ (see Lemma \ref{lem:thinDiagToHaudDist}). Consequently, $xW$ and $Uy$ are $9$-fellow-travellers. This show that $L$ has the fellow-traveller property and the proof of the theorem is completed.
\end{proof}

\subsection{Well-Positioning dominos \label{ssec:diagDomInTopPos}}

In this sub-section we prove Proposition \ref{prop:diagDomInTopPos}. The reader may want to recall the definition of a domino and a biased domino (Definition \ref{def:domino}) and also the definition of a stable domino (Definition \ref{def:stableDomino}). Recall that we are given $W,U \in \ws{X}$ and $x,y \in \pmX \cup \Set{\eps}$ such that $xW = Uy$ in $G$ and the assumption is that $W$ and $U$ are freely-reduced and not shortable (Definition \ref{def:shortable}). We need to show that there is a minimal diagram $M$ with boundary cycle $\sigma \alpha \tau^{-1} \beta^{-1}$ such that $\alpha$ is labelled by $W$, $\beta$ is labelled by $U$, $\sigma$ is labelled by $x$, and $\tau$ is labelled by $y$ ($\sigma$ is empty of $x=\eps$ and similarly $\tau$ is empty if $y=\eps$) such that all dominos contained in $\alpha$ and in $\beta$ are well-positioned. We repeat two of the needed definitions here:
\begin{enumerate}
	\item Domino contained in a path (Definition \ref{def:contInPath}): a domino $M$ is contained in a path $\alpha$ if $\mu=\partial M \cap \alpha$ contains exactly half of $\partial M$ and the next edge in $\partial M$ after $\mu$ is an inner edge.
	\item Well-positioned domino (Definition \ref{def:wellPosDominos}): a domino $M$ is well-positioned in a path $\alpha$ if it is contained in $\alpha$ (this automatically makes $M$ a biased domino by the boundary cycle that start with $\alpha \cap \partial M$) and it is 1-typed (see definition \ref{def:domino}) only when it is stable (see definition \ref{def:stableDomino}).
\end{enumerate}

Naively, one would take any minimal diagram with a needed boundary label and simply change the interior of dominos which are not well-positioned. However, this approach is problematic since there may be overlapping dominos (so, fixing one domino may introduce other dominos which are not well-positioned). Thus, we start with an analysis of how two dominos can overlap.

We remark that if $N$ is a domino that is contained in a path $\alpha$ and the path is not shortable then the domino cannot be $3$-typed (because you cannot have three consecutive edges of a single region in $\alpha$). Also, if a domino $N$ that is contained in a path $\alpha$ is not well-positioned then it is $1$-typed and unstable. It follows that if we assume that the label of the path $\alpha$ is not shortable then the domino $N$ can be only be replaced with a domino $N'$ which is $2$-typed.

Using the notation of Definition \ref{def:contInPath}, if $N$ is a domino that is contained in $\alpha$ (resp., in $\beta$) then it make sense to order the regions in $N$. The region which its boundary contains the first edge of $\mu$ will be called the \emph{first region} and the other region (the one that its boundary contains the last edge of $\sigma$) will be called the \emph{second region}.

\begin{lemma} \label{lem:noOverlapInDiffPaths}
Let $M$ be a minimal diagram over $\P$ with boundary label $\sigma\alpha\tau^{-1}\beta^{-1}$, connected interior, and no spikes. Assume that the paths $\alpha$ and $\beta$ are not shortable. Let $N_1$ and $N_2$ be two domino sub-diagrams of $M$ which are contained in $\alpha$ and $\beta$, respectively. Assume that $\partial N_1 = \mu_1 \rho_1^{-1}$ and $\partial N_2 = \mu_2 \rho_2^{-1}$ where $\mu_1 = \alpha \cap \partial N_1$ and $\mu_2 = \beta \cap \partial N_2$. If the interior of $N_1$ and $N_2$ are not disjoint then the following hold:
\begin{enumerate}
	\item $\mu_1$ is a prefix of $\alpha$ and $\mu_2$ is a prefix of $\beta$.
	\item $\sigma$ is an empty path.
	\item The first region of $N_1$ and the first region of $N_2$ are equal.
\end{enumerate}
\end{lemma}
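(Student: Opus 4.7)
The plan is to analyze how the two dominos $N_1$ and $N_2$ can share regions, and to show that only one configuration is consistent with the hypotheses. Since the interiors overlap, $N_1$ and $N_2$ share at least one region; first I would verify that they cannot share both. Indeed, if they did then $\partial N_1 = \partial N_2$ as a six-edge cycle, and since $\mu_1 \subset \alpha$ and $\mu_2 \subset \beta$ lie on edge-disjoint arcs of $\partial M$, the six boundary edges must split as $\mu_1 \cup \mu_2$; this forces $\rho_1^{-1}$ to coincide with $\mu_2$ as an edge path, so the last edge of $\rho_1$ lies on $\beta$ rather than being inner, contradicting Definition \ref{def:contInPath}.

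With $N_1$ and $N_2$ sharing exactly one region $D$, I would use the fact that $D$ has only four edges (by hypothesis $\mathcal{H}_1$) to constrain the configuration. The region $D$ contributes some edges to $\mu_1$ (as a region of $N_1$) and some to $\mu_2$ (as a region of $N_2$), plus it carries two distinct inner edges shared with the other regions of $N_1$ and $N_2$ respectively (these two inner edges are distinct because the two ``other'' regions differ, otherwise $N_1=N_2$). A direct counting argument then shows $D$ contributes exactly one edge to each of $\mu_1$ and $\mu_2$. Each $N_i$ is either $1$-typed or $2$-typed ($3$-typed is excluded because the corresponding path would then be shortable via the single region carrying all of $\mu_i$), and a case analysis on which region of each domino $D$ coincides with leaves four configurations to consider.

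The main case is when $D$ is the first region of both $N_1$ and $N_2$ and both dominos are $1$-typed: the two shared inner edges of $D$ are adjacent on $\partial D$ to the $\alpha$-edge and the $\beta$-edge of $D$ respectively, leaving two possible cyclic arrangements of the four edges around $\partial D$. A planarity argument using the direction of $\mu_2$ relative to $D$'s interior rules out the arrangement in which the $\alpha$-edge and $\beta$-edge sit opposite each other. In the surviving ``adjacent'' arrangement these two edges share a vertex, which lies on both $\alpha$ and $\beta$ and hence is a corner of $M$; tracing the orientations of $\mu_1$ and $\mu_2$ (and using that $|\mu_i| = 3$, so $\mu_i$ cannot start at the terminal end of its ambient path) forces this corner to be $i(\alpha) = i(\beta)$, yielding all three conclusions of the lemma. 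The main obstacle will be ruling out the remaining ``crossover'' configurations, where $D$ is the second region of one domino and first or second of the other; these must be excluded by showing that each such configuration would produce a sub-diagram of at most two regions realising a strict excess of $\alpha$- or $\beta$-edges over the complementary part of its boundary, contradicting the non-shortability hypothesis on $\alpha$ and $\beta$.
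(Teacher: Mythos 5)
Your overall strategy is the same as the paper's: rule out $N_1=N_2$ via the inner-edge clause of Definition \ref{def:contInPath}, then case-analyse according to which region of $N_1$ coincides with which region of $N_2$, eliminate the ``crossover'' cases, and extract the three conclusions from the surviving case in which the shared region is the first region of both dominos. Your opening step and your treatment of the main case match the paper's argument, and your counting observation (the shared region contributes exactly one edge to each of $\mu_1$ and $\mu_2$, since otherwise one of the complementary regions would carry all three edges of $\mu_i$ and make that path shortable) is a sound way to set up the case analysis.

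There is, however, a concrete gap in your plan for the crossover cases. You propose to exclude them all by exhibiting a sub-diagram of at most two regions with a strict excess of $\alpha$- or $\beta$-edges, i.e.\ by non-shortability. This cannot work for the configuration in which the shared region $D$ is the \emph{second} region of both $N_1$ and $N_2$. In that configuration $D$ carries one edge of $\mu_1$, one edge of $\mu_2$, and the two inner edges $f_1=\partial D\cap\partial D_1$ and $f_2=\partial D\cap\partial E_1$, while $D_1$ carries two edges of $\mu_1$ and $E_1$ carries two edges of $\mu_2$. Every connected sub-diagram with at most two regions then satisfies $|\mu|\leq|\sigma|$: the singletons give $2$ versus $2$ (or less), and the only two-region sub-diagrams with connected interior are $N_1$ and $N_2$ themselves, each with $3$ versus $3$. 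So there is no strict excess and shortability yields no contradiction. The paper excludes this case by a different mechanism, namely the third clause of Definition \ref{def:contInPath}: in this configuration the last edges of $\rho_1$ and $\rho_2$ (the edges of $\partial N_i$ adjacent to $t(\mu_i)$) cannot both be inner edges of $M$, so at least one of $N_1$, $N_2$ fails to be \emph{contained} in its path. Likewise, for the mixed case (second region of one domino equals first region of the other) the paper's argument is a dichotomy --- ``either $\alpha$ is shortable or the first edge of $\mu_2$ is an inner edge'' --- so again the containment condition, not shortability alone, does part of the work. You have the right skeleton, but you need to bring Definition \ref{def:contInPath} into the elimination of the crossover cases rather than relying solely on Definition \ref{def:shortable}.
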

\begin{proof}
Let $D_1$ and $D_2$ be the first and second regions of $N_1$. Similarly, let $E_1$ and $E_2$ be the first and second regions of $N_2$. First notice that we cannot have that $N_1 = N_2$ since then the last edges of $\rho_1$ and $\rho_2$ (which in this case are equal to $\mu_2$ and $\mu_1$, respectively) will not be inner edges of $M$. The case that $D_2 = E_1$ is impossible, as illustrated in Figure \ref{fig:overlapDominos}(a) and Figure \ref{fig:overlapDominos}(b), since either $\alpha$ is shortable or the first edge of $\mu_2$ is an inner edge. Similarly, we cannot have that $D_1 = E_2$. Note that in general, since both $\alpha$ and $\beta$ are not shortable we cannot have that $\mu_1$ or $\mu_2$ would be the outer boundary of a single region (like $\mu_1$ in Figure \ref{fig:overlapDominos}(a)). The case that $D_2 = E_2$ is also impossible since the last edge of $\rho_1$ and the last edge of $\rho_2$ are not inner edges of $M$; see Figure \ref{fig:overlapDominos}(c). Thus, we are left with the case that $D_1 = E_1$ and $D_2 \neq E_2$, as illustrated in Figure \ref{fig:overlapDominos}(d). This also shows that the initial vertices of $\mu_1$ and $\mu_2$ are equal, i.e., $i(\mu_1)=i(\mu_2)$. Consequently, using the assumption that $M$ has connected interior and no spikes, we get that $\sigma$ is empty, $\mu_1$ is a prefix of $\alpha$, and $\mu_2$ is a prefix of $\beta$.

\begin{figure}[ht]
\centering
\includegraphics[totalheight=0.15\textheight]{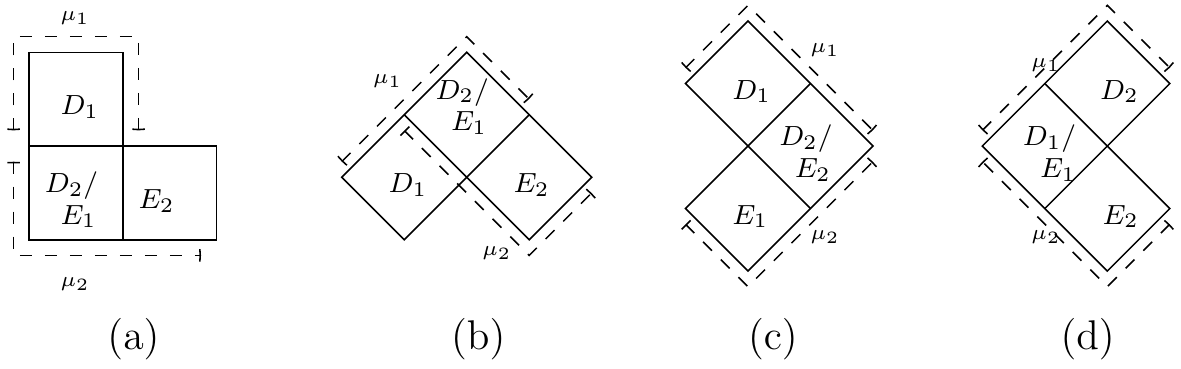}
\caption{Possible overlaps between dominos}\label{fig:overlapDominos}
\end{figure}

\end{proof}

Next lemma states that if we fix an unstable $1$-typed domino $N$ then we never form a new domino to the \emph{right} of $N$ (where we think on $\alpha$ as going from left to right).

\begin{lemma} \label{lem:noOverlapInSamePath}
Let $M$ be a minimal diagram over $\P$ with boundary label $\alpha\beta^{-1}$, connected interior, and no spikes. Assume that the path $\alpha$ and its label are not shortable. Let $N$ be a domino sub-diagram of $M$ which is contained in $\alpha$. Assume further that $\partial N = \mu \sigma^{-1}$ where $\mu$ is a sub-path of $\alpha$. Finally, assume that $N$ is biased through $\mu \sigma^{-1}$, that $N$ is $1$-typed, and that $N$ is not stable. Denote the first and second regions of $N$ by $D_1$ and $D_2$, respectively. Let $D_3$ be a boundary region which its boundary contain the terminal vertex $t(\mu)$. Since $N$ is not stable there is a domino $N'$ with boundary cycle $\partial N' = \mu'(\sigma')^{-1}$ which has the same label as the label of $\mu \sigma^{-1}$ but $\mu'$ is $2$-typed. Suppose we form the diagram $M'$ by replacing $N$ with $N'$ in the diagram $M$. Then, the region $D_3$ is not the last region of a domino in $M'$ which is contained in $\alpha$.
\end{lemma}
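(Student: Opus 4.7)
I would argue by contradiction: suppose $N''$ is a domino in $M'$ which is contained in $\alpha$ and has $D_3$ as its last region. Let $e_1, e_2, e_3$ denote the three edges of $\mu$ in order, and let $e_4$ be the edge of $\alpha$ immediately after $\mu$.

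The first step is to pin down $\partial D_3 \cap \alpha$. By Definition~\ref{def:contInPath}, the last edge of $\sigma$ at $t(\mu)$ is an inner edge of $M$, and $D_3$ is the boundary region on its $\alpha$-side, so $e_4$ lies on $\partial D_3$. I claim $\partial D_3 \cap \alpha = \{e_4\}$. Indeed, $D_3$ is a quadrilateral by $({\cal{H}}_1)$, and since $\alpha$ is not shortable no region can carry three consecutive $\alpha$-edges; so a second $\alpha$-edge of $D_3$ would have to be the edge $e_5$ of $\alpha$ just after $e_4$. But then the two-region sub-diagram $\{D_2, D_3\}$ of $M$ --- whose interior is connected because $D_2$ and $D_3$ share the inner edge of $N$ at $t(\mu)$ --- would carry the four consecutive $\alpha$-edges $e_2 e_3 e_4 e_5$ on its boundary, bounded by only two further edges, witnessing shortability of $\alpha$ and contradicting the hypothesis.

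With $\partial D_3 \cap \alpha = \{e_4\}$ established, note that $e_4 \in \partial D_3 \cap \alpha \subseteq \partial N'' \cap \alpha = \mu''$, so $\mu''$ is a length-three sub-path of $\alpha$ containing $e_4$. The only position compatible with $D_3$ being the last region of $N''$ is $\mu'' = e_2 e_3 e_4$: the alternatives $\mu'' = e_3 e_4 e_5$ or $\mu'' = e_4 e_5 e_6$ either fail to place $D_3$'s single $\alpha$-edge $e_4$ so that the last edge of $\sigma''$ at $t(\mu'')$ lies on $D_3$, or force the three $\mu''$-edges onto three distinct regions of $M'$. For $\mu'' = e_2 e_3 e_4$, the $2$-typed structure of $N'$ places $e_2$ on $D_1'$ and $e_3$ on $D_2'$ in $M'$, while $e_4$ remains on $D_3$; these are three pairwise distinct regions, and each must be a region of $N''$ since each edge is an outer edge of $M'$ lying on $\partial N''$. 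This forces $N''$ to contain at least three regions, contradicting the fact that a domino has exactly two.

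The main obstacle I anticipate is the case where $t(\mu)$ has valence strictly greater than three in $M$, so that $D_2$ and $D_3$ do not share an inner edge and the sub-diagram $\{D_2, D_3\}$ used above is not connected. In that situation I would pick up the intervening inner or boundary regions at $t(\mu)$ and use $({\cal{H}}_2)$ to control their structure, producing a different two-region sub-diagram witnessing shortability of $\alpha$. The rest of the argument is robust to this variation, since the final contradiction rests only on the $2$-typed structure of $N'$ forcing $e_2$ and $e_3$ onto the distinct regions $D_1'$ and $D_2'$ of $M'$.
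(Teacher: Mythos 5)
Your proof is correct and is essentially the paper's argument with the steps permuted: the paper assumes the domino with last region $D_3$ exists, deduces from the $2$-typed structure of $N'$ that $|\partial D_2' \cap \alpha| = 1$ and hence $|\partial D_3 \cap \alpha| = 2$, and then contradicts non-shortability via the two-region sub-diagram $D_2 \cup D_3$ of $M$, while you run that same shortability computation first (in contrapositive form, to pin down $\partial D_3 \cap \alpha = \{e_4\}$) and phrase the final contradiction as the domino needing the three distinct regions $D_1'$, $D_2'$, $D_3$. Both versions rest on the identical pair of facts ($D_2$ carries two $\alpha$-edges because $N$ is $1$-typed; $D_2'$ carries only one because $N'$ is $2$-typed), and both treat the valence of $t(\mu)$ with the same informality, so the ``obstacle'' you flag is no worse than what the paper's own proof assumes.
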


\begin{figure}[ht]
\centering
\includegraphics[totalheight=0.10\textheight]{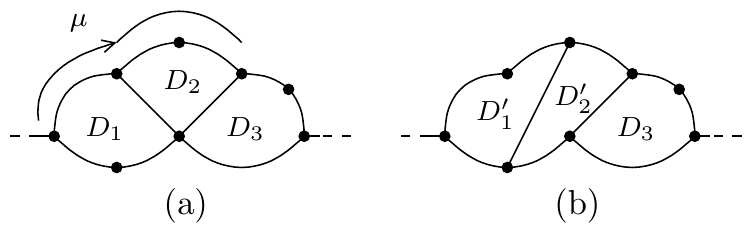}
\caption{An overlaps between dominos in the same path}\label{fig:overlapDominos2}
\end{figure}

\begin{proof}
First notice that since $N$ is $1$-typed we have that $D_2$ has two outer edges in $\alpha$, i.e., $|\partial D_2 \cap \alpha | = 2$. Suppose by contradiction that in the diagram $M'$ the region $D_3$ is the last region of a domino in $\alpha$ (in this case the terminal vertex $t(\mu)$ is of valence three). Let $D_1'$ and $D_2'$ be the regions of $N'$; see Figure \ref{fig:overlapDominos2}(b). Then, the domino that $D_3$ is its last region consists of the regions $D_2'$ and $D_3$. Since $N'$ is $2$-typed we get that $|\partial D_2' \cap \alpha | = 1$ so by the fact that $D_2'$ and $D_3$ form a domino in $M'$ that is contained in $\alpha$ we get that $|\partial D_3 \cap \alpha | = 2$. This shows that $|\partial (D_2 \cup D_3) \cap \alpha | = 4$ (see Figure \ref{fig:overlapDominos2}(a)) and consequently $\alpha$ is shortable in contradiction to our assumptions.
\end{proof}

Using the notation of Lemma \ref{lem:noOverlapInSamePath}, if $\alpha = \alpha_1 \mu \alpha_2$ then after replacing $N$ with $N'$ there is no domino in the path $\mu' \alpha_2$ (namely, to the right of $N'$) which its interior intersects the interior of $N'$. We use this for the inductive prove of the next lemma, which is as an intermediate step before the final proof of Proposition \ref{prop:diagDomInTopPos}.

\begin{lemma} \label{lem:interMidStep}
Assume the conditions of Proposition \ref{prop:diagDomInTopPos} and suppose that $W=W_1 W_2$ and $U=U_1U_2$ where $|W_2|+|U_2| < |W|+|U|$. Then, there exists a minimal diagram $M$ with boundary cycle $\sigma \alpha \tau^{-1} \beta^{-1}$ such that:
\begin{enumerate}
	\item $\alpha$ is labelled by $W$, $\beta$ is labelled by $U$, $\sigma$ is labelled by $x$, and $\tau$ is labelled by $y$.
	\item The path $\alpha$ decomposes as $\alpha = \alpha_1 \alpha_2$ and $\alpha_i$ is labelled by $W_i$ for $i=1,2$. In addition, all dominos contained in $\alpha_2$ are well-positioned.
	\item The path $\beta$ decomposes as $\beta = \beta_1 \beta_2$ and $\beta_i$ is labelled by $U_i$ for $i=1,2$. In addition, all dominos contained in $\beta_2$ are well-positioned.
\end{enumerate}
\end{lemma}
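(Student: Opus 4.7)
The plan is to proceed by induction on $n := |W_2|+|U_2|$. The base case $n=0$ is vacuous: $\alpha_2$ and $\beta_2$ are trivial, and any minimal diagram with the prescribed boundary (which exists by van Kampen's theorem since $xW=Uy$ in $G$) satisfies the conclusion. For the inductive step I would assume $|W_2|\geq 1$ by symmetry and write $W_2 = x W_2^*$. Applying the inductive hypothesis to the shifted split $(\tilde W_1,\tilde W_2) = (W_1 x, W_2^*)$, which has $|\tilde W_2|+|U_2| = n-1$, yields a minimal diagram $M^*$ with boundary cycle $\sigma\tilde\alpha\tau^{-1}\beta^{-1}$ in which every domino contained in $\tilde\alpha_2$ or in $\beta_2$ is well-positioned.

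Reinterpreting the same diagram under the desired split, let $e$ be the final edge of $\tilde\alpha_1$, so $\alpha_2 = e\cdot\tilde\alpha_2$. Any domino contained in $\alpha_2$ but not in $\tilde\alpha_2$ must have its outer-boundary $\mu$ start with $e$; there is at most one such candidate $N$, namely the one whose $\mu$ is $e$ followed by the first two edges of $\tilde\alpha_2$. If $N$ does not exist or is already well-positioned, set $M := M^*$. Otherwise $N$ is $1$-typed and unstable, and by the definition of instability there is a $2$-typed domino $N'$ with identical labelled boundary; replacing $N$ by $N'$ in $M^*$ yields the candidate diagram $M'$, which is still minimal since the number of regions and the boundary label are unchanged.

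The heart of the argument is to verify that this replacement introduces no new bad dominos in $\tilde\alpha_2$ or in $\beta_2$. For the $\tilde\alpha_2$ side: the first region of $N'$ has its $\alpha$-edges $e$ and $f_1$ (the first edge of $\tilde\alpha_2$), with $e \notin \tilde\alpha_2$, so it cannot lie inside a domino contained in $\tilde\alpha_2$; the second region of $N'$ could only combine along $\alpha$ with the neighboring region $D_3$ at $t(\mu)$ to form such a domino, and Lemma~\ref{lem:noOverlapInSamePath} is precisely the statement that $D_3$ is not the last region of a domino in $M'$ contained in $\alpha$. For the $\beta_2$ side: any new bad domino would have interior intersecting that of $N'$, and Lemma~\ref{lem:noOverlapInDiffPaths} would then force $\mu$ (of $N'$) to be a prefix of $\alpha$, the corresponding $\mu_2$ on the $\beta$-side to be a prefix of $\beta$, and the leading $\sigma$ to be empty. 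But $\mu_2$ a prefix of $\beta$ requires $\beta_1 = \emptyset$, i.e., $|U_1|=0$; combined with $|W_1|+|U_1|\geq 1$ this forces $|W_1|\geq 1$, hence $\alpha_1 \neq \emptyset$ and $\mu$ cannot be a prefix of $\alpha$---a contradiction.

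The main obstacle I anticipate is checking that Lemma~\ref{lem:noOverlapInSamePath} really rules out \emph{every} possible new bad domino in $\tilde\alpha_2$, not merely the pairing of the second region of $N'$ with $D_3$ across the last $\sigma$-edge. Alternative pairings---for example across the second $\sigma$-edge with some external region $D'$---would need to be excluded by combining the non-shortability of $W$ with the local $C(4) \& T(4)$ structure, or by reapplying Lemma~\ref{lem:noOverlapInSamePath} to a suitably chosen boundary region so that every such configuration is either forbidden outright or reduces to a subsequent inductive fix. The hypothesis $|W_1|+|U_1|\geq 1$ is essential precisely because it is exactly what prevents the diagonal $\alpha_2$-to-$\beta_2$ overlap forbidden by Lemma~\ref{lem:noOverlapInDiffPaths}, thereby allowing the two ``sides'' of the induction to be handled independently.
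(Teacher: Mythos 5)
Your argument is essentially the paper's own proof: the same induction on $|W_2|+|U_2|$ peeling one letter off $W_2$, the same observation that the only possibly new bad domino must start with the freshly exposed edge $e$, the same replacement of an unstable $1$-typed domino by a $2$-typed one, and the same appeals to Lemma \ref{lem:noOverlapInSamePath} (for the $\alpha$ side) and Lemma \ref{lem:noOverlapInDiffPaths} together with $|W_2|+|U_2|<|W|+|U|$ (for the $\beta$ side). The residual worry you flag about whether Lemma \ref{lem:noOverlapInSamePath} excludes every new domino meeting $N'$, and not just the pairing with $D_3$, is present in the paper's proof as well (which invokes that lemma in exactly the same way); the only other thing the paper adds is a preliminary reduction to diagrams with connected interior and no spikes, which is needed to apply those two lemmas.
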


The statement of Lemma \ref{lem:interMidStep} is almost identical to the statement of Proposition \ref{prop:diagDomInTopPos} with the exception that we only require that dominos will be well-positioned if they are contained in the parts of $\alpha$ and $\beta$ which are labelled by $W_2$ or $U_2$. The proof follows. Before we give the proof we remark that it is enough to prove the lemma for connected components of the interior of the diagram $M$. The reason is that the lemma would hold if it holds for each of the component of the interior. Thus, the general case follows from the the case that the interior is connected and the diagram has no spikes.

\begin{proof}[Proof of Lemma \ref{lem:interMidStep}]
As remarked above, we will assume that the diagrams have connected interior and no spikes. The proof is by induction on $|W_2|+|U_2|$. The base case where $|W_2|+|U_2| \leq 1$ follows trivially. Assume that the claim is true for $|W_2|+|U_2| = n$ and consider the case where $|W_2|+|U_2| = n + 1 < |W|+|U|$. Assume w.l.o.g.\ that $|W_2|\geq2$ and write $W_2 = z W_2'$ where $z \in \pmX$. Since, $|W_2'| + |U_2| = n$ we get that by induction hypothesis there is a minimal diagram $M$ with boundary cycle $\sigma \alpha \tau^{-1} \beta^{-1}$ for which the following conditions hold:
\begin{enumerate}
	\item $\alpha$ is labelled by $W$, $\beta$ is labelled by $U$, $\sigma$ is labelled by $x$, and $\tau$ is labelled by $y$.
	\item The path $\alpha$ decomposes as $\alpha = \alpha_1 e \alpha_2$ where $\alpha_1$ is labelled by $W_1$, $e$ is labelled by $z$, and $\alpha_2$ is labelled by $W_2'$. Also, all dominos contained in $\alpha_2$ are well-positioned.
	\item The path $\beta$ decomposes as $\beta = \beta_1 \beta_2$ and $\beta_i$ is labelled by $U_i$ for $i=1,2$. In addition, all dominos contained in $\beta_2$ are well-positioned.
\end{enumerate}
We are done if all dominos in $e\alpha_2$ and $\beta_2$ are well-positioned. So, assume that there is a domino $N$ which is contained in $e\alpha_2$ which is not well-positioned (in this case the boundary cycle of $N$ must contains the edge $e$). Suppose we generate the diagram $M'$ from the diagram $M$ by replacing the domino $N$ with another domino which is well-positioned (i.e., by replacing an unstable $1$-typed domino with a $2$-typed domino). By Lemma \ref{lem:noOverlapInSamePath} all dominos of $M'$ which are contained in $e\alpha_2$ will be well-positioned (since the lemma say that there is no overlap between the domino we replaced and the rest of the dominos contained in $\alpha_2$). By Lemma \ref{lem:noOverlapInDiffPaths} the dominos of $M'$ which are contained in $\beta_2$ do not overlap with the dominos in $e\alpha_2$ (using the assumption that $|W_2| + |U_2|<|W|+|U|$ so either $\alpha_2 \neq \alpha$ or $\beta_2 \neq \beta$). Thus, all the dominos in $\beta_2$ are well-positioned. Consequently, the diagram $M'$ has all the needed properties and the claim is proved.
\end{proof}

Next the proof of the proposition.

\begin{proof}[Proof of Proposition \ref{prop:diagDomInTopPos}]
Since the case that $W=U=\eps$ is trivial, assume w.l.o.g.\ that $|W|\geq1$ and let $W = z W'$ for $z \in \pmX$. We have that $|W'|+|U| < |W|+|U|$ and thus by Lemma \ref{lem:interMidStep} there exists a minimal diagram $M$ with boundary cycle $\sigma \alpha \tau^{-1} \beta^{-1}$ such that:
\begin{enumerate}
	\item $\alpha$ is labelled by $W$, $\beta$ is labelled by $U$, $\sigma$ is labelled by $x$, and $\tau$ is labelled by $y$.
	\item The path $\alpha$ decomposes as $\alpha = e\alpha'$ where $e$ is an edge labelled by $z$ and $\alpha'$ is labelled by $W'$.
	\item All dominos contained in $\alpha'$ and $\beta$ are well-positioned.
\end{enumerate}
We are done if all dominos contained in $\alpha$ are well-positioned. Thus, suppose that there is a domino $N$ contained in $\alpha$ which is not well-positioned. Namely, $N$ is an unstable $1$-type domino. In this case the boundary cycle of $N$ must contain the edge $e$. We generate a new diagram $M'$ by replacing the domino $N$ with a well-positioned domino $N'$ (i.e., $N'$ is a $2$-typed domino). We claim that the the new diagram $M'$ has all the needed properties. This is obvious if the interior of $N$ does not overlap with a domino that is contained in $\beta$. Thus, assume that there is a domino $Q$ which is contained in $\beta$ and which overlap with the domino $N$ (in this case $\sigma$ is empty). Denote the first and second regions of $N$ by $D_1$ and $D_2$, respectively, and the last region of $Q$ by $D_3$. Recall that by Lemma \ref{lem:noOverlapInDiffPaths} we have that $D_1$ is also the first region of $Q$; see Figure \ref{fig:overlapAtBegining}(a). Denote the first and second regions of $N'$ by $D_1'$ and $D_2'$, respectively; see Figure \ref{fig:overlapAtBegining}(b). The last edge of $\beta$ in $M'$ is not part of the boundary cycle of a domino which is contained in $\beta$. Thus, all the dominos of $M'$ which are contained in $\beta$ are well-positioned (since they are well-positioned in $M$). By Lemma \ref{lem:noOverlapInSamePath} all dominos of $M'$ that are contained in $\alpha$ are well-positioned since they do not overlap with $N'$. Thus, all dominos in $M'$ which are contained in $\alpha$ and $\beta$ are well-positioned and the proposition is proved.

\begin{figure}[ht]
\centering
\includegraphics[totalheight=0.15\textheight]{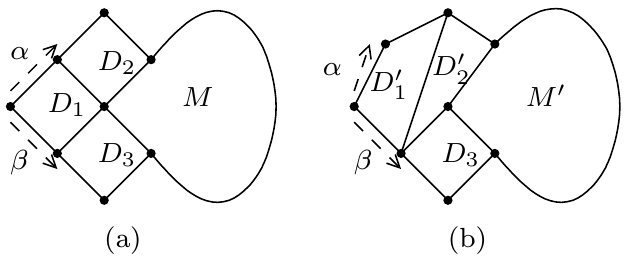}
\caption{An overlaps between dominos at the beginning of the paths}\label{fig:overlapAtBegining}
\end{figure}

\end{proof}

\subsection{Stability graph is cycle-free \label{ssec:StabGrphNoCycle}}

In this subsection we prove Proposition \ref{prop:StabGrphNoCycle}, namely that the stability graph is cycle-free. We start with a simple lemma.

\begin{lemma} \label{lem:stableDom}
Let $N$ be a stable domino with boundary cycle $\mu = \mu_1 \mu_2$. Suppose that $\mu_1$ is labelled by $xy$, that $\mu_2$ is labelled by $abcd$, and that $xy=zw$ in $G$ (all letters are generators in $\pmX$ and the boundary label is freely-reduced). See the left side of Figure \ref{fig:stableDom}. Then, there is a domino $N'$ with boundary cycle $\mu'$ such that $\mu'$ is labelled by $zwabcd$ (see the upper right side of Figure \ref{fig:stableDom}). Moreover, the biased dominos $(N,\mu)$ and $(N',\mu')$ have the same type.
\end{lemma}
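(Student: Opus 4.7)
The plan is to construct $N'$ by first guaranteeing existence as a domino via edge counting, and then verifying that it has the same type as $N$ by case analysis on the type of $N$.

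I would begin by establishing two auxiliary results that follow from hypotheses $(\mathcal{H}_1)$ and $(\mathcal{H}_2)$(b) combined with simple-connectedness of van Kampen diagrams. First, a \emph{short-relator lemma}: every freely-reduced length-$4$ word $V$ with $V=1$ in $G$ is a cyclic conjugate of a relator in $\RR$. Indeed, a minimal van Kampen diagram for $V$ with $n$ regions has, by edge counting, $e_i = 2n-2$ inner edges; hypothesis $(\mathcal{H}_2)$(b) forbids two regions from sharing a path of length $\geq 2$, simple-connectedness forbids them from sharing two disjoint inner edges, and the absence of inner valence-$3$ vertices (from $(\mathcal{H}_2)$(a)) rules out the remaining configurations, so $n = 1$. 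Second, a \emph{domino lemma}: the parallel edge count $e_i = 2n-3$ for length-$6$ boundaries shows that every minimal diagram with a freely-reduced length-$6$ boundary word has exactly two regions. Applying this to $zwabcd$ (which equals $1$ in $G$ since $xy=zw$) immediately produces a domino $N'$ whose boundary cycle $\mu'$ is labelled by $zwabcd$.

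The remaining task is to check that $(N,\mu)$ and $(N',\mu')$ have the same type, which I handle by case analysis on the type of $N$. If $N$ is $2$-typed or $3$-typed, then $\mu_1 = xy$ lies entirely on a single region $D_1$ of $N$, so $\partial D_1$ contains $xy$ as a consecutive sub-word; the word obtained from $\partial D_1$ by substituting the block $xy \to zw$ equals the identity in $G$ (since $xy=zw$) and is freely reduced (using free-reducedness of $zwabcd$ together with the cyclic reducedness of $\partial D_1$), so by the short-relator lemma it is a relator, and attaching the corresponding $4$-gon to $D_2$ along the original inner edge yields a domino $N'$ that is manifestly of the same type as $N$. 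The $1$-typed case is more delicate because $\mu_1$ spans both regions of $N$ so no local substitution is available; here I invoke stability of $N$. By the short-relator lemma there is a $4$-gon $Q$ with boundary $xyw^{-1}z^{-1}$. Suppose for contradiction that $N'$ is $k$-typed for some $k\in\{2,3\}$; then the prefix $zw$ of $\mu'$ lies on a single region of $N'$, so gluing $Q$ along $zw$ creates a shared length-$2$ path with that region. This configuration is non-minimal (violating $(\mathcal{H}_2)$(b)), and the forced merge produces a $k$-typed domino with boundary $xyabcd$. By stability of $N$ every such domino is equivalent to $N$, hence $1$-typed, a contradiction. Therefore $N'$ must be $1$-typed.

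The main obstacle is unquestionably the $1$-typed case, where the substitution $xy \to zw$ is inherently non-local across the two regions of $N$; $N'$ can only be obtained abstractly from the domino lemma, and the stability hypothesis is essential to pin down its type. A secondary subtlety is verifying free-reducedness of the intermediate length-$4$ words before applying the short-relator lemma; the only failure modes are rare coincidences among the generators (e.g.\ $p = w$ or $z = d^{-1}$), which can be handled separately without affecting the main structure of the argument.
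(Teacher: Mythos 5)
Your proposal is correct and follows essentially the same route as the paper: produce the relator $xyw^{-1}z^{-1}$ and the domino $N'$ from the fact that short freely-reduced words equal to $1$ in $G$ bound one- or two-region minimal diagrams, then glue that square onto a domino and use non-minimality of the resulting configuration together with the stability of $N$ to control the type of $N'$. Your explicit case split on the type of $N$ (direct substitution of $xy\to zw$ inside one region in the $2$- and $3$-typed cases, with stability invoked only in the $1$-typed case) is a slightly more careful organization of the paper's terser type argument, but the underlying idea is the same.
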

\begin{proof}
First, since there is a relation $xyw^{-1}z^{-1}$ we can form a diagram with inner vertex of valence three with boundary label $zwabcd$ (see the left side of Figure \ref{fig:stableDom}). This diagram is not a $C(4)\&T(4)$ diagram (due to the inner vertex of valence three) and so it is not minimal. Consequently, there is a domino $N'$ with boundary cycle $\mu'$ such that $\mu'$ is labelled by $zwabcd$ (as explained in Remark \ref{rem:twoTypesOfNonMin}). If the biased dominos $(N,\mu)$ and $(N',\mu')$ have different type then the two letters $zw$ appear on the boundary of the same region in $N'$ (as in the lower right side of Figure \ref{fig:stableDom}). Consequently, we can form a biased domino $(N'',\mu'')$ where $\mu''$ has the label $xyabcd$ and such that $(N,\mu)$ and $(N'',\mu'')$ have different type. This would contradict the fact that $N$ is stable.
\end{proof}

\begin{figure}[ht]
\centering
\includegraphics[totalheight=0.25\textheight]{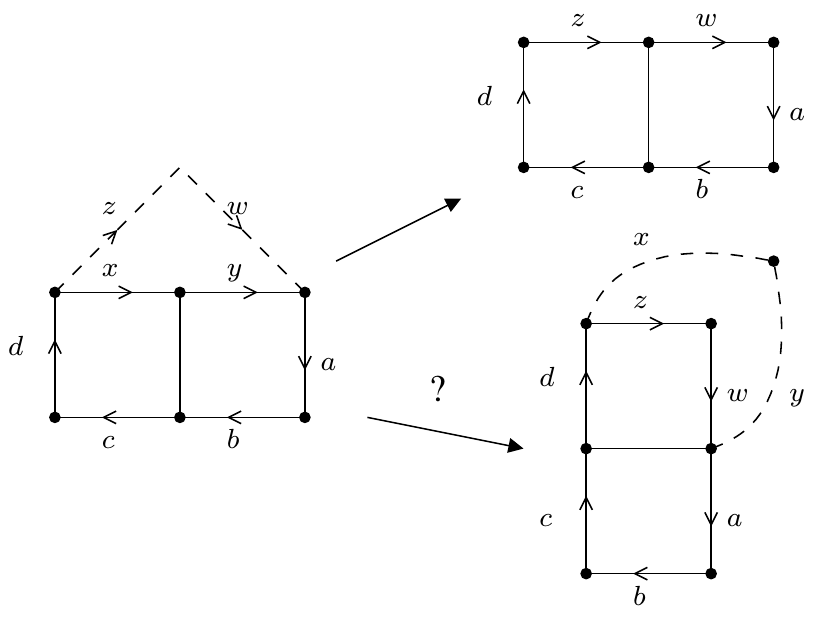}
\caption{A stable domino and an inner vertex of valence three}\label{fig:stableDom}
\end{figure}

\begin{remark}
Note that in the previous lemma, we can't have that $d=z^{-1}$ or $a=w^{-1}$. The reason is that if $d=z^{-1}$ then $abcw = abcdzw = abcdxy = 1$ and $xyw^{-1}d = xyw^{-1}z^{-1} = 1$ in $G$ so $abcw$ and $xyw^{-1}d$ are relators in $\P$. Thus, we can construct a domino with boundary label $xyabcd$ which is not equivalent to the domino $N$ and that would contradict the stability of $N$. We are using here a standard argument regarding proper $C(4)\& T(4)$ diagrams (which can be found, for example, in \cite{LS77}) stating that such diagram with more one region must have boundary cycle of length larger than four. Thus, if $abcd=1$ in $G$ then there is a minimal $C(4)\& T(4)$ diagram with boundary cycle labelled by $abcd$ and consequently this diagram must contain just one region.
\end{remark}

Next the proof of the proposition. For clarity we prove that there are no cycles of length four. It is straightforward to generalize this proof to show that there are no cycles of any length.

\begin{proof}[Proof of Proposition \ref{prop:StabGrphNoCycle}]
Let $x \in \pmX$ and suppose that there is a series of edges in $E_x$
\[
y_1 \to y_2 \to y_3 \to y_4
\]
where $y_1$, $y_2$, $y_3$, and $y_4$ are in $V_x$. We claim that there is no edge in $E_x$ from $y_4$ to $y_1$. The edges $(y_1,y_2)$, $(y_2,y_3)$, and $(y_3,y_4)$ in $E_x$ imply that the triplets $(x,y_1,y_2)$, $(x,y_2,y_3)$, and $(x,y_3,y_4)$ are stable (see Definition \ref{def:stabilityGraph} of the stability graph $\Gamma_x$). Hence, by the definition of stable triplet (Definition \ref{def:stableTriplet}) there are three stable biased domino $(N_1,\mu_1)$, $(N_2,\mu_2)$, and $(N_3,\mu_3)$ such that for $i=1,2,3$
\begin{enumerate}
	\item $\mu_i$ is labelled by $x y_i a_i b_i c_i d_i$.
	\item The inner edge of $N_i$ is labelled by $y_{i+1}$.
\end{enumerate}
See the left side of Figure \ref{fig:noCycles}. Now, assume by that there is another biased domino $(N_4,\mu_4)$ where $\mu_4$ labelled by $x y_4 a_4 b_4 c_4 d_4$ and the inner edge labelled by $y_1$ (see the lower left side of Figure \ref{fig:noCycles}). We construct a biased domino $(Q,\rho)$ (the lower right side of Figure \ref{fig:noCycles}) where $\rho$ is also labelled by $x y_4 a_4 b_4 c_4 d_4$ but the two biased dominos $(N_4,\mu_4)$ and $(Q,\rho)$ have different type (i.e., they are not equivalent). This would show that the domino $N_4$ cannot be stable and consequently, the triplet $(x,y_4,y_1)$ is not stable so there is no edge $(y_4,y_1)$ in $E_x$.

The first region of $N_4$ is labelled by $d_4^{-1} c_4^{-1} y_1^{-1} x$. Hence, we have that $d_4^{-1} c_4^{-1} = xy_1$ in $G$ and so by Lemma \ref{lem:stableDom} we have that there is a biased domino $(M_1,\sigma_1)$ where $\sigma_1$ is labelled by $d_4^{-1} c_4^{-1} a_1 b_1 c_1 d_1$ and the two biased dominos $(N_1,\mu_1)$ and $(M_1,\sigma_1)$ have the same type. Suppose the inner edge of $M_1$ is labelled by $m_1$. The domino $M_1$ contains a region with boundary label $d_4^{-1} m_1 c_1 d_1$ (see the upper right part of Figure \ref{fig:noCycles}) so $d_4^{-1} m_1 = d_1^{-1} c_1^{-1}$ in $G$. The domino $N_1$ contain a region labelled by $xyc_1d_1$ so $d_1^{-1} c_1^{-1} = xy_2$ and thus $d_4^{-1} m_1 = xy_2$ in $G$. We next repeat the argument for the domino $N_2$. Since $d_4^{-1} m_1 = xy_2$ in $G$ we get that by Lemma \ref{lem:stableDom} there is a biased domino $(M_2,\sigma_2)$ where $\sigma_2$ is labelled by $d_4^{-1} m_1^{-1} a_2 b_2 c_2 d_2$ and the two biased dominos $(N_2,\mu_2)$ and $(M_2,\sigma_2)$ have the same type. Suppose the inner edge of $M_2$ is labelled by $m_2 \in \pmX$ then we have that $d_4^{-1} m_2 = xy_2$ in $G$. Finally, repeating the same process we get that there is a biased domino $(M_3,\sigma_3)$ where $\sigma_3$ is labelled by $d_4^{-1} m_2^{-1} a_3 b_3 c_3 d_3$ and the two biased dominos $(N_3,\mu_3)$ and $(M_3,\sigma_3)$ have the same type. Also, if the inner edge of $M_3$ is labelled by some $m_3 \in \pmX$ then $d_4^{-1} m_3 = xy_4$ in $G$. This shows that $d_4^{-1} m_3 y_4^{-1} x^{-1}$ is a relation in $\RR$ (see the top region of the domino $Q$). 

\begin{figure}[ht]
\centering
\includegraphics[totalheight=0.35\textheight]{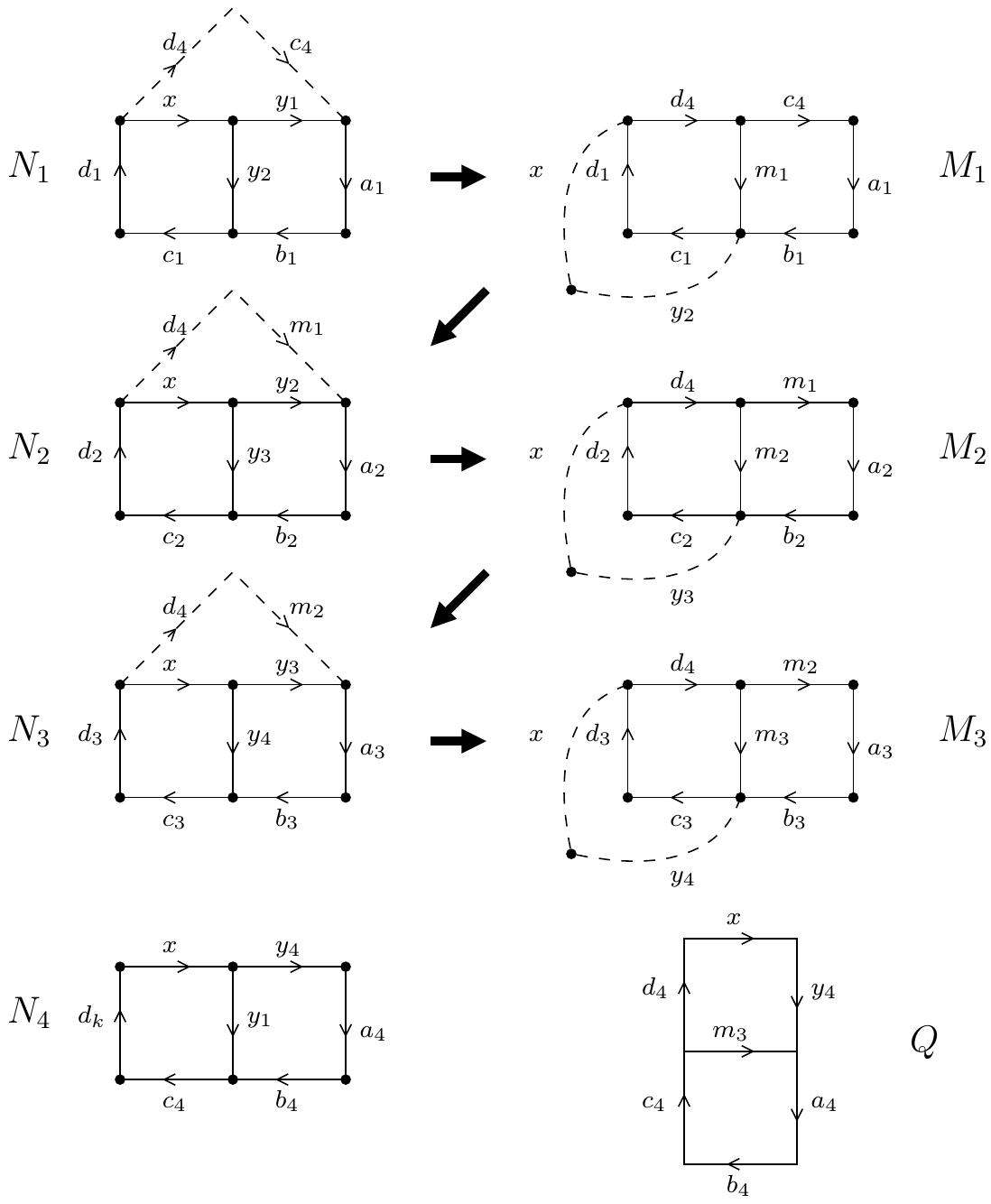}
\caption{No cycles in $\Gamma_x$}\label{fig:noCycles}
\end{figure}

\noindent For $i=1,2,3$, the domino $N_i$ have a region labelled by $a_i b_i y_{i+1}^{-1} y_i$. Hence,  $a_i b_i = y_i^{-1} y_{i+1}$ in $G$ for $i=1,2,3$. The domino $N_4$ has a region labelled by $a_4 b_4 y_1^{-1} y_4$ and so $a_4 b_4 = y_4^{-1} y_1$ in $G$. Thus,
\begin{equation}\label{eq:aibi}
(a_1 b_1) (a_2 b_2) (a_3 b_3) (a_4 b_4) = 1  \quad \text{in $G$}
\end{equation}
and by rearranging we get
\begin{equation}\label{eq:aibi2}
(a_1 b_1) (a_2 b_2) (a_3 b_3) = b_4^{-1} a_4^{-1}  \quad \text{in $G$}
\end{equation}
The domino $M_1$ has a region labelled by $a_1 b_1 m_1^{-1} c_4$ so $a_1 b_1 = c_4^{-1} m_1$ in $G$. For $i=2,3$ the domino $M_i$ has a region labelled by $a_i b_i m_i^{-1} m_{i-1}^{-1}$ and so we have $a_2 b_2 = m_1^{-1} m_2$ and $a_3 b_3 = m_2^{-1} m_3$ in $G$. Combining this with \eqref{eq:aibi2} we get
\begin{equation}\label{eq:ckmk}
b_4^{-1} a_4^{-1} = (c_4 m_1) (m_1^{-1} m_2)  (m_2^{-1} m_3) = c_4 m_3 \quad \text{in $G$}
\end{equation}
Thus, from \eqref{eq:ckmk} there is a relation $m_3 a_4 b_4 c_4$ in $\RR$ (see the bottom region of $Q$). Consequently we can construct the biased domino $(Q,\rho)$ where $\rho$ is labelled by $x y_4 a_4 b_4 c_4 d_4$ but the two biased dominos $(N_4,\mu_4)$ and $(Q,\rho)$ have different types. This shows that $N_4$ is not stable as claimed.
\end{proof}

\section{Barycentric Sub-Division \label{sec:barySubDiv}}

In this section we describe the ``barycentric sub-division'' procedure (BSD procedure, for short) which origins in folklore and is described in \cite{GS91}. The proof of Theorem \ref{thm:mainApplicationC4T4} start by applying this procedure.

Let $\P = \GPres{X|\RR}$ be a presentation of a group $G$ where the set of relations $\RR$ is symmetrically closed and that each generator in $X$ is a piece. The presentation $\P$ and the group $G$ are fixed throughout this section. The result of the BSD procedure is a new presentation $\widetilde{\P} = \GPres{E | \T}$ of $G*F$ where $F$ is a free group of finite rank (the rank of $F$ is the number of relations in $\RR$). We start with the generators $E$. Let $R=x_1 x_2 \cdots x_n$ be a relator in $\RR$. We define $n$ new symbols: $e_1^R,e_2^R,\ldots,e_n^R$ which will be part of the generating set. We say that a generator $e_i^R$ \emph{comes} from the relator $R$. The set $E$ of generators is the following set:
\[
E = \Set{e_i^R | R \in \RR, 1 \leq i \leq |R|}
\]
Next, we describe the set $\T$ of relations. Let $P$ be a piece of the presentation $\P$ and let $R_1 = x_1 x_2 \cdots x_n$ and $R_2 = y_1 y_2 \cdots y_m$ be two relators in $\RR$. Assume that $R_1'$ and $R_2'$ are cyclic permutation of $R_1$ and $R_2$, respectively, such that $P$ is prefix of both $R_1'$ and $R_2'$. Assume also that $R_1' \neq R_2'$. Let $R_1' = PU$. In this decomposition, the prefix $P$ of $R_1'$ starts with the generator $x_i$ and ends with the generator $x_{j-1}$ (note that $j$ may be less than $i$). Similarly, if $R_2' = PV$ then the prefix $P$ of $R_2'$ starts with the generator $y_k$ and ends with the generator $y_{\ell-1}$. When all the above holds, then $\T$ consists of the following relator:
\[
\left(e_i^{R_1}\right)^{-1} e_j^{R_1} \left(e_\ell^{R_2}\right)^{-1} e_k^{R_2}
\]
We will say that the above relator \emph{corresponds} to the piece $P$. We will also say that the sub-words $(e_i^{R_1})^{-1} e_j^{R_1}$ and $(e_\ell^{R_2})^{-1} e_k^{R_2}$ corresponds to the piece $P$.

\begin{observation} \label{obs:letDetPieAndRelator}
The relator $(e_i^{R_1})^{-1} e_j^{R_1} (e_\ell^{R_2})^{-1} e_k^{R_2}$ is determined by the following data:
\[
R_1,\, R_2,\, i,\, j,\, k,\, \ell
\]
where: $R_1$, $R_2$ in $\RR$; $1 \leq i,j \leq |R_1|$; $1 \leq k,\ell \leq |R_2|$; $i \neq j$; $k \neq \ell$.
\end{observation}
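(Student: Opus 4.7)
The plan is to verify two directions. First, that once the data $(R_1, R_2, i, j, k, \ell)$ subject to the constraints is fixed, the stated expression is a well-defined word over $E^{\pm 1}$. Second, that conversely the six parameters can be recovered unambiguously from the word. Both directions are really just an unpacking of the definitions; the observation is essentially notational.

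For the first direction, I would simply note that the expression $(e_i^{R_1})^{-1} e_j^{R_1} (e_\ell^{R_2})^{-1} e_k^{R_2}$ is a specific word of length four. By the construction of $E$, the symbol $e_p^{R}$ is defined for every pair $(R,p)$ with $R\in\RR$ and $1\le p\le|R|$, and the constraints $1\le i,j\le|R_1|$, $1\le k,\ell\le|R_2|$ guarantee that each of the four letters appearing is a legitimate element of $E$. Hence the word itself is unambiguous.

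For the second direction, the point is that $E$ is, by definition, the set of formal symbols indexed by pairs $(R,p)$; two such symbols $e_p^{R}$ and $e_{p'}^{R'}$ are equal if and only if $R=R'$ and $p=p'$. In particular, the superscript and subscript of each letter are intrinsic to that letter and can be read off from it. Thus, given a word of the form $(e_i^{R_1})^{-1} e_j^{R_1} (e_\ell^{R_2})^{-1} e_k^{R_2}$, its first letter determines the pair $(R_1,i)$, its second letter determines $j$ (and is consistent with $R_1$), and the third and fourth letters yield $R_2$, $\ell$, and $k$. This recovers the tuple $(R_1,R_2,i,j,k,\ell)$ uniquely.

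I do not anticipate a real obstacle. The side conditions $i\ne j$ and $k\ne \ell$ are required so that the associated sub-words $(e_i^{R_1})^{-1} e_j^{R_1}$ and $(e_\ell^{R_2})^{-1} e_k^{R_2}$ correspond to non-degenerate occurrences of a piece $P$ inside $R_1$ and $R_2$ (the piece begins at position $i$ and ends at position $j-1$ in $R_1$, and similarly for $R_2$), but they play no role in the recoverability argument itself. The only thing worth remarking is that, strictly speaking, the observation does not claim that every such tuple comes from a piece of $\P$; it only says that when a tuple does arise from a piece, the resulting relator is fully encoded by, and recoverable from, the six-tuple.
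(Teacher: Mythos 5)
Your proposal is correct and matches the paper's treatment: the paper states this as an Observation with no proof, regarding it as immediate from the fact that the word $(e_i^{R_1})^{-1} e_j^{R_1} (e_\ell^{R_2})^{-1} e_k^{R_2}$ is literally built from the six parameters and that the symbols $e_p^R$ are distinct formal generators indexed by $(R,p)$. Your unpacking of both directions is exactly the content the paper leaves implicit, and your closing remark about the role of the conditions $i\neq j$, $k\neq\ell$ is consistent with how the observation is later used in Lemma \ref{lem:letDetPieAndRelator}.
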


We will need the following lemma which follows from Observation \ref{obs:letDetPieAndRelator}.

\begin{lemma} \label{lem:letDetPieAndRelator}
The pieces of the presentation $\widetilde{\P}$ have length at most two.
\end{lemma}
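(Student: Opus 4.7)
The plan is to prove by contradiction that no length-$3$ word over $E$ can be a piece, using the rigid form of relators in $\T$ together with Observation \ref{obs:letDetPieAndRelator}. Since every relator in $\T$ has length exactly $4$, every piece has length at most $3$, so it suffices to rule out length $3$. I would fix a candidate $W = w_1 w_2 w_3 \in \ws{E}$ and, toward contradiction, assume $W U_1$ and $W U_2$ are two distinct elements of the symmetric closure of $\T$ (so $U_1, U_2$ are single letters with $U_1 \neq U_2$).

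Next I would exploit that each relator of $\T$ has the rigid form $(e_i^{R_1})^{-1} e_j^{R_1} (e_\ell^{R_2})^{-1} e_k^{R_2}$: along positions $1$--$4$ its sign pattern is $(-,+,-,+)$ and its superscript pattern is $(R_1,R_1,R_2,R_2)$. Consequently a length-$3$ prefix of any cyclic conjugate must have strictly alternating signs and a superscript pattern of shape $AAB$ or $ABB$ (with $A=B$ permitted when $R_1=R_2$). If $W$ fails these pattern constraints, $W$ is a prefix of no element of the symmetric closure and we are already done, so I may assume it matches one of these shapes. Reading off the superscripts and indices of $w_1,w_2,w_3$ then identifies $R_1$ and $R_2$ together with three of the four indices $i, j, k, \ell$; depending on $W$'s sign pattern, the remaining ambiguity is whether $W$ is the prefix of the $\rho_1$-shift or of the $\rho_3$-shift of some relator (and symmetrically $\rho_2$ versus $\rho_4$).

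The crux of the argument is to show that in either of the two interpretations, the fourth letter of the length-$4$ extension is forced to be the same. By Observation \ref{obs:letDetPieAndRelator}, once the full $6$-tuple is pinned down the relator is determined as a word in $\ws{E}$; and the still-missing fourth index is itself forced by the piece-length congruence $|P| \equiv j-i \pmod{|R_1|}$ and $|P| \equiv \ell-k \pmod{|R_2|}$ coming from the fact that the relator in $\T$ arises from a genuine piece $P$ of $\P$. A direct computation comparing the two interpretations then produces the \emph{same} sign, the \emph{same} superscript, and the same fourth index (both evaluate to $n_1 + n_3 - n_2$ modulo the appropriate relator length). Hence $U_1 = U_2$, contradicting the hypothesis, and $W$ cannot be a piece.

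The main obstacle I expect is the last index-matching computation: one has to check carefully that the fourth-letter index coming from the $\rho_3$-interpretation genuinely agrees with that from the $\rho_1$-interpretation, since the piece length $|P|$ is a fixed positive integer which appears in the two formulas through residues modulo two a priori distinct moduli $|R_1|$ and $|R_2|$. Once this short bookkeeping is carried out, the uniqueness of the fourth letter is immediate and the lemma follows directly from Observation \ref{obs:letDetPieAndRelator}.
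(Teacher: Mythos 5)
Your reduction to ruling out length-$3$ pieces, the sign/superscript pattern analysis, and the identification of the two competing interpretations (the full side sitting at positions $1$--$2$ versus positions $2$--$3$ of $W$) all match the paper's argument. The gap is exactly at the step you flagged as the ``main obstacle'': the claim that the two interpretations force the same fourth letter is false, and no bookkeeping rescues it. Say $W=(e_{n_1}^{A})^{-1}e_{n_2}^{A}(e_{n_3}^{B})^{-1}$. In the first interpretation the side $(e_{n_1}^{A})^{-1}e_{n_2}^{A}$ is a \emph{first} side $(e_i^{R_1})^{-1}e_j^{R_1}$ and corresponds to the piece $P$ read in $A$ from position $n_1$ to $n_2-1$, so $|P|\equiv n_2-n_1 \pmod{|A|}$ and the fourth index is $n_3-|P| \bmod |B|$. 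In the second interpretation the same two letters form a \emph{second} side $(e_\ell^{R_2})^{-1}e_k^{R_2}$ and correspond to the \emph{complementary} piece $P'$ read in $A$ from $n_2$ to $n_1-1$, so $|P'|=|A|-|P|$ and the fourth index is $n_3+|P'|=n_3+|A|-|P| \bmod |B|$. These agree modulo $|B|$ only when $|B|$ divides $|A|$; your ``both evaluate to $n_1+n_3-n_2$'' computation silently substitutes a residue modulo $|A|$ into a congruence modulo $|B|$. Concretely, with $|A|=4$, $|B|=8$, $n_1=1$, $n_2=3$ the two formulas give $n_3-2$ and $n_3+2$, which are distinct modulo $8$.

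The correct way to close this step is not to show that the two interpretations agree but that they cannot \emph{both} be realized by relators of $\T$: if both were, then $P$ and $P'$ would both be pieces of $\P$, and the cyclic conjugate of $A$ beginning at $x_{n_1}$ would equal $PP'$, a product of two pieces, which is impossible when $\P$ satisfies $C(4)$ (the situation in which the lemma is actually applied; without some such hypothesis both interpretations really can occur simultaneously and the bound fails). The paper's proof takes the same route as yours up to this point---recover the piece from the full side contained in the length-$3$ word, then recover the missing index from the piece---and leaves the two-interpretation ambiguity implicit, but the resolution has to go through the small-cancellation hypothesis rather than through the index computation you propose.
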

\begin{proof}
Suppose $W=(e_i^{R_1})^{-1} e_j^{R_1} (e_\ell^{R_2})^{-1} e_k^{R_2}$ is a relator in $\T$. Let $U$ be prefix of a cyclic conjugate $V$ of $W$ of length \emph{three}. $U$ must either contain $(e_i^{R_1})^{-1} e_j^{R_1}$ or $(e_\ell^{R_2})^{-1} e_k^{R_2}$. In the second case $U$ must also contain one of the letters $e_i^{R_1}$ or $e_j^{R_1}$ (or their inverses). In the first case $U$ must also contain one of the letters $e_k^{R_2}$ or $e_\ell^{R_2}$ (or their inverses). Thus, from $U$ we can recover the corresponding piece $P$, the two relators $R_1$ and $R_2$, and three of the four indices $i$, $j$, $k$, and $\ell$. Now, from this information we can recover \emph{all} four indices $i$, $j$, $k$, and $\ell$. Thus, it follows that if $U$ is a prefix of a cyclic conjugate $V'$ of an element of $\T$ then $V = V'$. Consequently, there are no pieces of length three.
\end{proof}

In the rest of this section we describe the general properties of the new presentation $\widetilde{\P}$ and its properties when $\P$ satisfies the algebraic $C(4) \& T(4)$ conditions. These properties were already observed in \cite{GS91}. The following is clear.

\begin{observation} \label{obs:lenFourCycRed}
All relations in $\T$ are of length four and cyclically-red\-uced.
\end{observation}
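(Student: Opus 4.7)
The statement breaks into two claims: length four, and cyclic reducedness.

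The first claim is immediate by inspection of the generic form of a relation in $\T$: each relation was defined to be the four-letter word
\[
\left(e_i^{R_1}\right)^{-1} e_j^{R_1} \left(e_\ell^{R_2}\right)^{-1} e_k^{R_2},
\]
and all four factors lie in $E\cup E^{-1}$. So every element of $\T$ has length exactly four.

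For cyclic reduction, I would check each of the four cyclically adjacent pairs of letters and verify that no pair cancels. The two ``intra-relator'' pairs $(e_i^{R_1})^{-1}e_j^{R_1}$ and $(e_\ell^{R_2})^{-1}e_k^{R_2}$ cancel iff $i=j$ or $k=\ell$ respectively. Each of these would force the piece $P$ to wrap entirely around a cyclic conjugate of a relator (start and end at the same position), which is impossible: by definition $P$ is a \emph{common} prefix of the two \emph{distinct} cyclic conjugates $R_1'$ and $R_2'$, so $|P|$ is strictly less than the length of each relator, whence $i\neq j$ and $k\neq\ell$ cyclically.

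The two ``inter-relator'' pairs, $e_j^{R_1}(e_\ell^{R_2})^{-1}$ and (cyclically) $e_k^{R_2}(e_i^{R_1})^{-1}$, involve letters whose superscripts may differ. Since distinct relators contribute disjoint blocks of generators to $E$, a cancellation here requires $R_1=R_2$ as elements of $\RR$. In that case the hypothesis $R_1'\neq R_2'$ says $P$ occurs as a prefix of two distinct cyclic rotations of the same relator, so the starting indices satisfy $i\neq k$. Since the length $|P|$ is the same in both occurrences, the ending indices differ by the same shift, so $j\neq\ell$ and, dually, $k\neq i$. Hence neither inter-relator pair cancels, and the relator is cyclically reduced.

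\textbf{Main obstacle.} There is really no obstacle of substance here; the whole argument is a short inspection of four index-equality conditions. The only point that needs a moment of care is the ``wrap-around'' case, where one must use the $R_1'\neq R_2'$ hypothesis to conclude $i\neq k$ (and hence $j\neq\ell$) when $R_1$ and $R_2$ happen to be the same element of $\RR$.
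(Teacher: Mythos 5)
Your argument is correct: the paper gives no proof (it calls the observation clear), and your four-pair check is precisely the verification it leaves implicit, with the index constraints $i\neq j$, $k\neq\ell$ matching those already recorded in Observation \ref{obs:letDetPieAndRelator}. The only point worth flagging is that $i\neq j$ (equivalently, that $|P|$ is strictly shorter than $R_1'$) does not follow from $R_1'\neq R_2'$ alone --- a priori $P$ could equal the shorter of the two conjugates while being a proper prefix of the other --- but rather from the standing small-cancellation context in which a piece is never an entire relator (under $C(4)$ a relator is a product of at least four pieces); granting that, the rest of your check is airtight.
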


The set $\Set{e_1^R | R \in R}$ has size $|\RR|$ and it generate a free group of rank $|\RR|$. Thus, the presentation 
\[
\GPres{X \cup \Set{e_1^R | R \in R} | \RR}
\]
is a presentation of $G*F$ where $F$ is a free group of rank $|\RR|$. It follows from the above construction that by starting from this presentation one can get the presentation $\widetilde{\P} = \GPres{E | \T}$ by Tietze transformations \cite{LS77}. This implies the following lemma.
\begin{lemma} \label{lem:tietzeTrans}
The presentation $\widetilde{\P}$ presents the group $G * F$ where $F$ is a free group of rank $|\RR|$.
\end{lemma}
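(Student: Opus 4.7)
The plan is to construct mutually inverse homomorphisms $\phi \colon G * F \to H$ and $\psi \colon H \to G * F$, where $H$ denotes the group presented by $\widetilde{\P}$ and $F$ is the free group on $\{e_1^R : R \in \RR\}$; this is essentially the content of the Tietze-transformation sketch alluded to just before the lemma, but organised as a direct isomorphism. For $\phi$, set $\phi(e_1^R) = e_1^R \in E$; for each $x \in X$, use the hypothesis that every generator is a piece to pick a relator $R = x_1 \cdots x_n$ and position $i$ with $x_i = x$, and set $\phi(x) = (e_i^R)^{-1} e_{i+1}^R$ with cyclic indexing $e_{n+1}^R := e_1^R$.

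To verify that $\phi$ is a well-defined homomorphism, check $\phi(R) = 1$ in $H$ for each $R = x_1 \cdots x_n \in \RR$. The key observation is that for every position $l$ in every relator, the $\T$-relator arising from the single-letter piece $P = x_l$ (available because $x_l$ is a piece, and produced by the definition of $\T$ for every pair of distinct occurrences of $x_l$) identifies $(e_l^R)^{-1} e_{l+1}^R$ with $\phi(x_l)$ in $H$, independently of the arbitrary choice used to define $\phi$. Hence $\phi(R)$ telescopes to $\prod_{l=1}^n (e_l^R)^{-1} e_{l+1}^R = (e_1^R)^{-1} e_1^R = 1$. For $\psi$, set $\psi(e_i^R) = e_1^R \cdot x_1 \cdots x_{i-1} \in G * F$ where $R = x_1 \cdots x_n$. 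Well-definedness amounts to checking that each $\T$-relator $(e_i^{R_1})^{-1} e_j^{R_1} (e_\ell^{R_2})^{-1} e_k^{R_2}$ (corresponding to a piece $P$) maps to the identity; after the $e_1^{R_1}$ and $e_1^{R_2}$ factors cancel, this reduces to verifying $(x_1 \cdots x_{i-1})^{-1}(x_1 \cdots x_{j-1}) = P$ in $G$ and analogously $(y_1 \cdots y_{\ell-1})^{-1}(y_1 \cdots y_{k-1}) = P^{-1}$ in $G$, yielding $P \cdot P^{-1} = 1$.

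Verifying $\phi \circ \psi = \mathrm{id}_H$ and $\psi \circ \phi = \mathrm{id}_{G*F}$ on the respective generators is then a routine telescoping: $\psi\phi(x) = (x_1 \cdots x_{i-1})^{-1}(x_1 \cdots x_i) = x$, and $\phi\psi(e_i^R) = e_1^R \cdot (e_1^R)^{-1} e_2^R \cdots (e_{i-1}^R)^{-1} e_i^R = e_i^R$. The main obstacle is the wrap-around case $j < i$ in the well-definedness of $\psi$: when the piece $P$ straddles the cyclic seam of $R_1$, the expression $(x_1 \cdots x_{i-1})^{-1}(x_1 \cdots x_{j-1})$ is not literally $P$ as a free word, and one must invoke $R_1 = x_1 \cdots x_n = 1$ in $G$ (available because $R_1 \in \RR$) to identify it with $P = x_i x_{i+1} \cdots x_n x_1 \cdots x_{j-1}$ in $G$.
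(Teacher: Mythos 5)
Your overall strategy---turning the paper's one-line appeal to Tietze transformations into an explicit pair of mutually inverse homomorphisms---is the right reading of what the paper intends, and your $\psi$ and the two composite checks are fine. The genuine gap is in the well-definedness of $\phi$, and it is concentrated at the letters of a relator that lie in $X^{-1}$. Your telescoping needs $\phi(x_l)=(e_l^R)^{-1}e_{l+1}^R$ in $H$ for \emph{every} position $l$ of \emph{every} relator $R$. When $x_l=x^{-1}$ with $x\in X$, you have $\phi(x_l)=\phi(x)^{-1}=(e_{i+1}^{R'})^{-1}e_{i}^{R'}$, where $x$ occurs \emph{positively} at position $i$ of $R'$. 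But every relator of $\T$ identifies two expressions $(e_a^{S})^{-1}e_b^{S}$ and $(e_c^{S'})^{-1}e_d^{S'}$ that correspond to one and the same piece $P$ read as a common \emph{prefix} of two cyclic conjugates; the relators coming from the piece $x^{-1}$ therefore identify $(e_l^R)^{-1}e_{l+1}^R$ with expressions attached to other occurrences of $x^{-1}$, never with the \emph{inverse} of the expression attached to an occurrence of $x$. This is not a presentational quibble. Let $d$ be the greatest common divisor of the lengths of the relators in $\RR$ and set $\chi(e_i^S)=i \bmod d$. For a relator of $\T$ as in Observation \ref{obs:letDetPieAndRelator} one computes $\chi=(j-i)-(\ell-k)\equiv |P|-|P|\equiv 0 \pmod d$, so $\chi$ descends to a homomorphism $H\to\mathbb{Z}/d$. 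Under $\chi$ every $(e_l^R)^{-1}e_{l+1}^R$ (including the wrap-around term) maps to $+1$, while $\phi(x)^{-1}$ maps to $-1$; hence the identity you invoke fails in $H$ whenever $d>2$, which happens for instance for the symmetrized one-relator presentation of Remark \ref{rem:formingInnerVertexOfValance3}, where every relator has length $10$. So the argument does not establish that $\phi$ is a homomorphism, and the point at which it breaks is precisely the step that the paper's own ``by Tietze transformations'' glosses over (eliminating the generators $X$ requires the rewritten relators of $\RR$ to be consequences of $\T$, which is exactly the assertion at issue).

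A second, smaller problem: your claim that the choice in the definition of $\phi(x)$ is immaterial relies on a $\T$-relator pairing the occurrence of $x_l$ in $R$ with the chosen occurrence in $R'$, and such a relator exists only when the two associated cyclic conjugates are \emph{distinct words} (the condition $R_1'\neq R_2'$ in the construction of $\T$). Since $\RR$ is symmetrically closed it contains distinct relators that are cyclic conjugates of one another, and for the corresponding pairs of occurrences no relator is produced; the identification must then be routed through a third cyclic conjugate with the same one-letter prefix, which the hypothesis that every generator is a piece does supply. This one is fixable in a line, but it needs to be said.
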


To simplify the arguments below, let us adopt the following notation and definition.

\begin{definition}[Rewrite function]
The presentation $\widetilde{\P}$ is a presentation of $G * F$ so both the set $E$ and the set $\Gamma = X \cup \Set{e_1^R | R \in R}$ are generating sets for $G * F$. We define the \emph{rewrite function} $\phi:\ws{E}\to\ws{\Gamma}$. Let
\[
Y = \Set{(e_{i}^R)^{-1}e_{j}^R | R \in \RR \text{ and } 1 \leq i,j \leq |R|}
\]
We first describe an auxiliary function $\psi$ from $Y$ to $\pmX$. Let $(e_{i}^R)^{-1}e_{j}^R$ be an element of $Y$ for some $R \in \RR$ and $1 \leq i,j \leq |R|$. Suppose $R\in\RR$ can be written as $x_1 x_2 \cdots x_n$ and $R'$ is a cyclic conjugate of $R$ which start with $x_i$. Let $W$ be the prefix of $R'$ which ends with $x_{j-1}$. Then,
\[
\psi\left((e_{i}^R)^{-1}e_{j}^R\right) = W
\]
The function $\phi$ takes a word in $\ws{E}$ and produces a word in $\ws{\Gamma}$. We define $\phi$ recursively. If $W \in \ws{E}$ is a word of length one or zero then $\phi(W)=W$. Suppose $W \in \ws{E}$ is of length at least two. Write $W = ab W'$. Then
\[
\phi(W) = \left\{ \begin{array}{ll}
	\psi(ab) \,\phi(W') & \text{if } ab \in Y \\
	a \, \phi(b W') & \text{otherwise}
\end{array}
\right.
\]
In words, the function $\phi$ scans the word $W$ from left to right and replaces each occurrence of an element of $Y$ with a corresponding element of $\ws{W}$.
\end{definition}

It follows from the definition that if $\phi((e_{i}^R)^{-1}e_{j}^R) = W \in \ws{X}$ then $W$ is a sub-word of a cyclic conjugate of $R \in \RR$ (or inverse of $R$). Also, if $R'$ is a cyclic conjugate of $R=x_1 x_2 \cdots x_n \in \RR$ and $R' = W_1 W_2 W_3$ where $W_1$ starts with $x_i$, $W_2$ starts with $x_j$ and $W_3$ starts with $x_k$ then
\[
\phi((e_{i}^R)^{-1}e_{k}^R) = \phi((e_{i}^R)^{-1}e_{j}^R) \, \phi((e_{j}^R)^{-1}e_{k}^R)
\]
Finally, since $\phi$ sends each relator in $\T$ to a relator in $\RR$ (this is a matter of a routine check) we get that 
\[
\overline{\phi}(\overline{W}) = \overline{\phi(W)}
\]
is a homomorphism from $G*F$ to itself. Thus, if $W \in \ws{E}$ is equal to $1$ in $G*F$ then also $\phi(W) = 1$ in $G*F$.

\begin{notation}[Sides and middle segment] \label{not:sidesMidSeg}
If $D$ is a region in a diagram $M$ over $\widetilde{\P}$ then its boundary is labelled by
\[
\left(e_i^{R_1}\right)^{-1} e_j^{R_1} \left(e_\ell^{R_2}\right)^{-1} e_k^{R_2}
\]
for some $R_1,R_2 \in \RR$ and some indices $i$, $j$, $k$, and $\ell$. See Figure \ref{fig:sidesMidSeg}. Assume that $(e_i^{R_1})^{-1} e_j^{R_1}$ corresponds to the piece $P \in \ws{X}$. The boundary of $D$ decomposes as $\partial D = \mu\sigma^{-1}$ where $\mu$ is labelled by $(e_i^{R_1})^{-1} e_j^{R_1}$ and $\sigma$ is labelled by $(e_k^{R_2})^{-1} e_\ell^{R_2}$. We shall call $\mu$ and $\sigma$ the \emph{sides} of $D$ and we shall say that the region $D$ corresponds to the piece $P$. We add to $D$ an auxiliary segment, which is not originally part of the diagram, as follows. The auxiliary segment, denoted here by $\rho$, is a segment labelled by $P$ going from $i(\mu)$ to $t(\mu)$ \emph{inside} the interior of $D$. We call $\rho$ the \emph{middle segment} of $D$. Note once again that the middle segment is an auxiliary construction which we add to the diagram $M$. The initial and terminal vertices of $\mu$ (and also of $\sigma$) will be called vertices of Type A and the other two vertices in $\partial D$ will be called vertices of Type B.

\begin{figure}[ht]
\centering
\includegraphics[totalheight=0.17\textheight]{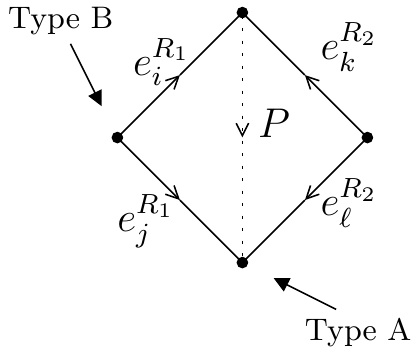}
\caption{Different parts of a region in the new presentation $\widetilde{\P}$}\label{fig:sidesMidSeg}
\end{figure}

\end{notation}

Suppose that $D_1$ and $D_2$ are two neighboring regions in a diagram $M$ over the presentation $\widetilde{\P}$ and that $v$ is a vertex in $\partial D_1 \cap \partial D_2$. It follows routinely from the structure of the relators in $\widetilde{\P}$ that if $v$ is Type A in the boundary of $D_1$ then it is also of Type A in the boundary of $D_2$ and vice versa. Thus, the notion of a type for all vertices of a diagram over $\widetilde{\P}$ is well defined. One may alternatively view a type A vertex as a sink vertex and view a type B vertex as a source vertex. Thus, every vertex in a diagram over  $\widetilde{\P}$ is either a source or a sink. Namely, the set of inner vertices of a diagram over $\widetilde{\P}$ split into two disjoint sets consisting of Type A/sink vertices and Type B/source vertices.

\begin{remark}\label{rem:threeLetImplNotRed}
We remark that by the construction of the relations in $\widetilde{\P}$, if two regions $D_1$ and $D_2$ are neighbors in a diagram $M$ and $\partial D_1 \cap \partial D_2$ contains three edges then $M$ is not reduced since, as shown in Lemma \ref{lem:letDetPieAndRelator}, three letters determine a relator of $\widetilde{\P}$ up to cyclic conjugation.
\end{remark}

Next, we show that in a minimal diagram over $\widetilde{\P}$ each inner region has four neighbors. Also, if $\P$ satisfies the algebraic $C(4) \& T(4)$ conditions and $M$ is a minimal diagram over the presentation $\widetilde{\P}$ then $M$ is a $C(4) \& T(4)$ diagram where edges are labelled by a generator.

\begin{lemma} \label{lem:singGenOrNotMin}
Let $M$ be a van Kampen diagram over $\widetilde{\P}$ with two regions and with connected interior. Then, one of the following holds:
\begin{enumerate}
	\item The inner path of $M$ is labelled by a generator.
	\item $M$ is not minimal.
\end{enumerate}

\end{lemma}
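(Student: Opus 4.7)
The plan is to show that $|\tau| > 1$ implies $M$ is not minimal, where $\tau$ denotes the inner path. By Observation~\ref{obs:lenFourCycRed} each region has boundary length four, and the two regions cannot share all four edges of a boundary in a simply connected planar diagram, so $|\tau|$ is $1$, $2$, or $3$. The case $|\tau|=1$ is precisely conclusion~(1); the case $|\tau|=3$ is handled directly by Remark~\ref{rem:threeLetImplNotRed}, which gives non-reducedness, hence non-minimality. The substantive work is the case $|\tau|=2$.

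I would use the standard decomposition of Notation~\ref{not:sidesMidSeg}, writing $\partial D_i = \mu_i\sigma_i^{-1}$ for $i=1,2$ and recording, for each region, which relators of $\P$ the two sides come from and which piece $P_i$ the region corresponds to. The two edges of $\tau$ are consecutive in $\partial D_1$, so either (a) they form a full side of $D_1$, meeting at a type-B vertex, or (b) they meet at an inner type-A vertex. Matching the labels along $\tau$ from both sides: in case (a) this forces $D_2$'s shared side to come from the same cyclic conjugate of the same relator as $D_1$'s shared side, so $P_1=P_2$; in case (b) it forces both regions to use the same pair of relators $R_1,R_2$ in opposite roles, with $P_1$ and $P_2$ sharing end positions in both relators.

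In each case I would then compute the label of $\partial(D_1\cup D_2)$ and show it is either already a relator of $\T$ or freely reduces to the empty word. If the two ``outer'' (non-shared) sides correspond to distinct cyclic conjugates of relators, their common prefix is a piece of $\P$ and the combined boundary is a $\T$-relator---for the same piece $P$ in case (a), and for a strictly shorter piece $Q$ in case (b) when $P_1\neq P_2$. In this situation $D_1\cup D_2$ can be replaced by a single region with the same boundary label, so $M$ is not minimal. If instead those two cyclic conjugates coincide---equivalently $P_1=P_2$ in case (b), or the analogous collision in case (a)---then a direct calculation gives $\partial D_2=(\partial D_1)^{-1}$ as cyclic words, and taking $\rho=\tau$ in the definition of reducedness shows that the label of $\mu\sigma$ freely reduces to $1$, so $M$ is not reduced.

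The main obstacle I anticipate is the index bookkeeping in case (b): one must check that matching just two consecutive letters across $\tau$ really pins down both relators \emph{and} the four indices appearing in the combined boundary label, and that the resulting label has exactly the format required to be the $\T$-relator corresponding to $Q$. The key inputs for this are the global well-definedness of vertex types in diagrams over $\widetilde{\P}$ and the length-$\leq 2$ piece bound of Lemma~\ref{lem:letDetPieAndRelator}, which together pin down the combinatorics enough to verify the matchings.
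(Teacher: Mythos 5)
Your proposal is correct and follows essentially the same route as the paper: reduce to $|\tau|\in\{1,2,3\}$, dispatch $|\tau|=3$ via Remark \ref{rem:threeLetImplNotRed}, and for $|\tau|=2$ split on the type (A or B) of the inner valence-two vertex and show that the two regions either merge into a single region of $\T$ or exhibit non-reducedness --- the paper relegates exactly this last step to ``routine to check'' with figures, which you spell out via label-matching. One small correction to your case (b): depending on whether the shared two-edge path occurs in $D_2$'s relator as its $e_j^{S}(e_\ell^{T})^{-1}$ straddle or as its $e_k^{T}(e_i^{S})^{-1}$ straddle (equivalently, whether $D_2$'s middle segment points into or out of the shared type-A vertex), the merged region corresponds either to the ``difference'' of the two pieces (your shorter $Q$) or to their concatenation $P_1P_2$, a strictly \emph{longer} piece --- but in both orientations the union's boundary is a relator of $\T$, as your plan requires.
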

\begin{proof}
\begin{figure}[ht]
\centering
\includegraphics[totalheight=0.13\textheight]{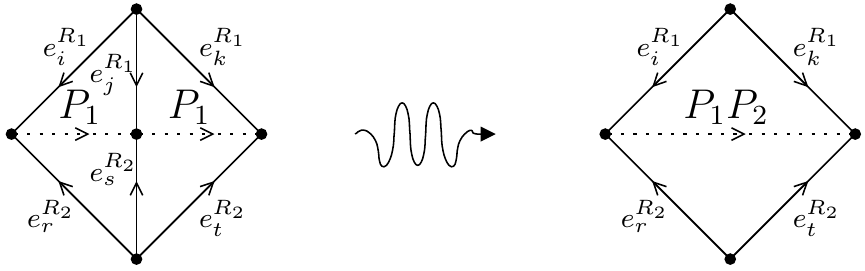}
\caption{Type A inner vertex of valence two}\label{fig:innVertValTwoOfTypeA}
\end{figure}

Suppose $D_1$ and $D_2$ are the two regions of $M$. Let $\mu\sigma^{-1}$ be the boundary cycle of $M$ and $\rho$ the inner path such that $\mu \rho^{-1}$ is a boundary cycle of $D_1$ and $\sigma \rho^{-1}$ is a boundary cycle of $D_2$. Assume that $\rho$ is labelled by $W$. We can assume that the diagram has a boundary of positive length and so we have that $|W| \in \Set{1,2,3}$. If $|W|=1$ then the first case of the lemma holds. If $|W|=3$ then it follows from  Remark \ref{rem:threeLetImplNotRed} that $M$ is not reduced and thus it is not minimal. So, let's assume that $|W|=2$ and that the diagram is reduced. In this case $M$ has one inner vertex $v$ of valance two which is a middle vertex of $\rho$. There are two cases to consider, the first is when $v$ is a Type A vertex and the second is when $v$ is a Type B vertex. These cases are illustrated in Figure \ref{fig:innVertValTwoOfTypeA} for Type A and in Figure \ref{fig:innVertValTwoOfTypeB} for Type B (the labels of the middle segments are noted). Having these figures in mind, it is routine to check that in all cases one may form a diagram with single region, as illustrated in the figures, such that its boundary label is the same as the boundary label of $M$ (this is also done in \cite{GS91}). Consequently, $M$ is not minimal.

\begin{figure}[ht]
\centering
\includegraphics[totalheight=0.13\textheight]{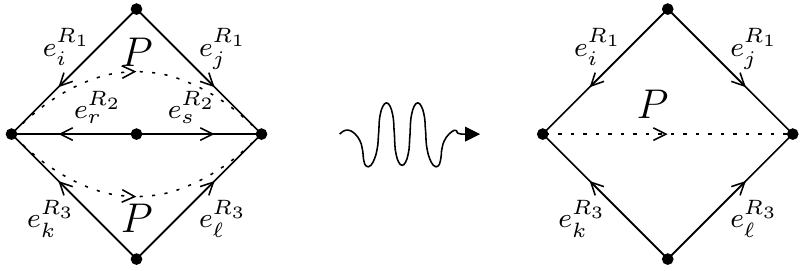}
\caption{Type B inner vertex of valence two}\label{fig:innVertValTwoOfTypeB}
\end{figure}
\end{proof}

\begin{lemma} \label{lem:C4T4NoInnerVertOfValThree}
Suppose that $\P$ satisfies the algebraic $C(4) \& T(4)$ conditions. Let $M$ be a van Kampen diagram over $\widetilde{\P}$ which contain an inner vertex $v$ of valance three. Then, $M$ is not minimal.
\end{lemma}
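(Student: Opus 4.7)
The plan is to argue by the type of $v$. Let $v$ be an inner vertex of valence three, with the three regions meeting at $v$ labelled $D_1, D_2, D_3$ and the three incident edges labelled $f_{12}, f_{23}, f_{31}$, where $f_{ij} \subset \partial D_i \cap \partial D_j$. Recall from the discussion following Notation \ref{not:sidesMidSeg} that every vertex of a $\widetilde{\P}$-diagram is well-defined as Type A or Type B, so the argument splits cleanly into two cases.

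\textbf{Case 1: $v$ is of Type B.} At a Type B corner of a region, the two incident edges both have labels in $E$ sharing a common superscript, i.e.\ they both come from a single $\P$-relator $R$. Applied to each $D_i$ in turn, this forces the three edges $f_{12}, f_{23}, f_{31}$ at $v$ to all have labels of the form $e_{n}^{R}$ for one and the same relator $R \in \RR$. My plan is then to enumerate the possible configurations of the indices $i, j, k, \ell$ appearing in the boundary labels of $D_1, D_2, D_3$ (in analogy with the case analysis used for Type B valence-two vertices in Lemma \ref{lem:singGenOrNotMin}, illustrated in Figure \ref{fig:innVertValTwoOfTypeB}). In each such configuration, the three corners at $v$ from a single relator $R$ give enough index coincidences to allow replacing the three regions $D_1, D_2, D_3$ by at most two regions with the same outer boundary label, exactly as in Remark \ref{rem:twoTypesOfNonMin}. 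This shows $M$ is non-minimal, and in this case the algebraic $C(4) \& T(4)$ hypothesis on $\P$ is not really needed --- only the structure of $\widetilde{\P}$-relators matters.

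\textbf{Case 2: $v$ is of Type A.} Now each $D_i$ has its Type A corner at $v$, hence the auxiliary middle segment $\rho_i$ of $D_i$ (Notation \ref{not:sidesMidSeg}) emanates from $v$ into the interior of $D_i$ with label a piece $P_i \in \ws{X}$ of $\P$. The plan is to use these three middle segments together with the rewrite function $\phi$ to build an auxiliary reduced $\P$-diagram $N$ local to $v$, whose three regions are labelled by three elements $Q_1, Q_2, Q_3$ of the symmetric closure of $\RR$, one per $D_i$, meeting at an inner vertex corresponding to $v$. By construction, consecutive $Q_i$ and $Q_{i+1}$ share a non-trivial overlap --- namely the piece read off along the edges and middle segments at $v$. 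Consequently each of the three pairwise products $Q_1 Q_2$, $Q_2 Q_3$, $Q_3 Q_1$ fails to be freely reduced as written after appropriate cyclic conjugation, contradicting the $T(4)$ condition of $\P$. Hence no such $v$ can appear in a minimal diagram over $\widetilde{\P}$.

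The main obstacle will be the Type A case: turning the geometric picture around $v$ into an honest $\P$-diagram requires a careful bookkeeping of which half of each $\widetilde{\P}$-relator in $\partial D_i$ corresponds to the middle segment $\rho_i$, and then verifying that the three pieces detected at $v$ really produce three pairwise cancellations in the $\P$-sense, so that $T(4)$ can be invoked. The Type B case, by contrast, is essentially combinatorial, parallel to the enumeration already carried out in Lemma \ref{lem:singGenOrNotMin}.
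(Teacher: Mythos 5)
Your overall architecture --- splitting on the type of $v$, which is well defined by the discussion after Notation \ref{not:sidesMidSeg} --- is the same as the paper's, and your Type~A argument is essentially the paper's: the three middle segments directed outward from $v$ carry pieces $P_1,P_2,P_3$, the three regions yield relators of the form $P_1W_1P_2^{-1}$, $P_2W_2P_3^{-1}$, $P_3W_3P_1^{-1}$ in the symmetric closure of $\RR$, and all three pairwise products fail to be freely reduced, violating $T(4)$. (Two small points you should fold in: the paper first invokes Lemma \ref{lem:singGenOrNotMin} to assume the inner edges around $v$ are single generator-labelled edges, and assumes $M$ is reduced so that common boundary paths really do read pieces of $\P$.)

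The genuine gap is in your Type~B case, where you have the roles of the hypotheses backwards. You correctly observe that all three edges at a Type~B vertex carry labels $e_{\ell_1}^{R},e_{\ell_2}^{R},e_{\ell_3}^{R}$ with a common superscript $R$, but the claim that this alone ``gives enough index coincidences'' to merge $D_1,D_2,D_3$ into two regions, and that the algebraic $C(4)\,\&\,T(4)$ hypothesis is not needed, does not hold up. The two replacement regions must be labelled by genuine relators of $\widetilde{\P}$, i.e.\ they must pair an arc of (say) $R_1$ with an arc of $R_3$ along a common \emph{piece} of $\P$; the original configuration only exhibits pieces shared between $R_i$ and $R$, not between $R_1,R_2,R_3$ among themselves. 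The paper obtains the missing piece by noting that the three middle segments form a closed triangle around $v$, so $P_1P_2P_3=1$ in $G$, and then invoking the $C(4)$ condition (via Greendlinger's lemma: a freely reduced product of fewer than four pieces is never trivial in $G$) to upgrade this to the word-level identity $P_1P_2=P_3^{-1}$; only then can the arc of $R_3$ be split at an intermediate index $t$ to produce the two new relators. This is materially different from the valence-two enumeration of Lemma \ref{lem:singGenOrNotMin}, which really is purely combinatorial. Note also that Remark \ref{rem:formingInnerVertexOfValance3} shows Type~B valence-three vertices do occur in reduced diagrams over $\widetilde{\P}$, so this case cannot be disposed of by showing the configuration is impossible; the explicit replacement, and hence the small-cancellation input, is unavoidable.
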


\begin{figure}[ht]
\centering
\includegraphics[totalheight=0.20\textheight]{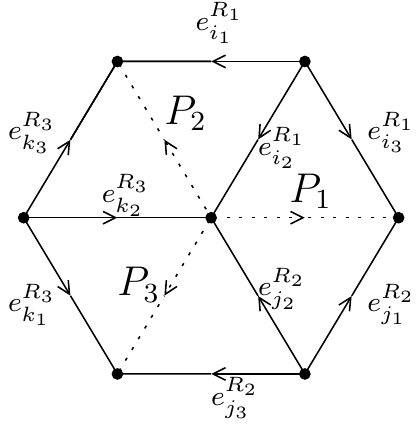}
\caption{Type A inner vertex of valence three}\label{fig:innVertValThreeOfTypeA}
\end{figure}

\begin{proof}
If $M$ is not reduced the it is not minimal. Thus, we will assume that $M$ is reduced. Let $v$ be an inner vertex of valance three and let $D_1$, $D_2$, and $D_3$ be the three regions which contain $v$ in their boundaries. Let $N$ be the sub-map of $M$ which contain the three regions. Using Lemma \ref{lem:singGenOrNotMin} we can assume that the inner edges of $N$ are labelled by a generator (because otherwise by the lemma the sub-diagram $N$ is not minimal and thus the diagram $M$ is not minimal). There are two cases to consider depending on $v$ being a vertex of Type A (Figure \ref{fig:innVertValThreeOfTypeA}) or a vertex of Type B (Figure \ref{fig:innVertValThreeOfTypeB}). 

We start from the case that $v$ is a Type A vertex. Assume that the three regions are labelled by the relators
\begin{description}
	\item[$\quad$] $(R_{j_2}^{R_2})^{-1} R_{j_1}^{R_2} (R_{i_3}^{R_1})^{-1} R_{i_2}^{R_1}$,
	\item[$\quad$] $(R_{i_2}^{R_1})^{-1} R_{i_1}^{R_1} (R_{k_3}^{R_3})^{-1} R_{k_2}^{R_3}$, and
	\item[$\quad$] $(R_{k_2}^{R_3})^{-1} R_{k_1}^{R_3} (R_{j_3}^{R_2})^{-1} R_{j_2}^{R_2}$
\end{description}
which correspond to the pieces $P_1$, $P_2$, and $P_3$, respectively. We also assume that the above pieces are the labels of the middle segments (for convenience we regard these segments as directed outward from $v$). We get that
\begin{description}
	\item[$\quad$] $\phi\left((e_{i_3}^{R_1})^{-1} e_{i_1}^{R_1}\right) = P_1^{-1} P_2\quad$ or $\quad\phi\left((e_{i_1}^{R_1})^{-1} e_{i_3}^{R_1}\right) = P_2^{-1} P_1$;
	\item[$\quad$] $\phi\left((e_{k_3}^{R_3})^{-1} e_{k_1}^{R_3}\right) = P_2^{-1} P_3\quad$ or $\quad\phi\left((e_{k_1}^{R_3})^{-1} e_{k_3}^{R_3}\right) = P_3^{-1} P_2$;
	\item[$\quad$] $\phi\left((e_{j_3}^{R_2})^{-1} e_{j_1}^{R_2}\right) = P_3^{-1} P_1\quad$ or $\quad\phi\left((e_{j_1}^{R_2})^{-1} e_{j_3}^{R_2}\right) = P_1^{-1} P_3$
\end{description}
Thus, $P_1^{-1} P_2$, $P_2^{-1} P_3$, and $P_3^{-1} P_1$ are sub-words of a cyclic conjugates of $R_1$, $R_2$, and $R_2$, respectively, or their inverses. Since $\RR$ is symmetrically closed, we have relators in $\RR$ of the form $P_1 W_1 P_2^{-1}$, $P_2 W_2 P_3^{-1}$, and $P_3 W_3 P_1^{-1}$. This violate the $T(4)$ condition. Consequently, there are no inner vertices of valance three in $M$ which are Type A vertices. 

\begin{figure}[ht]
\centering
\includegraphics[totalheight=0.20\textheight]{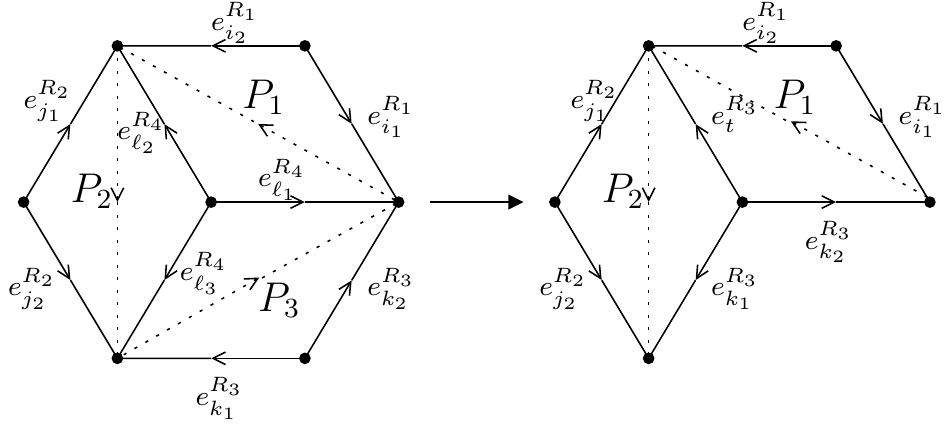}
\caption{Type B inner vertex of valence three}\label{fig:innVertValThreeOfTypeB}
\end{figure}

Suppose now that $v$ is a Type B vertex. Assume that the three regions are labelled by the relators
\begin{description}
	\item[$\quad$] $(R_{i_1}^{R_1})^{-1} R_{i_2}^{R_1} (R_{\ell_2}^{R_4})^{-1} R_{\ell_1}^{R_4}$,
	\item[$\quad$] $(R_{j_1}^{R_2})^{-1} R_{j_2}^{R_2} (R_{\ell_3}^{R_4})^{-1} R_{\ell_2}^{R_4}$, and
	\item[$\quad$] $(R_{k_1}^{R_3})^{-1} R_{k_2}^{R_3} (R_{\ell_1}^{R_4})^{-1} R_{\ell_3}^{R_4}$
\end{description}
which correspond to the pieces $P_1$, $P_2$, and $P_3$, respectively. We also assume that the above pieces are the labels of the middle segments and that these segments form a closed cycle of length three (for convenience, we regard them as directed counter-clockwise around $v$). Finally, we assume that the edges emanating from $v$ are labelled by $R_{\ell_1}^{R_4}$, $R_{\ell_2}^{R_4}$, and $R_{\ell_3}^{R_4}$ (in counter-clockwise order). First, notice that since $(R_{\ell_1}^{R_4})^{-1}R_{\ell_2}^{R_4} (R_{\ell_2}^{R_4})^{-1}R_{\ell_3}^{R_4} (R_{\ell_3}^{R_4})^{-1}R_{\ell_1}^{R_4}$ is freely equal to $1$ we get that 
\[
P_1 P_2 P_3 = \phi\left((R_{\ell_1}^{R_4})^{-1}R_{\ell_2}^{R_4} (R_{\ell_2}^{R_4})^{-1}R_{\ell_3}^{R_4} (R_{\ell_3}^{R_4})^{-1}R_{\ell_1}^{R_4}\right) = 1 \quad \text{in $G$}
\]
Now, it is a well known fact (e.g., by Greendlinger's Lemma) that in a presentation which satisfies the algebraic $C(4)\& T(4)$  condition a freely-reduced product of less than four pieces is never equal to $1$ (since there is no van Kampen diagram with boundary cycle of length three). Thus, by possibly re-ordering the indices, we get that $P_1 P_2 = P_3^{-1}$. Hence, $\phi((E_{k_2}^{R_3})^{-1}E_{k_1}^{R_3}) = P_1 P_2$. Let $t$ be an index such that $\phi((E_{k_2}^{R_3})^{-1}E_{t}^{R_3}) = P_1$ and $\phi((E_{t}^{R_3})^{-1}E_{k_1}^{R_3}) = P_2$. Then, we can replace the diagram $N$ with a new diagram $N'$ which contain two regions labelled by
\begin{description}
	\item[$\quad$] $(R_{i_1}^{R_1})^{-1} R_{i_2}^{R_1} (R_{t}^{R_3})^{-1} R_{k_2}^{R_3}$ and
	\item[$\quad$] $(R_{j_1}^{R_2})^{-1} R_{j_2}^{R_2} (R_{k_1}^{R_3})^{-1} R_{t}^{R_3}$
\end{description}
and which has the same boundary label as $N$; see the right side of Figure \ref{fig:innVertValThreeOfTypeB}. This shows that $N$ is not minimal and thus the diagram $M$ is not minimal.
\end{proof}

\begin{remark} \label{rem:formingInnerVertexOfValance3}
If follows from the proof of Lemma \ref{lem:C4T4NoInnerVertOfValThree} that it is possible to construct a diagram which contains a vertex of valance three (see Figure \ref{fig:innVertValThreeOfTypeB}). This happen if there is a piece $P \in \ws{X}$ of the presentation $\P$ with $|P|\geq2$ and such that there are at least \emph{four} different relators in $\RR$ that contain $P$ as a prefix. In this case, the presentation $\widetilde{\P}$ \emph{does not} satisfy the algebraic $C(4)\& T(4)$ conditions (under which, in \emph{every} van Kampen diagram there are no vertices of valence three). As a concrete example one may consider the following presentation which satisfies the algebraic $C(4)\& T(4)$ condition:
\[
\GPres{a,b|ababab^{-1}a^{-1}b^{-1}a^{-1}b^{-1} = 1}
\]
The symmetric closure of $\Set{ababab^{-1}a^{-1}b^{-1}a^{-1}b^{-1}}$ contains $20$ elements where $ab$ is a prefix of four of them.
\end{remark}

Next theorem summarize the properties of the presentation $\widetilde{\P}$.

\begin{theorem} \label{thm:propOfPTilda}
Let $\P = \GPres{X|\RR}$ be a presentation of a group $G$ which satisfies the algebraic $C(4)\& T(4)$ conditions. Suppose that the set of relations $\RR$ is symmetrically closed and that each generator in $X$ is a piece. Let $\widetilde{\P}$ be the presentation we get by applying the BSD procedure to the presentation $\P$. Then, the following hold:
\begin{enumerate}
  \item $\widetilde{\P}$ presents the group $G * F$ where $F$ is a free group of rank $|\RR|$.
	\item All relators of $\widetilde{\P}$ are of length four and cyclically reduced.
	\item If $M$ is a minimal van Kampen diagram over the presentation $\widetilde{\P}$ then:
	\begin{enumerate}
		\item $M$ is a $C(4) \& T(4)$ diagram.
		\item If $e$ is an inner edge of $M$ then $e$ is labelled by a generator.
	\end{enumerate}
\end{enumerate}
In other words, the presentation $\widetilde{\P}$ is a presentation of the group $G * F$ for which the condition of Theorem \ref{thm:mainThmC4T4} hold.
\end{theorem}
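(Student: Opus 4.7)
Parts 1 and 2 are immediate consequences of Lemma \ref{lem:tietzeTrans} and Observation \ref{obs:lenFourCycRed} respectively, so the substance lies in Part 3. Fix a minimal van Kampen diagram $M$ over $\widetilde{\P}$; I will use throughout that every sub-diagram of $M$ is itself minimal (otherwise one could replace it by a smaller sub-diagram and contradict the minimality of $M$).

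The $T(4)$ half of Part 3(a) --- no inner vertex of valence three --- is immediate from Lemma \ref{lem:C4T4NoInnerVertOfValThree} applied to $M$, invoking the hypothesis that $\P$ satisfies the algebraic $C(4) \& T(4)$ conditions. For the inner-edge labelling in Part 3(b), I would fix an inner edge $e$ and let $D_1, D_2$ be the two regions whose shared boundary contains $e$. The sub-diagram $N = D_1 \cup D_2$ has two regions and connected interior, and is minimal as a sub-diagram of $M$. Lemma \ref{lem:singGenOrNotMin} then asserts that the inner path of $N$ is labelled by a single generator; since every edge carries a non-identity label, this inner path must consist of a single edge, necessarily $e$, so $e$ is labelled by a generator. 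The same argument also shows that any two neighboring regions of $M$ share at most one edge, since a shared path of length $\geq 2$ would furnish a minimal 2-region sub-diagram with inner path of length $\geq 2$, contradicting Lemma \ref{lem:singGenOrNotMin}.

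For the $C(4)$ half of Part 3(a), let $D$ be an inner region. By Observation \ref{obs:lenFourCycRed} the boundary $\partial D$ has four edges, all inner. Suppose two of those edges lay on the boundary of a single neighbor $E$; by the preceding paragraph they cannot form a length-2 shared path, so another region $F$ must share an edge of $\partial D$ lying between them. The vertex common to the $D$-$E$ edge and the adjacent $D$-$F$ edge would then be an inner vertex of valence three (one edge on $\partial D \cap \partial E$, one on $\partial D \cap \partial F$, and one on $\partial E \cap \partial F$), contradicting the $T(4)$ property already established. Hence the four edges of $\partial D$ correspond to four pairwise distinct neighbors, giving the $C(4)$ condition. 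The most delicate point in this plan is exactly this $C(4)$ verification, where I must combine the single-generator labelling of inner edges with the valence-three obstruction via this planarity argument; the remaining items are direct applications of previously established results in this section.
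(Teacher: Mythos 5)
Your treatment of Parts 1, 2, 3(b) and the valence-three half of 3(a) coincides with the paper's own proof, which cites exactly Lemma \ref{lem:tietzeTrans}, Observation \ref{obs:lenFourCycRed}, Lemma \ref{lem:C4T4NoInnerVertOfValThree} and Lemma \ref{lem:singGenOrNotMin}. The one place where you go beyond the paper is the $C(4)$ half of 3(a): the paper simply asserts that since every inner edge is labelled by a generator and every relator has length four, each inner region has four neighbours, whereas you try to justify the implicit step from ``four edges'' to ``four \emph{distinct} neighbours''. That is the right instinct, but your argument for it has a gap. Suppose $e_1,e_3\in\partial D\cap\partial E$ are non-adjacent and $e_2\in\partial D\cap\partial F$ lies between them. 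The vertex $v=t(e_1)=i(e_2)$ is indeed an inner vertex (it is enclosed by a simple closed curve running through the interiors of $D$, $E$, $e_1$ and $e_3$), but nothing forces its valence to be three: besides $e_1$ and $e_2$ there may be several further edges at $v$, with other regions wedged between $E$ and $F$ in the cyclic order around $v$, in which case no edge of $\partial E\cap\partial F$ need be incident to $v$ and Lemma \ref{lem:C4T4NoInnerVertOfValThree} yields no contradiction.

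The configuration is still impossible, but for a different reason, namely planarity and simple connectivity rather than the valence-three condition. If $D$ and $E$ meet along two non-adjacent edges of $\partial D$, then $\overline{D}\cup\overline{E}$ is not simply connected, so it encloses a non-empty simply connected subdiagram $K$ whose boundary consists of one edge of $\partial D$ (labelled by a generator) together with the portion of $\partial E$ on the enclosed side, whose label has length at most two. Thus $K$ is a diagram with at least one region (the region $F$, which is distinct from $D$ and $E$ since otherwise three edges are shared and Remark \ref{rem:threeLetImplNotRed} applies) and boundary label of length at most three. This is impossible over $\widetilde{\P}$: a single region needs boundary length four by Observation \ref{obs:lenFourCycRed}, and a diagram with more than one region needs a strictly longer boundary --- the standard small-cancellation fact invoked in the remark following Lemma \ref{lem:stableDom} (see \cite{LS77}), or an induction on the number of regions. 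Some such argument (or the appeal to \cite{Juh89} made after Definition \ref{def:C4T4Diag}) is what actually licenses identifying the number of edges of $\partial D$ with the number of neighbours of $D$; you should replace the valence-three step with it. Everything else in your proposal is sound and matches the paper.
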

\begin{proof}
Property 1 follows from Lemma \ref{lem:tietzeTrans}. Property 2 follows from Observation \ref{obs:lenFourCycRed}. Property 3(a) follows from Lemma \ref{lem:C4T4NoInnerVertOfValThree} (the $T(4)$ condition) and Lemma \ref{lem:singGenOrNotMin} (the $C(4)$ condition which hold since each edge is labelled by a generator and each relation is of length four so each inner region has four neighbors). Finally, Property 3(b) follows also from Lemma \ref{lem:singGenOrNotMin}.
\end{proof}

\section{Algebraic Condition Imply Bi-Automaticity \label{sec:application}}

In this short section we prove Theorem \ref{thm:mainApplicationC4T4}. Let $G$ be a group which has a presentation $\P$ for which the algebraic $C(4) \& T(4)$ condition hold and the set of relations is symmetrically closed (this may always be assumed). The first step is to apply the BSD procedure. 

We note first that one may assume that each generator is a piece since otherwise we can write $G$ as $G = G' * F$ where:
\begin{enumerate}[(a)]
	\item $G'$ has a presentation where the algebraic $C(4) \& T(4)$ conditions hold and every generator is a piece; and,
	\item $F$ is a free group of finite rank.
\end{enumerate}
The free product $H_1*H_2$ is bi-automatic if and only if both $H_1$ and $H_2$ are bi-automatic \cite[Cor. 4.6]{GS91-2}. Thus, since a free group is bi-automatic, to show that $G$ is bi-automatic it is enough to show that a $G'$ is bi-automatic. Consequently, we can assume that in the presentation $\P$ each generator is a piece.

Let $\widetilde{\P}$ the presentation resulting from applying the BSD procedure to the presentation $\P$. By Theorem \ref{thm:propOfPTilda} the conditions of Theorem \ref{thm:mainThmC4T4} hold and consequently the group presented by $\widetilde{\P}$ is bi-automatic. Now, the group presented by $\widetilde{\P}$ is the group $G*F$ where $F$ is a free group of finite rank. As above, since $G*F$ is bi-automatic also $G$ is bi-automatic and the proof is completed.

\section{Appendix}

In this appendix we discuss the problems in the automaticity proof of groups having a presentation $\GPres{X|\RR}$ which satisfies the algebraic $C(4) \& T(4)$ small cancellation conditions due to S. Gersten and H. Short \cite{GS91}. The proof spans over pg 650--655 of \cite{GS91}. It starts by recalling some basic definition (pg 650). Then, the idea of barycentric subdivision is introduced (top of pg 651) and it is noticed that the construction produces a presentation of the free product $G*F$ where $F$ is a free group (Lemma 5.1).

The construction of the barycentric subdivision is a bit different from the construction we describe in Section \ref{sec:barySubDiv}. We outline the differences. In their construction a new presentation is constructed in which the relations have the form $(e_i^R)^{-1}e_j^R = P$ where $P \in \ws{X}$ is a piece (the generators of the presentation are the original generators plus the new generators of the form $e_i^R$). To handle the new relations the idea of ``admissible diagrams'' is introduced (the first definition at pg 652). There is one to one correspondence between admissible diagrams and the diagrams we describe in Section \ref{sec:barySubDiv}. If one starts from a diagram of the presentation described in Section \ref{sec:barySubDiv} and add all the middle segments (Notation \ref{not:sidesMidSeg}) the result is an admissible diagram over the presentation in \cite{GS91}. On the other hand, is one starts from an admissible diagram and remove all the paths labelled by pieces in $\ws{X}$ the result is a diagram over the presentation described in Section \ref{sec:barySubDiv}. Thus, considering only admissible diagrams essentially means that they concentrate on the same diagrams which we describe in Section \ref{sec:barySubDiv}.

Right after the idea of admissible diagram, the definition of ``allowed word'' is introduced (second definition in pg 652). The lemma that follows (Lemma 5.2) states that allowed words which are equal to $1$ in the group can be read on the boundary cycle of an admissible diagram which its interior vertices have non-positive curvature. The proof of the lemma is in fact a proof that in a minimal admissible diagrams all inner vertices have non-positive curvature. Specifically, the `allowed word' assumption is never used (this is stated at the last paragraph of the proof). Notice that although the proof refers to ``reduced diagrams'' (as assumed in the second paragraph of pg 652) it actually assumes that the diagrams are \emph{minimal}. We also give a proof of non-positive curvature of inner vertices at Section \ref{sec:barySubDiv}. Our arguments are more combinatorial and they refer to a slightly different presentation (as described in previous paragraph) but nevertheless they are essentially the same.

The bulk of the proof is outlined at the end of pg 655. The automatic structure consists of ``allowed representatives''. These are elements which are \emph{geodesics} and for which the so-called ``$B_2$ turning angle condition'' holds (see the first paragraph after the end of the proof at pg 655). It is written that every element of the group has an allowed representative where the proof is ``As in the $B_2$ case'' which is done in Proposition 2.1 (pg 645). The problem in this statement can most clearly seen from the last paragraph of pg 645:
\begin{quote}
\emph{``Notice that if the subword $a_{i-1}a_i$ occurs in another reduced disc diagram, then the turning angle between $a_{i-1}$ and $a_i$ cannot be $\pi/2$. If this were the case, the the non-positive curvature condition would be violated.''}
\end{quote}
The statement is indeed correct under the conditions assumed in Proposition 2.1. It is also crucial to the proof if Proposition 2.1 (because, if a word is included in one diagram and is fixed to make sure the ``$B_2$ turning angle condition'' is maintained then the fix hold for \emph{any} diagram that include the word). However, the same statement it is no longer true in the context of the proof for algebraic $C(4) \& T(4)$ presentations. In Remark \ref{rem:formingInnerVertexOfValance3} we discuss how one may form an inner vertex of valence three. The example there shows that it is possible to construct two minimal diagrams $M_1$ and $M_2$, which their boundary label contains a word $a_{i-1}a_i$ where:
\begin{enumerate}[i)]
	\item in $M_1$ the angle along $a_{i-1}a_i$ is $\pi/2$;
	\item in $M_2$ the angle along $a_{i-1}a_i$ is zero.
\end{enumerate}
(angles as defined in \cite{GS90,GS91}). Consequently, the proof that all elements of the group have an allowed representative is not complete. To be more clear, the proof of the $B_2$ case, which is given in Proposition 2.1, uses the fact that a boundary vertex of valence three cannot become an inner vertex by gluing some region over it. However, in the proof given in pg 655 a boundary vertex of valence three can indeed become an inner vertex by gluing some region over it. In other words, the the definition of ``allowed representatives'' uses a condition which should hold for \emph{every} admissible diagram which includes the representative by referring to the angles along the representative. It follows that vertices of valence three along a possible representative in one diagram which includes it may become vertices of valence two when the representative is included in a different diagram. Thus, one cannot be sure that ``allowed representatives'' actually exist as the ``$B_2$ turning angle condition'' may never holds.

We would like to point out another small problem. It is indicated that, since there are only finitely many configurations which are used to define the allowed representatives, the language of allowed representatives is regular (forth paragraph of pg 655). However, the assumption that allowed representatives are geodesics is problematic in this regard since one cannot readily verify that a given word is a geodesic. Nevertheless, it is not clear how the assumption that all allowed representatives are geodesic is used (it is possible that it may be dropped).


\end{document}